\title{Asymptotic Analysis for Randomly Forced MHD}
\author{J. F\"oldes, S. Friedlander, N. Glatt-Holtz, G. Richards\\
  \scriptsize{emails: juraj.foldes@ulb.ac.be, susanfri@usc.edu, negh@math.vt.edu, g.richards@rochester.edu}}
\date{}
\definecolor{Red}{rgb}{0.7,0,0.1}
\definecolor{Green}{rgb}{0,0.7,0}
\providecommand{\bysame}{\leavevmode\hbox to3em{\hrulefill}\thinspace}
\providecommand{\MR}{\relax\ifhmode\unskip\space\fi MR }
\providecommand{\href}[2]{#2}
\numberwithin{equation}{section}
\newtheorem{Thm}{Theorem}[section]
\newtheorem{Lem}[Thm]{Lemma}
\newtheorem{Prop}[Thm]{Proposition}
\newtheorem{Cor}[Thm]{Corollary}
\newtheorem{Rmk}[Thm]{Remark}
\newtheorem*{Thm*}{Theorem}
\newcommand{\pd}[1]{\partial_{#1}}
\newcommand{\E}{\mathbb{E}}
\newcommand{\Prb}{\mathbb{P}}
\newcommand{\eps}{\varepsilon}
\newcommand{\del}{\delta}
\newcommand{\RR}{\mathbb{R}}
\newcommand{\TT}{\mathbb{T}}
\newcommand{\ZZ}{\mathbb{Z}}
\newcommand{\U}{U}
\newcommand{\B}{B}
\newcommand{\pr}{P}
\newcommand{\Th}{\Theta}
\newcommand{\bfe}{\hat{e}}
\newcommand{\OA}{A}
\newcommand{\OQ}{Q}
\newcommand{\bl}{b}
\newcommand{\ul}{u}
\newcommand{\prl}{p}
\newcommand{\thl}{\theta}
\newcommand{\uc}{u_\eps}
\newcommand{\bc}{b_\eps}
\newcommand{\thc}{\thl_\eps}
\newcommand{\gamp}{\gamma'}
\newcommand{\mcx}{\mathcal{X}}
\newcommand{\mct}{\mathcal{T}}
\newcommand{\ucc}{u_{\eps,\delta}}
\newcommand{\bcc}{b_{\eps, \delta}}
\newcommand{\thcc}{\thl_{\eps, \delta}}
\newcommand{\BBB}{\hat{B}_0}
\newcommand{\MMM}{\mathcal{M}}
\newcommand{\WW}{\mathfrak{W}}
\newcommand{\Omm}{\hat{\Omega}}
\newcommand{\R}{\Pi}
\begin{document}
\markboth{J. F\"oldes, S. Friedlander, N. Glatt-Holtz, G. Richards}
{Asymptotic Analysis for Randomly Forced MHD}

\maketitle

\begin{abstract}
  We consider the three-dimensional magnetohydrodynamics (MHD)
  equations in the presence of a spatially degenerate stochastic forcing
  as a model for magnetostrophic turbulence in the Earth's fluid core.  We examine
  the multi-parameter singular limit of vanishing Rossby number $\eps$ and magnetic Reynold's number $\delta$, and establish
  that: (i) the limiting stochastically driven active scalar equation
  (with $\eps=\delta=0$) possesses a unique ergodic invariant measure, and (ii) any suitable sequence of statistically invariant
  states of the full MHD system converge weakly, as $\eps,\delta \rightarrow 0$, to the unique invariant measure
  of the limit equation.  This latter convergence result does not require any conditions on the relative rates at which $\eps, \delta$ decay.

  Our analysis of the limit equation relies on a recently developed theory of hypo-ellipticity for infinite-dimensional stochastic dynamical systems.  
  We carry out a detailed study of the interactions between the nonlinear and stochastic terms to demonstrate that a H\"{o}rmander
  bracket condition is satisfied, which yields a contraction property for the limit equation in a suitable Wasserstein metric.  
  This contraction property reduces the convergence of invariant states in the multi-parameter limit to the convergence of solutions at finite times.  However, in view of the phase space mismatch between the small parameter system and the limit equation, and due to the multi-parameter
  nature of the problem, further analysis is required to establish the singular limit.  In particular, we develop methods to lift the contraction for the limit equation to the extended phase space, including the velocity and magnetic fields.  Moreover, for the convergence of solutions at finite times we make use of a probabilistic modification of the Gr\"onwall inequality, relying on a delicate stopping time argument.
\end{abstract}

  {\noindent \small
  {\it {\bf Keywords:}
  	Ergodic Theory of Stochastic Partial Differential Equations (SPDEs), Magnetohydrodynamics (MHD), Singular Perturbation Theory,
	Wasserstein Distance, Degenerate Stochastic Forcing, Hypo-ellipticity} \\
  {\it {\bf MSC 2010 Classifications:} 35Q86, 35R60, 35B25, 60H15}  }

\setcounter{tocdepth}{1}
\tableofcontents

\newpage

\section{Introduction}

It has long been appreciated that invariant measures of the equations of fluid dynamics provide a
mathematically consistent framework for studying robust statistical quantities in turbulent flows.
 An ongoing challenge is therefore to investigate properties of
these states such as the existence,
uniqueness, ergodicity, and dependence on parameters in a variety of specific contexts.
While one may certainly pose these questions for deterministic equations (cf. \cite{FoiasManleyRosaTemam01}),
the stochastic setting, apart from its physical relevance, can be more accessible due to the regularizing effect of noise on the associated probability
distribution functions.

In this article we will analyze a stochastically forced system of equations from magnetohydrodynamics
that was presented by Moffat and Loper \cite{moffattloper1994} as a model for magnetostrophic turbulence in the
Earth's fluid core.
The stochastic forcing terms are to be interpreted as a source of heat which  ``continuously regenerates the
statistically stationary temperature distribution throughout the core"; cf. \cite{moffatt2008}.
Our analysis will focus on the convergence of such statistically stationary states in a certain singular parameter limit suggested by \cite{moffattloper1994} and which bears some formal similarities to a large Prandtl limit considered in our recent work \cite{FoldesGlattHoltzRichards2015}.
See also \cite{Wang2004a,Wang2005,Wang2007,Wang2008b, Parshad2010} for other formally analogous limits considered within a deterministic framework.

\subsection*{The Stochastic Magnetohydrodynamics (MHD) Equations}

The three-dimensional MHD equations govern the motion of a rotating, density stratified, electrically conducting fluid under the
Boussinesq approximation. See e.g. \cite{Moffatt1978, Davidson2001}. In a rotating frame of reference these equations are written in terms of dimensionless variables as
\begin{align}
  \eps( \partial_t \U + \U \cdot \nabla \U) + \Omm \times \U
 	&= - \nabla \pr + \BBB \cdot \nabla \B + \delta \B \cdot \nabla \B -  \Th \hat{g} + \nu \Delta \U, \quad \nabla \cdot \U = 0,
	\label{eq:MHD:vel}\\
 \delta( \partial_t \B + \U \cdot \nabla \B - \B \cdot \nabla \U )
 	&= \BBB \cdot \nabla \U + \Delta \B, \quad \nabla \cdot \B = 0,
	\label{eq:MHD:mag}\\
 d \Th + \U \cdot \nabla \Th  dt
	&= \kappa \Delta \Th \, dt + \sigma dW.
	\label{eq:MHD:temp}
\end{align}
The unknowns are $\U=\U(x,t)$ the velocity,
$\B=\B(x,t)$ the magnetic field (both vector-valued) and $\Th=\Th(x,t)$ the temperature (a scalar) of the fluid.
Here $\Omm$ denotes a unit vector along the axis of rotation
of a sphere, $\hat{g}$ is a unit vector in the local direction of gravity, which
points radially inwards to the sphere, and $\BBB$ denotes a unit vector in the direction of a constant
``applied'' underlying magnetic field.
We will work in Cartesian coordinates centered on a local tangent
plane to the sphere at a co-latitude angle $\lambda$.  We choose the Cartesian frame $\{\bfe_1,\bfe_2,\bfe_3\}$ such that $\hat{g} = -\bfe_3$
and $\Omm = \cos \lambda \bfe_{3} - \sin \lambda \bfe_2$.

The physical forces governing
this system are the Coriolis force, Lorentz force, and gravity acting via buoyancy, while the equation for the temperature
$\Th$ is driven by a white-in-time, spatially correlated, Gaussian noise $\sigma dW$.  Specifically we consider
  \begin{align}
     \sigma dW = \sum_{\substack{k \in \ZZ^3_0 \\ m \in \{0, 1\}}} \alpha_{k, m} \sigma_k^m dW^{k, m} \,,
     \label{eq:n:s}
  \end{align}
  where $\{\sigma_k^m\}$ is a basis of $L^2(\mathbb{T}^3)$ consisting of eigenfunctions of Laplacian on the cubic torus, namely
$\sigma_k^0 (x) := \cos(k\cdot x)$, $\sigma_k^1 (x) := \sin(k\cdot x)$, $\alpha_k \in \RR$ are amplitudes,
and $\{W^{k, m}\}$ is a collection of independent standard, 1D Brownian motions.  We are
  mainly interested in the situation when \eqref{eq:n:s} is `degenerate' in the sense that only a few selected amplitudes $\alpha_k$
  are non-zero.  Nevertheless many of the results established below remain true (and in some cases are easier to prove)
  if infinitely many frequencies are activated ($\alpha_k \not = 0$) so long as the amplitudes decay sufficiently rapidly.

The non-dimensional parameters in \eqref{eq:MHD:vel}--\eqref{eq:MHD:temp} are $\eps$ the
Rossby number, $\delta$ the magnetic Reynolds number, $\nu$ a (non-dimensional) viscosity and
$\kappa$ a (non-dimensional) thermal diffusivity.
The orders of magnitude of the non-dimensional parameters are motivated by the physical
postulates of the Moffatt and Loper model. In particular, the parameters $\eps$, $\delta$, $\nu$, and $\kappa$ are all small.
Note that the ratio of the Coriolis to Lorentz forces in the model we are studying is of order 1, so for notational simplicity we have set this parameter, denoted by $N^2$ in \cite{moffattloper1994}, equal to 1.

For mathematical tractability we consider both \eqref{eq:MHD:vel}--\eqref{eq:MHD:temp} and \eqref{eq:AS:th}--\eqref{eq:AS:mag} below
on the periodic domain $\TT^3$, with all of the fields being mean free, a condition which is preserved by the equation.\footnote{To verify this property, one
first confirms that the mean zero condition is preserved in the $\Th$ and $\B$ component, \eqref{eq:MHD:temp}, as a straightforward consequence of
Gauss' theorem, noting that $\sigma$ is taken to be mean free in \eqref{eq:n:s}.
Then, by integrating \eqref{eq:MHD:vel} and using Gauss' theorem, recalling that the buoyancy term has zero
mean as derived already, one is left with the equation $\frac{d}{dt} V = \Omm \times V$, where $V= \int_{\TT^3}U\,dx$.
But this ODE has only the trivial solution for initial condition $V(0) = 0$, yielding the preservation of the mean free condition for $\U$.}
Notice that we have followed the usual convention of including the divergence-free condition on the magnetic field $\B$ in
\eqref{eq:MHD:mag}.  This is really a condition on the initial data, $\nabla \cdot B|_{t=0} = 0$, as this property is preserved by the dynamics of \eqref{eq:MHD:vel}--\eqref{eq:MHD:temp}.

The analysis in Moffat and Loper \cite{moffattloper1994} and Moffat \cite{moffatt2008} assumes that the statistics of the temperature field $\Th$, averaged over scales characteristic of the
turbulent regions, are prescribed.  Further simplifications are made in \cite{moffattloper1994,moffatt2008} based on the assumption
that $\eps$ and $\delta$ are both small, and for their purposes can be neglected.  In our work we carry
the analysis to a higher level of complexity and examine the three-dimensional coupled system \eqref{eq:MHD:vel}--\eqref{eq:MHD:temp}
in which a back reaction of the temperature field on the flow is allowed. We give a rigorous justification of the reduction of the full evolution system in the limit $\eps, \delta \to 0$ to
a stochastically driven active scalar equation by deriving suitable asymptotics in $\eps$ and $\delta$ at both finite and infinite time.  The limit system when $\eps = \delta = 0$ is the following active scalar equation:
\begin{equation}
 d \thl + u \cdot \nabla \thl \, dt =  \kappa \Delta \thl \, dt + \sigma dW \,,
   	\label{eq:AS:th}
\end{equation}
where  $\ul, \bl$ are related to $\thl$ via
\begin{align}
 	 \Omm \times \ul &=  -\nabla \prl + \BBB\cdot \nabla \bl  + \thl \bfe_3 + \nu \Delta \ul, \quad \nabla \cdot \ul = 0,
	\label{eq:AS:vel}\\
 	 0&= \BBB\cdot \nabla \ul +\Delta \bl \,.
	\label{eq:AS:mag}
\end{align}
Observe that the vector fields $\ul$ and $\bl$ are computed from the temperature $\thl$
and are not prescribed independent initial conditions.
Using \eqref{eq:AS:vel}--\eqref{eq:AS:mag} we can define a linear operator $M = (M_\ul, M_\bl)$ with $(u, b) = (M_\ul(\theta), M_\bl(\theta))$ and
it is crucially important in our
analysis below that $M$ produces two degrees of smoothing in space.  Vector manipulations as in  \cite{moffattloper1994,FriedlanderSuen2015},
taking the curl of \eqref{eq:AS:vel} three times and using $\nabla \cdot u = 0 = \nabla \cdot b$, yield the expression:
\begin{equation}\label{eq:eod}
\{[\nu \Delta^2 - (\BBB \cdot \nabla)^2]^2 + (\Omm \cdot \nabla)^2 \Delta \} u =
- [\nu \Delta^2 - (\BBB \cdot \nabla)^2] \nabla \times (\bfe_3 \times \nabla \theta) + (\Omm \cdot \nabla) \Delta (\bfe_3 \times \nabla \theta) \,.
\end{equation}
Hence the explicit expression for the Fourier multiplier symbols $\MMM_u(k)$ and $\MMM_b(k)$ as functions of the Fourier variable
$k = (k_1, k_2, k_3) \in \ZZ^3$ are
\begin{align}\label{eq:smb:u1}
\MMM_u (k) =  \frac{1}{D(k)} \left( (\nu |k|^4 + (\BBB \cdot k)^2)(k\times (\bfe_3 \times k)) + (\Omm \cdot k)|k|^2 (\bfe_3 \times k)  \right)
,\
\MMM_b(k) = \frac{i\BBB \cdot k}{|k|^2} \MMM_u(k) \,,
\end{align}
where
\begin{equation}
\label{eq:smb:u2}
D(k) = |k|^2 (\Omm \cdot k)^2 + ((\BBB \cdot k)^2 + \nu |k|^4)^2 \,.
\end{equation}
Since $D$ is of order $|k|^8$ whereas the numerator of $\MMM_u$ is of order $|k|^6$, the two degrees of smoothing of $M$
is evident from \eqref{eq:smb:u1}--\eqref{eq:smb:u2}.

It is worth emphasizing that the singular limit of \eqref{eq:MHD:vel}--\eqref{eq:MHD:temp} to \eqref{eq:AS:th}--\eqref{eq:AS:mag} in this work
bears some significant formal similarities to the infinite Prandtl limit for the stochastic Boussinesq equations
which we have recently considered in \cite{FoldesGlattHoltzRichards2015}.  Here however our problem
involves multiple small parameters and both the linear and non-linear structure structure of the governing
equations is quite different.  On the other hand the parameters $\nu, \kappa$ in \eqref{eq:MHD:vel}--\eqref{eq:MHD:temp} are also small in
practice suggesting other challenging singular perturbations problems of interest cf. \cite{FriedlanderRusinVicol2014,FriedlanderSuen2015,FriedlanderVicol2011,FriedlanderVicol2011b,FriedlanderVicol2012}.
Moreover a variety of other interesting singular limit problems arise from \eqref{eq:MHD:vel}--\eqref{eq:MHD:temp} but most of the
rigorous work so far is restricted to a deterministic setting.  See e.g.
\cite{Wu1997, DesjardinsDormyGrenier1999,Masmoudi2007, CheminDesjardinsGallagherGrenier2006, CaoWu2011,
FeffermanMcCormickRobinsonRodrigo2014,CalkinsJulienTobiasAurnou2015}.

\subsection*{Summary of Main Results}

We now give a preview of the main results of this work.
Precise statements are given below in the body of the work.
\begin{itemize}
\item[(i)] Consider any initial conditions $\U_{\eps, \delta}(0), \B_{\eps, \delta}(0), \Th_{\eps, \delta}(0)$,
where $\U_{\eps, \delta}(0), \B_{\eps, \delta}(0)$ are uniformly bounded in $L^2(\TT^3)$ independently of $\eps, \delta > 0$
and
\begin{align}
\label{eq:i1}
   \| \Th_{\eps, \delta}(0) - \thl(0) \| \to 0, \; \textrm{ as } \eps, \delta \to 0,
\end{align}
for some $\thl(0) \in L^2(\TT^3)$.
Here and in all that follows $\|\cdot\|$ denotes the usual $L^2$-norm.
Suppose that $(\U_{\eps, \delta}, \B_{\eps, \delta}, \Th_{\eps, \delta})$ and $\theta$ are corresponding solutions of \eqref{eq:MHD:vel}--\eqref{eq:MHD:temp} and
\eqref{eq:AS:th} respectively. Then for any $t>0$, there exists a sufficiently small $\gamma>0$ such that for $p>\gamma$,
\begin{align}
\label{eq:i2}
  \E \sup_{s \in [0,t]} \| \Th_{\eps, \delta}(s) - \thl(s) \|^{p} \leq C(\| \Th_{\eps, \delta}(0) - \thl(0) \|  + \eps + \delta)^{\gamma} \to 0, \; \textrm{ as } \eps, \delta \to 0,
\end{align}
for a constant $C>0$, where $\gamma$ and $C$ are both independent of
$\eps, \delta$ and $\E$ denotes the expected value (statistical mean).
Furthermore we show that, for any $t > 0$,
\begin{align}
\label{eq:i3}
  \E \int_{0}^{t}\| (\U_{\eps, \delta}(s),\B_{\eps, \delta}(s)) - M(\thl)(s)\|^{2}ds  \leq C(\| \Th_{\eps, \delta}(0) - \thl(0) \|  + \eps + \delta)^{\gamma} \to 0,  \; \textrm{ as } \eps, \delta \to 0.
\end{align}
For the precise statement see Theorem \ref{thm:f:t:eod} below.
\item[(ii)]  Regarding the active scalar equation \eqref{eq:AS:th} we show that, provided
\begin{align}
	\text{$\alpha_{e_1, m}$, $\alpha_{e_2, m}$,
		and $\alpha_{e_3, m}$ are non-zero
		for each $m \in \{0, 1\}$,}
 \label{eq:s:n:config}
 \end{align}
 then \eqref{eq:AS:th} satisfies a form of the H\"ormander bracket condition. See \eqref{eq:Hormander:easy} and
 Theorem \ref{lem:hbc} below.\footnote{While we provide complete details
  only under the assumption \eqref{eq:s:n:config}, in principle many other noise configurations could be addressed with our methods.}
    Invoking recent results of Hairer and Mattingly \cite{HairerMattingly2008, HairerMattingly2011}
    (see also \cite{HairerMattingly06,FoldesGlattHoltzRichardsThomann2013})
    we infer that, in a suitably chosen Wasserstein metric $\WW$,
  \begin{align}
  \label{eq:cont}
     \WW( \mu^1P_t , \mu^2P_t ) \leq C e^{-\gamp t} \WW(\mu^1, \mu^2), \quad \text{ for } t \geq 0,
  \end{align}
  for any probability measures $\mu^1, \mu^2$, where the constants $C$, $\gamp$ are independent of $t\geq 0$
  and $\mu^1, \mu^2$.  Here $P_t$ is the Markov semigroup associated to \eqref{eq:AS:th}
  so that $\mu^iP_t $ represents the law of solutions to \eqref{eq:AS:th} at time $t \geq 0$ initially distributed as $\mu^i$.
  In particular \eqref{eq:cont} immediately implies that, under the given condition \eqref{eq:s:n:config}, the equation
  \eqref{eq:AS:th} has a unique statistically stationary state $\mu$.  See Theorem~\ref{thm:c:w}
  for further details.

\item[(iii)] Assume the noise satisfies \eqref{eq:s:n:config}.
We prove that, for every $\eps, \delta > 0$, the system \eqref{eq:MHD:vel}--\eqref{eq:MHD:temp} possesses at least one statistically stationary state $\mu_{\eps, \delta}$ which satisfies certain exponential moment bounds independently of $\eps, \delta$.
Moreover, we show that any such collection $\mu_{\eps, \delta}$ converges to $\mu$ in the limit as $\eps, \delta \to 0$, at an algebraic rate, in a suitable Wasserstein metric; see Theorem \ref{thm:SSS:conv} below for the precise statement.
In particular, this implies that for any sufficiently  regular observable $\phi$,
\begin{align}
\label{eq:conv:meas}
  \left| \int \phi(\U, \B, \Th) d \mu_{\eps, \delta} - \int \phi( M(\thl), \thl) d \mu  \right| \to 0, \;  \textrm{ in the limit as } \eps, \delta \to 0.
\end{align}
\end{itemize}

We now sketch our method of proof for the results described in (i)--(iii), highlighting some of the particular challenges we encountered.
Observe that the convergence to the formal limit \eqref{eq:AS:th} as $\eps,\delta \rightarrow 0$ is singular in the sense that
there is a phase space mismatch. It is therefore apparent that our analysis must involve multiple time scales:
an initial `layer', that is, short times depending on $\eps$ and $\delta$ influenced by the difference in initial conditions,
an intermediate time scale, and finally a large time scale where the loss of nonlinear terms in the momentum and magnetic
equations is especially apparent.   This long time scale is embodied in the statistically stationary states of \eqref{eq:MHD:vel}--\eqref{eq:MHD:temp}
and of \eqref{eq:AS:th} which are the primary focus of this work.

Our analysis builds on a method developed recently in \cite{FoldesGlattHoltzRichards2015}, which draws on a simple but powerful observation from \cite{HairerMattingly2008}: if one can establish a contraction property for the limit equation as in \eqref{eq:cont}, then
the question of convergence of statistically stationary states can be reduced to the convergence of solutions on finite time scales (cf. \eqref{eq:i2}--\eqref{eq:i3}).  
Nevertheless proving the results herein requires significant novel developments. 
This is due in part to the specific structural challenges inherent in  \eqref{eq:MHD:vel}--\eqref{eq:MHD:temp}.  However, 
in the course of our work here, we develop general techniques to tackle multi-parameter problems as well as methods for 
lifting the convergence of statistics to an extended phase space.

As a first step we establish the convergence of solutions on finite time scales without imposing
any relative rates for which $\eps, \delta$ vanish.  To the best of our knowledge, such convergence results are
new for the MHD system, even in the deterministic setting, where our methods apply with straight-forward modifications.
Here one of our key observations is that a difference in initial conditions for the velocity and magnetic components has
 a negligible effect on the temperature, namely of an algebraic order of $\eps + \delta$.
 We also use non-trivial cancelations in the nonlinear terms to take advantage
 of the improved regularity properties of the limit equation; cf. \eqref{eq:3a} in Section~\ref{sec:fta}.

Additional challenges arise due to the probabilistic nature of \eqref{eq:MHD:vel}--\eqref{eq:MHD:temp}
 and \eqref{eq:n:s}.  For example, we find that estimates on the difference of solutions to the temperature equation lead to an integral inequality
 that requires certain almost sure bounds.  This leads us
 to develop a probabilistic modification of the Gr\"onwall inequality which relies on a delicate stopping time argument.
 See \eqref{pfe} and Lemma~\ref{lem:prob:gron} below.  In particular this approach leads to algebraic rates of convergence in temperature,
 as in \eqref{eq:i2}.  We are then able to transfer these algebraic rates to the convergence of the velocity and magnetic fields, see \eqref{eq:i3}. 
 Note that, due to the existence of the initial layer, one should
not expect the convergence as $\eps, \delta \to 0$ to be uniform in the $\U$ and $\B$ components
up to the initial time.

In order to obtain more precise asymptotics, one could seek to extend classical approaches involving
multiple time scales to our setting; see for example \cite{OMalley1974,KabanovPergamenshchikov2003}.
Such an approach would yield a `corrector', or in our multi-parameter case a series of correctors, whose
structure depends on the relative sizes of $\eps$ and $\delta$.  These correctors provide well-posed effective dynamics which
accurately approximate %see Remark \alert{(FILL)} below),
the full system \eqref{eq:MHD:vel}--\eqref{eq:MHD:temp} up to time zero when $\eps$ and $\delta$ are small.   See Remark~\ref{rmk:cor:anal} below.
While the accuracy of these correctors can be rigorously
established with estimates similar to those carried out in detail in Section~\ref{sec:fta}, our approach, which is focused on the convergence of long time 
statistics,  does not require such `intermediate' systems, so we omit these details.
For results in a similar setting see \cite{FoldesGlattHoltzRichards2015} and e.g. \cite{Wang2004a} for an analogous situation in a
deterministic context.

Having established suitable finite time convergence results we turn to analyze the limit equation \eqref{eq:AS:th}
in pursuit of \eqref{eq:cont}. Here we underline that a proof of the estimate \eqref{eq:cont} becomes tractable by including a stochastic forcing.
The estimate \eqref{eq:cont} immediately implies that statistically invariant states
of \eqref{eq:AS:th} are unique and ergodic, providing a rigorous foundation for some basic assumptions in statistical theories of turbulence; cf. \cite{Frisch95} or \cite{moffatt2008} in our geophysical setting.
Such considerations from turbulence
motivate the study of unique ergodicity in a variety of infinite-dimensional stochastic systems arising in fluids; see e.g.
\cite{FlandoliMaslowski1,HairerMattingly06,BarbuDaPrato2007, Debussche2011a,KuksinShirikian12,
FoldesGlattHoltzRichardsThomann2013, ConstantinGlattHoltzVicol2013, FriedlanderGlattHoltzVicol2014}
and references therein.

Motivated by the fundamental postulates of turbulence where energy cascades from large to small spatial scales,
we consider \eqref{eq:AS:th} driven by a `spectrally degenerate' stochastic forcing, namely where the noise acts through a
narrow range of frequencies.
While this degenerate setting may be attractive physically, it makes the analysis leading to
\eqref{eq:cont} substantially more difficult compared to the case when noise is active on all spatial scales.
To tackle this situation we implement a strategy from the recent groundbreaking works of Hairer and Mattingly
\cite{HairerMattingly06, HairerMattingly2008, HairerMattingly2011}.  These works develop
a theory of hypo-ellipticity for infinite-dimensional systems which allows one to establish unique ergodicity and contractivity
for certain semilinear stochastic PDEs with spectrally degenerate stochastic forcing.
Application of this theory requires demonstrating that the stochastic excitation can propagate to higher frequencies through the nonlinearity.
 This is done by verifying that a H\"ormander bracket condition is satisfied by the set of activated modes.  Such an analysis requires a
 detailed understanding of the nonlinear structure in the governing equations, as observed in previous works
 \cite{EMattingly2001, Romito2004, HairerMattingly06, HairerMattingly2011, FoldesGlattHoltzRichardsThomann2013,FriedlanderGlattHoltzVicol2014}.
In our setting we rely on the explicit formulation of $M$ in \eqref{eq:smb:u1}--\eqref{eq:smb:u2}
and a non-trivial construction to avoid degeneracies in frequency space.

The final stage of our analysis demonstrates that
 the unique invariant statistics for the limit equation \eqref{eq:AS:th} approximate any reasonable invariant
statistics for the original system \eqref{eq:MHD:vel}--\eqref{eq:MHD:temp} for $\eps,\delta$ small.
Indeed, as we already
identified above, the bounds \eqref{eq:i2}--\eqref{eq:i3} and \eqref{eq:cont} provide a means to transfer the algebraic rates in \eqref{eq:i2}
to the convergence of invariant states in a suitable Wasserstein metric following \cite{HairerMattingly2008,FoldesGlattHoltzRichards2015}.
In particular, such Wasserstein metrics are closely related to the topology of weak convergence and hence our results imply
that observations from a reasonable class of invariant states for \eqref{eq:MHD:vel}--\eqref{eq:MHD:temp} converge to
observations relative to the unique invariant measure for \eqref{eq:AS:th} in the limit as $\eps,\delta\rightarrow 0$; see main result (iii) above.

It should be emphasized that in our previous work \cite{FoldesGlattHoltzRichards2015} on the stochastic Boussinesq system, 
convergence of statistically invariant states on the temperature component (in the infinite Prandtl number limit) was established by taking a similar approach.
In contrast, in this current work we prove convergence of invariant states on the extended phase space, including the velocity and magnetic fields, which 
requires us to overcome a novel challenge arising due to the phase space mismatch between the small parameter system \eqref{eq:MHD:vel}--\eqref{eq:MHD:temp}
 and the limit equation \eqref{eq:AS:th}.
This is accomplished by exploiting the structure of a Wasserstein distance and estimates on the constitutive law \eqref{eq:smb:u1}--\eqref{eq:smb:u2}
in order to 'lift' the contractive property for \eqref{eq:AS:th} to the extended phase space.
This strategy should prove useful for other systems with analagous singular limits, in particular for the Boussinesq system.

\subsection*{Organization of the Paper}

The rest of the  manuscript is organized as follows.
In Section \ref{sec:not} we establish our notational conventions 
and recall some mathematical foundations of the equations \eqref{eq:MHD:vel}--\eqref{eq:MHD:temp} and \eqref{eq:AS:th};
existence and uniqueness of solutions, and the associated Markovian framework.  In Section \ref{sec:fta} we prove the finite time convergence of solutions to
\eqref{eq:MHD:vel}--\eqref{eq:MHD:temp} to those of \eqref{eq:AS:th} in the limit as $\eps,\del \rightarrow 0$, as described in point (i) above.  Section \ref{sec:lim} contains
some background on Wasserstein metrics, provides a proof of the contractive property \eqref{eq:cont} as discussed in point (ii), and the
 uniqueness of statistically stationary states for \eqref{eq:AS:th} follows as a consequence.
To prove \eqref{eq:cont} we carry out the relevant infinite-dimensional Lie bracket analysis to demonstrate that a form of the H\"{o}rmander condition is satisfied.  The proof of weak
convergence of statistically stationary states, as described in point (iii) above, is given in Section \ref{sec:conv}.  Finally, we include an appendix with parameter independent exponential moment bounds which will be utilized throughout the manuscript.

\section{Preliminaries}
\label{sec:not}

We begin by recalling some details about the functional setting of \eqref{eq:MHD:vel}--\eqref{eq:MHD:temp} and the formal limit equation
\eqref{eq:AS:th}.

The main function spaces are
\begin{align*}
	H :=  \left\{U \in L^2(\TT^3)^3: \nabla\cdot U  = 0,\int_{\TT^3} U\,dx =0\right\}
	&\times \left\{B \in L^2(\TT^3)^3: \nabla\cdot B  = 0,\int_{\TT^3} B\,dx =0\right\} \\
\times  \left\{\Theta \in L^2(\TT^3):\int_{\TT^3} \Theta\,dx =0 \right\},
\end{align*}
and
\begin{align*}
	V := H \bigcap \left\{U \in H^1(\TT^3)^3\right\}
	\times \left\{B \in H^1(\TT^3)^3\right\} \times \left\{\Theta \in  H^1(\TT^3)\right\},
\end{align*}
where $H^s(\TT^3)$ are the usual Sobolev spaces.
For the limit problem \eqref{eq:AS:th} we take $H' =  L^2(\TT^3)$ and $V' = H^1(\TT^3)$ with the mean free condition imposed.
We denote by $\|\cdot\|$ and $\langle \cdot,\cdot \rangle$ the usual $L^2$-norm and inner product, respectively.   For any $p \geq 2$
  we set
  \begin{align*}
  \|\sigma\|_{L^p}^p := \int_{\TT^2} \biggl( \sum_{\substack{k \in \ZZ^3_0,\\ m = 0, 1}} |\alpha_{k, m} \sigma_k^m(x) |^2 \biggr)^{p/2} dx \,,
  \end{align*}
  so that $\| \sigma \|_{L^2} = \| \sigma \|$ reduces to the usual Hilbert-Schmidt norm.  Note that $\|\sigma\|$ is controlled by 
  $\|\sigma\|_{L^p}$ for every $p\geq 2$.

We have the following existence results for \eqref{eq:MHD:vel}--\eqref{eq:MHD:temp}:

\begin{Prop}[Martingale solutions for the full system]\label{prop:weak:sol:full}
Consider \eqref{eq:MHD:vel}--\eqref{eq:MHD:temp} with any $\eps, \delta > 0$ and where
\eqref{eq:n:s} holds with only finitely many $\alpha_{k,m} \not = 0$ (or where the $\alpha_{k,m}$ decay
sufficiently rapidly as $|k| \to \infty$),
\begin{itemize}
  \item[(i)] Given any initial probability distribution $\mu \in Pr(H)$ there exists a
  stochastic basis $\mathcal{S} = (\Omega, \mathcal{F}, \Prb, \{\mathcal{F}_t\}_{t \geq 0}, W)$\footnote{Recall that
  a stochastic basis consists of a probability space, a right-continuous, complete filtration $\{\mathcal{F}_t\}_{t \geq 0}$
  along with a collection $W = \{ W^{k, m} \}$ of independent,
  identically distributed $1d$ Brownian motions adapted to this filtration.}
  and a corresponding
  stochastic process $(\U, \B, \Th)$ with
  \begin{align*}
	(\U, \B, \Th) \in L^2(\Omega; L^\infty_{loc}([0,\infty), H) \cap L^2_{loc}([0, \infty), V)),
  \end{align*}
  which is weakly continuous, adapted to the filtration provided by the stochastic basis,
  (weakly) solves \eqref{eq:MHD:vel}--\eqref{eq:MHD:temp}, and such that \emph{Law}$(\U(0), \B(0), \Th(0)) = \mu$.
  We say that $(\mathcal{S}, (\U, \B, \Th))$ is a \emph{Martingale solution of \eqref{eq:MHD:vel}--\eqref{eq:MHD:temp}}.
  \item[(ii)]  Moreover, for every $\eps,\delta > 0$,
  there exists a stationary Martingale solution $(\mathcal{S}, (\U_S, \B_S, \Th_S))$ such that this family of solutions satisfies the uniform moment bound
  \begin{align}
     \sup_{\eps,\delta \in(0,N]} \E  \exp(\eta (\eps \|\U_S\|^2 + \delta \|\B_S\|^2 + \| \Th_S\|^2_{L^3})) \leq  C_N < \infty,
     \label{eq:un:wk:stat:bnd}
  \end{align}
  for every $N>0$, where $\eta, C_N > 0$ are independent of $\eps, \delta > 0$.
  \end{itemize}
\end{Prop}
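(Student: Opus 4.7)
The plan for part (i) is to follow a standard Galerkin + compactness scheme for stochastic PDEs. First I would project \eqref{eq:MHD:vel}--\eqref{eq:MHD:temp} onto the subspace spanned by the first $N$ eigenfunctions of the Stokes/Laplace operators to obtain finite-dimensional Itô systems $(U^N,B^N,\Th^N)$, which are locally well-posed by standard SDE theory. The global existence of the Galerkin solutions and the passage $N\to\infty$ both rest on the same uniform energy estimate, which I describe next. Once such bounds are in hand, tightness of $\{\text{Law}(U^N,B^N,\Th^N)\}$ on the natural path space $L^2_{loc}([0,\infty);H)\cap C_{\text{loc}}([0,\infty);V^*)$ follows from a Prokhorov/Aubin--Lions argument. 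Applying Skorokhod's representation theorem (in a Jakubowski--Skorokhod form adapted to weak topologies) produces a new stochastic basis $\mathcal{S}$ and almost surely convergent copies, and then one passes to the limit in the nonlinearities against test functions to recover a Martingale solution with the prescribed initial distribution $\mu$.

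The key a priori bound uses the MHD cancellations. Taking the $L^2$ inner product of \eqref{eq:MHD:vel} with $\U$, of \eqref{eq:MHD:mag} with $\B$, and applying Itô's formula to $\|\Th\|^2$ in \eqref{eq:MHD:temp}, the rotation term vanishes by antisymmetry, $(\U\cdot\nabla\U,\U)=(\U\cdot\nabla\Th,\Th)=(\U\cdot\nabla\B,\B)=0$ by incompressibility, and the pairs $\langle \BBB\cdot\nabla\B,\U\rangle+\langle \BBB\cdot\nabla\U,\B\rangle$ and $\delta\langle \B\cdot\nabla\B,\U\rangle-\delta\langle \B\cdot\nabla\U,\B\rangle$ both vanish after integration by parts using $\nabla\cdot\B=0$. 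This yields
\begin{equation*}
 d\bigl(\eps\|\U\|^2+\delta\|\B\|^2+\|\Th\|^2\bigr)+2\bigl(\nu\|\nabla\U\|^2+\|\nabla\B\|^2+\kappa\|\nabla\Th\|^2\bigr)\,dt=-2\langle\Th\hat g,\U\rangle\,dt+2\langle\Th,\sigma dW\rangle+\|\sigma\|^2\,dt,
\end{equation*}
and the buoyancy coupling is absorbed via Poincar\'e and Young as $|\langle\Th\hat g,\U\rangle|\le \tfrac{\nu}{2}\|\nabla\U\|^2+C\|\Th\|^2$. The Burkholder--Davis--Gundy inequality then controls the martingale term and yields the required $L^p_\omega L^\infty_t H\cap L^p_\omega L^2_t V$ bounds needed to close the compactness argument.

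For part (ii) I would invoke a Krylov--Bogolyubov construction: starting from a Martingale solution with deterministic initial data, average the laws $\tfrac1T\int_0^T\text{Law}(\U(t),\B(t),\Th(t))\,dt$ in $Pr(H)$ and extract a weak limit $\mu_{\eps,\delta}$, which by a generalized Krylov--Bogolyubov argument for Markov-selection-type solutions (as in e.g.\ \cite{FlandoliMaslowski1}) lifts to a stationary Martingale solution $(\U_S,\B_S,\Th_S)$. The uniform exponential moment \eqref{eq:un:wk:stat:bnd} is the delicate point, and I would obtain it by decoupling the temperature estimate from the velocity and magnetic components. The crucial observation is that $(\U\cdot\nabla\Th,|\Th|^{p-2}\Th)=\tfrac1p\int\U\cdot\nabla|\Th|^p\,dx=0$, so applying Itô to $\|\Th\|_{L^p}^p$ (suitably regularized) for $p=3$ gives a stand-alone inequality of the form
\begin{equation*}
 d\|\Th\|_{L^3}^3+c\|\Th\|_{L^3}^3\,dt \le C\|\sigma\|_{L^3}^3\,dt+dM_t,
\end{equation*}
with a martingale term $M_t$ whose quadratic variation is controlled by $\|\Th\|_{L^3}^{2(p-1)}$ times a constant, permitting an exponential moment bound on $\|\Th_S\|_{L^3}^3$ independent of $\eps,\delta$ via the standard argument recorded in the appendix cited at the end of the excerpt.

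The main obstacle is propagating these $\Th$-bounds into uniform exponential moments for $\eps\|\U\|^2+\delta\|\B\|^2$. Returning to the combined energy identity above, Poincar\'e gives $\nu\|\nabla\U\|^2\ge (\nu/(NC_P^2))\,\eps\|\U\|^2$ and similarly for $\B$ whenever $\eps,\delta\le N$, so
\begin{equation*}
 d\bigl(\eps\|\U\|^2+\delta\|\B\|^2\bigr)+\lambda_N\bigl(\eps\|\U\|^2+\delta\|\B\|^2\bigr)\,dt \le C\|\Th\|^2\,dt
\end{equation*}
with $\lambda_N>0$ independent of $\eps,\delta$. Multiplying by $e^{\eta\lambda_N t}$, integrating, taking expectations and using the previously established exponential moments of $\|\Th_S\|_{L^3}\ge c\|\Th_S\|$ closes the bootstrap, provided $\eta$ is chosen small relative to $\lambda_N$ and $\|\sigma\|_{L^3}$. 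The technical hurdle is interweaving this deterministic Gronwall step with the stochastic exponential martingale estimate for $\Th_S$ so that the resulting constants depend only on $N$; I would handle this by first bounding $\E\exp(\eta\|\Th_S\|_{L^3}^2)$ at stationarity, then conditioning on $\Th_S$ and applying a pathwise Gronwall to $\eps\|\U_S\|^2+\delta\|\B_S\|^2$, and finally using H\"older to combine the two exponential bounds.
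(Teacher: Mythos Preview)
Your outline for part (i) matches the paper's approach: Galerkin regularization, uniform energy bounds exploiting the MHD cancellations, tightness, Skorokhod, and passage to the limit, exactly as for the 3D stochastic Navier--Stokes equations.

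For part (ii), however, there is a genuine gap in how you obtain the $L^3$ bound on $\Th$, and the paper handles it differently. Your plan is to apply It\^o to $\|\Th\|_{L^3}^3$ ``suitably regularized'' and use $(\U\cdot\nabla\Th,|\Th|\Th)=0$. The trouble is that under a \emph{full} Galerkin truncation the temperature equation becomes $d\Th^N+P_N(\U^N\cdot\nabla\Th^N)\,dt=\kappa\Delta\Th^N\,dt+P_N\sigma\,dW$, and the advection term tested against $|\Th^N|\Th^N$ gives $\langle \U^N\cdot\nabla\Th^N,\,P_N(|\Th^N|\Th^N)\rangle$, which is \emph{not} zero because $|\Th^N|\Th^N$ is not in the span of the first $N$ modes. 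So the $L^p$ cancellation is destroyed at the approximate level, and you lose the uniform $L^3$ control precisely where you need it. Trying instead to run the It\^o computation directly on the limiting Martingale solution runs into the usual regularity obstruction for weak solutions in 3D.

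The paper's ``additional twist'' is to truncate \emph{only} the $\U$ and $\B$ equations and leave \eqref{eq:MHD:temp} as a genuine advection--diffusion PDE $d\Th+\U^N\cdot\nabla\Th\,dt=\kappa\Delta\Th\,dt+\sigma\,dW$ with no projection on the transport term. Then $(\U^N\cdot\nabla\Th,|\Th|^{p-2}\Th)=0$ holds exactly for every $N$, the $L^3$ exponential moment is uniform along the approximations, and it survives the limit. This partially truncated system is still well-posed (the $\U^N,\B^N$ block is finite-dimensional and $\Th$ solves a linear equation given $\U^N$), so it carries a genuine Markov semigroup; stationary solutions are then obtained as limits of its invariant measures rather than by running Krylov--Bogolyubov on the non-Markovian Martingale solutions of the full system, which sidesteps the selection issues you allude to. Your decoupled Gronwall step for $\eps\|\U\|^2+\delta\|\B\|^2$ is in the right spirit, but the paper packages it together with the $\Th$ estimate in a single exponential-martingale inequality (Lemmas~\ref{l:emb}--\ref{l:imb}), which avoids the conditioning/H\"older splicing you propose.
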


\noindent Proposition~\ref{prop:weak:sol:full}, (i) can be established in a similar manner as for the 3D Navier stokes equations,
namely through a Galerkin regularization procedure, see e.g. \cite{AlbeverioFlandoliSinai2008}.  Regarding the
existence of stationary Martingale solutions, Proposition~\ref{prop:weak:sol:full}, (ii),
the proof proceeds exactly as in \cite{FoldesGlattHoltzRichardsWhitehead2015} using Lemmas~\ref{lem:mom:bnds:1}, \ref{l:imb}
in the Appendix below.   Here again the existence of stationary solutions follows as a limit of invariant states of a regularization of the governing equations
following \cite{AlbeverioFlandoliSinai2008}, but with an additional twist.
The idea is to consider a Galerkin truncation only in the velocity and magnetic components of
\eqref{eq:MHD:vel}--\eqref{eq:MHD:temp} (cf. \cite{FoldesGlattHoltzRichardsWhitehead2015}).
This allows the regularization procedure to preserve the advection-diffusion structure in \eqref{eq:MHD:temp} which
in turn permits the $\eps, \delta$-uniform bound on $L^3(\TT^3)$ norms of $\Th$ in \eqref{eq:un:wk:stat:bnd}
(or for that matter on $L^p(\TT^3)$ for any $p\geq 2$).    This uniform bound is needed for
Theorems~\ref{thm:f:t:eod} and \ref{thm:SSS:conv} below.

The formal limit equation is much more tractable analytically, and possesses unique, pathwise solutions.
\begin{Prop}[Well-Posedness for the limit equation]\label{prop:well:pose:lim}  Consider \eqref{eq:AS:th}
supplemented with \eqref{eq:AS:vel}, \eqref{eq:AS:mag} and subject to the same condition on $\sigma$ given by \eqref{eq:n:s}
as in Proposition \ref{prop:weak:sol:full}.
\begin{itemize}
  \item[(i)] Fix a stochastic basis $\mathcal{S}$ and any initial condition $\thl_0 \in L^2(\Omega; H')$
  which is $\mathcal{F}_0$ measurable.  Then there exists a unique
  \begin{align*}
	\thl \in L^2(\Omega; C([0,\infty), H') \cap L^2_{loc}([0, \infty), V'))
  \end{align*}
  which is $\mathcal{F}_t$-adapted, solves \eqref{eq:AS:th} and satisfies the initial condition $\thl(0) = \thl_0$.   We say that
  $\theta$ is a \emph{Pathwise solution of  \eqref{eq:AS:th}}.
  \item[(ii)] Writing
  $\theta(t, \theta_0)$ for the solution with initial condition $\theta_0 \in H'$ at time $t \geq 0$, we have for any $t \geq 0$
  \begin{align*}
     \theta(t, \theta_0^n) \to \theta(t, \theta_0) \; \; a.s. \textrm{ in } H' \quad \textrm{whenever}  \quad \theta_0^n \to \theta_0 \textrm{ in } H'.
  \end{align*}
  \item[(iii)] Pathwise solutions $\theta$ of \eqref{eq:AS:th} satisfy the exponential moment bounds \eqref{eq:mod:ad:1}--\eqref{eq:mod:ad:3}.
  \end{itemize}
\end{Prop}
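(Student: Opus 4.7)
The plan is to exploit the two degrees of smoothing provided by the operator $M = (M_\ul, M_\bl)$ in \eqref{eq:smb:u1}--\eqref{eq:smb:u2}, which reduces \eqref{eq:AS:th}--\eqref{eq:AS:mag} to a closed stochastic evolution equation in $\thl$ alone with a (nonlocal) divergence-free advection field $\ul = M_\ul(\thl)$ that is two Sobolev derivatives more regular than $\thl$. Relative to \eqref{eq:MHD:vel}--\eqref{eq:MHD:temp}, this extra regularity makes \eqref{eq:AS:th} essentially a semilinear parabolic SPDE with smooth drift, so pathwise well-posedness (rather than merely martingale existence as in Proposition~\ref{prop:weak:sol:full}) is within reach by classical means.

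For existence in (i), I would construct solutions via a spectral Galerkin truncation. Testing against $\thl$, using $\nabla\cdot \ul = 0$ so the advective term drops out, and applying the It\^o formula to $\|\thl\|^2$ yields the standard identity
\begin{align*}
   d\|\thl\|^2 + 2 \kappa \|\nabla \thl\|^2 dt = \|\sigma\|^2 dt + 2\langle \thl, \sigma dW\rangle.
\end{align*}
Taking expectations and invoking the Burkholder--Davis--Gundy inequality produces uniform bounds in $L^2(\Omega; L^\infty_{loc}([0,\infty), H') \cap L^2_{loc}([0,\infty), V'))$ for the Galerkin approximations. Aubin--Lions compactness in space, combined with fractional-in-time H\"older estimates for the stochastic integral, yields strong $L^2_{loc}([0,\infty), H')$ convergence on an enlarged probability space. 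Because $M_\ul : H' \to H^2(\TT^3)^3 \hookrightarrow L^\infty(\TT^3)^3$ is continuous (using Sobolev embedding in three dimensions), one passes to the limit in the advective term $M_\ul(\thl)\cdot \nabla \thl$ to produce a (weak) solution.

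For uniqueness in (i) and the stability claim (ii), consider two solutions $\thl^1, \thl^2$ on the same stochastic basis with initial data $\thl_0^1, \thl_0^2$, and set $\phi = \thl^1 - \thl^2$, $\tilde \ul = M_\ul(\phi)$. The difference satisfies
\begin{align*}
\partial_t \phi + \ul^1 \cdot \nabla \phi + \tilde \ul \cdot \nabla \thl^2 = \kappa \Delta \phi,
\end{align*}
and testing against $\phi$, using $\nabla\cdot \ul^1 = 0$ along with the two-derivative smoothing of $M_\ul$ and $H^2(\TT^3) \hookrightarrow L^\infty(\TT^3)$ to bound $\|\tilde \ul\|_{L^\infty} \lesssim \|\phi\|$, gives
\begin{align*}
\tfrac{d}{dt}\|\phi\|^2 + 2\kappa \|\nabla \phi\|^2 \leq C \|\nabla \thl^2\|^2 \|\phi\|^2.
\end{align*}
Since $\thl^2 \in L^2_{loc}([0,\infty), V')$ almost surely, pathwise Gr\"onwall closes the estimate. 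Uniqueness follows when $\phi(0)=0$; taking $\thl_0^1 = \thl_0^n$, $\thl_0^2 = \thl_0$, together with dominated convergence (justified via the exponential moments from (iii) to control the random Gr\"onwall factor uniformly in $n$), yields the almost sure continuity statement of (ii).

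For (iii), the bounds \eqref{eq:mod:ad:1}--\eqref{eq:mod:ad:3} follow by applying the It\^o formula to functionals of the form $\exp(\eta \Phi(\thl))$ for appropriate energy-type quantities $\Phi$, exactly as in the parameter-independent bounds for \eqref{eq:MHD:vel}--\eqref{eq:MHD:temp} derived in Lemma~\ref{lem:mom:bnds:1}; the divergence-free property of $M_\ul(\thl)$ ensures that the nonlinear contributions vanish in these estimates, and the decay assumption on $\sigma$ guarantees $\|\sigma\|_{L^p} < \infty$ for all $p \geq 2$. The only substantive obstacle in the whole argument is the uniqueness estimate, where one must bound $M_\ul(\phi)$ in $L^\infty$ by $\|\phi\|$ alone; this is precisely where the two-derivative gain encoded in \eqref{eq:smb:u1}--\eqref{eq:smb:u2} is indispensable, since without it one would face the usual three-dimensional obstructions to pathwise uniqueness.
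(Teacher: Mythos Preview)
Your proposal is correct and matches what the paper intends. The paper does not actually give a proof of this proposition: it simply remarks that the result ``is again a straightforward extension of known results in the deterministic case, cf.~\cite{FriedlanderSuen2015}, particularly since we are working with an additive noise,'' and defers part (iii) to Lemma~\ref{lem:mom:bnds:1} in the Appendix. Your outline---Galerkin approximation with the divergence-free energy identity, pathwise uniqueness and continuous dependence via the two-derivative smoothing $\|M_\ul(\phi)\|_{L^\infty}\lesssim\|M_\ul(\phi)\|_{H^2}\lesssim\|\phi\|$ and Gr\"onwall, and the exponential moment bounds of Lemma~\ref{lem:mom:bnds:1}---is precisely the standard route the paper has in mind.

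Two very minor remarks. First, in your uniqueness estimate the right-hand side is naturally $C\|\nabla\thl^2\|\,\|\phi\|^2$ rather than $C\|\nabla\thl^2\|^2\|\phi\|^2$; either is locally integrable in time, so Gr\"onwall closes regardless. Second, for (ii) no dominated convergence is needed: since the Gr\"onwall factor involves only $\thl^2=\thl(\cdot,\thl_0)$, which is independent of $n$, the almost sure convergence follows directly from $\|\thl_0^n-\thl_0\|\to 0$.
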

\noindent This well-posedness result for \eqref{eq:AS:th} is again a straightforward
extension of known results in the deterministic case, cf. \cite{FriedlanderSuen2015},  particularly since we
are working with an additive noise.  For discussion of the exponential moment bounds, see Lemma \ref{lem:mom:bnds:1} in the Appendix.

We finally recall the Markovian setting of \eqref{eq:AS:th} as follows.
Denote by $\text{Pr}(H')$ the space of Borel probability measures on $H'$.
Also denote by $M_b(H')$ and $C_b(H')$ the
bounded measurable and bounded continuous (real valued) functions on $H'$, respectively.  The Markov transition functions
are defined by
\begin{align*}
   P_t( \theta_0, A) = \Prb( \theta(t, \theta_0) \in A) \quad \textrm{ for any } t \geq 0, \theta_0 \in H', A \in \mathcal{B}(H'),
\end{align*}
where $\mathcal{B}(H')$ is the set of Borel subsets of $H'$
and the associated semigroup $\{P_{t}\}_{t \geq 0}$ acts on `observables' $\phi \in M_b(H')$ according to
\begin{align*}
  P_t \phi(\theta_0) :=  \int_{H'}  \phi(\theta) P_t( \theta_0, d\theta) = \E \phi(\theta(t, \theta_0)),
\end{align*}
and on `initial distributions' $\mu \in Pr(H')$ by
\begin{align*}
  \mu P_t (A) := \int_{H'}  P_t( \theta, A) d\mu(\theta).
\end{align*}
Recall that $\mu\in\Pr (H')$ is an invariant measure for $P_t$, i.e. an invariant measure for \eqref{eq:AS:th}, if
$\mu P_t = \mu$ for all $t\geq 0$.  Since $\theta_0 \mapsto \theta(t, \theta_0)$ is continuous by Proposition \ref{prop:well:pose:lim}
for each $t \geq 0$, we have that $P_t$ is \emph{Feller}, mapping $C_b(H')$ to itself.

Recall that the Krylov-Bogolyubov procedure and the compact embedding of $H^1$ into $L^2$ immediately produce the existence
of an invariant measure for \eqref{eq:AS:th}--\eqref{eq:AS:mag}, while the uniqueness, a much more delicate issue, is addressed below
in Section~\ref{sec:lim}.  Using the moment bound \eqref{eq:mod:ad:3} we infer that, for any invariant measure and any $p \geq 2$
\begin{align}
   \int  \| \theta \|_{L^p} d \mu(\theta)  \leq C < \infty,
   \label{eq:stat:exp:lim}
\end{align}
where the constant $C$ is as in \eqref{eq:un:wk:stat:bnd} above.

\section{Finite Time Convergence Analysis}
\label{sec:fta}

In this section we establish the finite time convergence of solutions of \eqref{eq:MHD:vel}--\eqref{eq:MHD:temp} to solutions
of \eqref{eq:AS:th} in the limit as $\eps, \delta \to 0$, see (i) in the summary of our main results.
We again emphasize that no relative rates between $\eps$ and $\delta$ are required here.
In Section~\ref{sec:conv} these results will be used to establish the convergence of stationary solutions (see Theorem~\ref{thm:SSS:conv} below).  The section concludes with some remarks concerning `corrector systems'
which provide a different well-posed approximation to \eqref{eq:MHD:vel}--\eqref{eq:MHD:temp}, namely an approximation that is accurate up to time zero.

Note that we are working only with zero mean functions and vector fields.  This restriction justifies the use of the Poincar\' e inequality
which we apply below without further comment.

\begin{Thm}\label{thm:f:t:eod}
For every $\eps, \delta \in (0, 1]$ let $(\U, \B, \Th)$ be a Martingale solution
of \eqref{eq:MHD:vel}--\eqref{eq:MHD:temp} in the sense of Proposition~\ref{prop:weak:sol:full} and let
$\thl$ be a solution of
\eqref{eq:AS:th} as in Proposition~\ref{prop:well:pose:lim}.
Suppose that there is $\eta_0 > 0$ such that
\begin{align}
   \sup_{\eps, \delta \in (0, 1]}
   \E  \exp(\eta_0 (\eps \|\U(0)\|^2 + \delta \|\B(0)\|^2 + \| \Th(0)\|^2_{L^3} + \| \thl(0)\|^2_{L^3})) \leq C_0 < \infty.
   \label{eq:ft:d:a:1}
\end{align}
Then there exists $\eta_1 = \eta_1 (\eta_0, \nu, \kappa, C_0)$ such that for each  $T > 0$, $\eta \in (0, \eta_1]$, $\gamma \in (0, \eta/(\eta + CT)]$,
and $p>\gamma$, there is a constant
$C = C(p,\eta_0, \nu, \kappa, C_0)$ such that
\begin{align}
\label{eq:conv:th:f:t}
   \E &\sup_{t \in [0,T]} \|  \Th(t) - \thl(t) \|^{p}
   \\ &\leq C \exp(\eta T \|\sigma\|_{L^3}^2) \left((\eps + \delta)^{\gamma} +
   \left(\E \| \Th(0) - \thl(0)\|^2 + \eps \|\U(0) - M_\ul(\thl)(0)  \|^2
   + \delta  \| \B(0) - M_\bl(\thl)(0) \|^2 \right)^{\gamma} \right)^{1/2}, \notag
\end{align}
and
\begin{align}
 \label{eq:conv:ub:f:t}
   &\E \int_0^T  \left(\|\U(t) - M_\ul(\thl)(t)\|^{2}_{H^1} +  \|B(t) - M_\bl(\thl)(t)\|^{2}_{H^1} \right) dt \\
    &\quad \leq C T \exp(\eta T \|\sigma\|_{L^3}^2) \left((\eps + \delta)^{\gamma} +
   \left(\E \| \Th(0) - \thl(0)\|^2  + \eps \|\U(0) - M_\ul(\thl)(0)  \|^2
   + \delta  \| \B(0) - M_\bl(\thl)(0) \|^2\right)^{\gamma} \right)^{1/2} \! \! \!. \notag
\end{align}
\end{Thm}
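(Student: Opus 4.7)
The strategy is to set $\widetilde\Th := \Th - \thl$, $\widetilde\U := \U - M_\ul(\thl)$, $\widetilde\B := \B - M_\bl(\thl)$, and derive an energy identity for these differences. The key observation is that the noise $\sigma\, dW$ appears identically in \eqref{eq:MHD:temp} and in \eqref{eq:AS:th}, so it cancels in the subtracted equation; the difference $\widetilde\Th$ therefore satisfies the \emph{random PDE}
\[
\partial_t \widetilde\Th + \U\cdot \nabla \widetilde\Th + \widetilde\U \cdot \nabla \thl = \kappa\Delta\widetilde\Th,
\]
and one may differentiate $\|\widetilde\Th\|^2$ pathwise, avoiding the It\^o correction entirely. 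Subtracting \eqref{eq:AS:vel}--\eqref{eq:AS:mag} from \eqref{eq:MHD:vel}--\eqref{eq:MHD:mag} yields evolution equations for $\widetilde\U, \widetilde\B$ whose right-hand sides carry the buoyancy coupling $\widetilde\Th\, \bfe_3$ together with remainder terms of order $\eps$ (from $\eps\partial_t\U$ and $\eps\U\cdot\nabla\U$) and of order $\delta$ (from $\delta\B\cdot\nabla\B$ and $\delta(\U\cdot\nabla\B - \B\cdot\nabla\U)$), plus the time-derivative mismatches $\eps\partial_t M_\ul(\thl)$ and $\delta\partial_t M_\bl(\thl)$; the latter are controlled by the two-derivative smoothing encoded in \eqref{eq:smb:u1}--\eqref{eq:smb:u2} applied to $\partial_t\thl$ (itself recovered from \eqref{eq:AS:th}).

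I will then consider the combined energy $E(t) := \|\widetilde\Th(t)\|^2 + \eps\|\widetilde\U(t)\|^2 + \delta\|\widetilde\B(t)\|^2$. Taking $L^2$-inner products of the three difference equations with $\widetilde\Th$, $\widetilde\U$, $\widetilde\B$ respectively, the Coriolis term drops by antisymmetry of $\Omm\times\cdot$, the transport term $\langle\U\cdot\nabla\widetilde\Th,\widetilde\Th\rangle$ vanishes by $\nabla\cdot\U=0$, and---crucially---the two Lorentz cross terms $\langle\BBB\cdot\nabla\widetilde\B,\widetilde\U\rangle$ and $\langle\BBB\cdot\nabla\widetilde\U,\widetilde\B\rangle$ cancel by integration by parts along the constant direction $\BBB$. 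The resulting differential inequality has the schematic form
\[
\tfrac{1}{2}\tfrac{d}{dt} E + \kappa\|\nabla\widetilde\Th\|^2 + \nu\|\nabla\widetilde\U\|^2 + \|\nabla\widetilde\B\|^2 \leq -\langle\widetilde\U\cdot\nabla\thl,\widetilde\Th\rangle + \langle\widetilde\Th\, \bfe_3,\widetilde\U\rangle + R_{\eps,\delta}(t).
\]
The delicate coupling term is $-\langle\widetilde\U\cdot\nabla\thl,\widetilde\Th\rangle$: integrating by parts against $\widetilde\U$ (using $\nabla\cdot\widetilde\U=0$) I rewrite it as $\langle\widetilde\U\,\thl,\nabla\widetilde\Th\rangle$, and by H\"older ($L^6\cdot L^3\cdot L^2$), the Sobolev embedding $H^1\hookrightarrow L^6$, and Young's inequality, bound it by $\tfrac{\kappa}{2}\|\nabla\widetilde\Th\|^2 + \tfrac{\nu}{2}\|\nabla\widetilde\U\|^2 + C\|\thl\|_{L^3}^{8}\|\widetilde\U\|^2$; this is the cancellation alluded to in \eqref{eq:3a}, exploiting the improved regularity of the limit solution. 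The buoyancy term is absorbed into $E(t)$ by Cauchy--Schwarz, and analogous splittings control $R_{\eps,\delta}$: for instance $\eps|\langle\U\cdot\nabla\U,\widetilde\U\rangle|\leq \tfrac{\nu}{4}\|\nabla\widetilde\U\|^2 + C\eps^2\|\U\|^2\|\nabla\U\|^2$, with the $L^1_t$ bound on $\|\nabla\U\|^2$ coming from Proposition~\ref{prop:weak:sol:full}.

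After absorbing the dissipation, what remains is a random Gr\"onwall-type inequality $\tfrac{d}{dt}E(t)\leq G(t)E(t) + (\eps+\delta)H(t)$, in which both $G$ and $H$ are almost surely in $L^1(0,T)$ but have only algebraic moments in $\omega$. This is the \emph{main obstacle}: the classical Gr\"onwall lemma would require $\E\exp(\int_0^T G(s)\,ds)<\infty$, which the uniform exponential moment bounds \eqref{eq:ft:d:a:1}, \eqref{eq:un:wk:stat:bnd}, and \eqref{eq:mod:ad:3} provide only with a $T$-dependent exponent. To overcome this I will apply the probabilistic Gr\"onwall inequality of Lemma~\ref{lem:prob:gron}, via the stopping time $\tau_N := \inf\{t\leq T : \int_0^t G(s)\,ds\geq N\}$: on $[0,\tau_N\wedge T]$, deterministic Gr\"onwall gives $\sup_{t}E(t)\leq e^N\bigl(E(0)+(\eps+\delta)\int_0^T H\,ds\bigr)$, while the exponential moment bounds furnish an exponential tail $\Prb(\tau_N<T)\leq C\exp(-\eta_1 N)$ for $\eta_1=\eta_1(\eta_0,\nu,\kappa,C_0)$. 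Optimizing over $N$ converts this exponential bound into an algebraic rate of convergence in $E$ with exponent $\gamma\in (0,\eta/(\eta+CT)]$, whence H\"older's inequality in $\omega$ produces \eqref{eq:conv:th:f:t} for any $p>\gamma$. Estimate \eqref{eq:conv:ub:f:t} then follows from the same differential inequality by time-integrating the dissipation $\int_0^T(\nu\|\nabla\widetilde\U\|^2+\|\nabla\widetilde\B\|^2)\,dt$ on the left-hand side, combined with the identical stopping-time device.
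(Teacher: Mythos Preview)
Your overall architecture---subtracting the constitutive law, forming the differences $\widetilde\Th,\widetilde\U,\widetilde\B$, using the It\^o correction only through $\eps\,\partial_t M_\ul(\thl)$ and $\delta\,\partial_t M_\bl(\thl)$, cancelling the Lorentz cross terms, and closing with a stopping-time Gr\"onwall via Lemma~\ref{lem:prob:gron}---matches the paper's proof closely. There is, however, a genuine gap at the step you flag as ``delicate,'' and it is precisely the place where the paper's argument diverges from yours.

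Your claimed bound
\[
|\langle\widetilde\U\,\thl,\nabla\widetilde\Th\rangle| \leq \tfrac{\kappa}{2}\|\nabla\widetilde\Th\|^2 + \tfrac{\nu}{2}\|\nabla\widetilde\U\|^2 + C\|\thl\|_{L^3}^{8}\|\widetilde\U\|^2
\]
is not obtainable from $L^6\cdot L^3\cdot L^2$ H\"older plus $H^1\hookrightarrow L^6$ in three dimensions: the Sobolev embedding is an endpoint, so $\|\widetilde\U\|_{L^6}$ costs a full gradient and no interpolation with $\|\widetilde\U\|$ is available. What you actually get is $\tfrac{\kappa}{2}\|\nabla\widetilde\Th\|^2 + \tfrac{C}{\kappa}\|\thl\|_{L^3}^2\|\nabla\widetilde\U\|^2$. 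This second term cannot be absorbed into the dissipation $\nu\|\nabla\widetilde\U\|^2$ (the coefficient $\|\thl\|_{L^3}^2$ is not small), and it cannot be written as $G(t)E(t)$ either: your energy $E$ carries only $\eps\|\widetilde\U\|^2$, so feeding $\|\nabla\widetilde\U\|^2$ back through Poincar\'e and dividing by $\eps$ produces an $\eps^{-1}$ in $G$, destroying uniformity. Thus the combined-energy differential inequality does not close as stated.

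The paper avoids this by \emph{not} summing the three estimates into a single scalar inequality. It keeps the $(\widetilde\U,\widetilde\B)$ energy estimate \eqref{fes} and the $\widetilde\Th$ estimate \eqref{pfe} separate: in \eqref{fes}, the only coupling to $\widetilde\Th$ is the buoyancy term $\langle\widetilde\Th\,\bfe_3,\widetilde\U\rangle$, which contributes $\tfrac{C}{\nu}\int\|\widetilde\Th\|^2$ with \emph{no} $\|\thl\|_{L^3}$ factor; in \eqref{pfe}, the coupling gives $\sup_s\|\widetilde\Th\|^2 \leq \|\widetilde\Th(0)\|^2 + \tfrac{C}{\kappa}\bigl(\sup_s\|\thl\|_{L^3}^2\bigr)\int\|\nabla\widetilde\U\|^2$. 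Substituting \eqref{fes} into \eqref{pfe} produces a closed integral inequality for $\chi(t)=\sup_{[0,t]}\|\widetilde\Th\|^2$ in which the troublesome $\|\thl\|_{L^3}^2$ appears only as a multiplicative constant $K$ on the right-hand side---exactly the structure Lemma~\ref{lem:prob:gron} is built to handle, with the stopping time $\tau_K=\inf\{s:\|\thl(s)\|_{L^3}^2\geq K\}$. Your scheme can be repaired by adopting this two-step substitution rather than the single combined energy.

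A minor point: the cancellation in \eqref{eq:3a} is not about the coupling term you discuss but about the convective remainder $\eps\langle\U\cdot\nabla\U,\widetilde\U\rangle$. Writing $\U=\widetilde\U+\ul$ and using $\langle\U\cdot\nabla\widetilde\U,\widetilde\U\rangle=0$ reduces it to $\langle\U\cdot\nabla\ul,\widetilde\U\rangle$, and the two-derivative smoothing of $M_\ul$ then puts $\ul\in L^\infty$, giving $\|\U\|\|\nabla\widetilde\U\|\|\thl\|$ rather than the $\|\U\|^2\|\nabla\U\|^2$ you wrote; this is what makes that remainder genuinely of order $\eps^2$.
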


\begin{Rmk}
As identified in Proposition \ref{prop:weak:sol:full}, \eqref{eq:ft:d:a:1} holds for a collection of stationary Martingale
solutions of \eqref{eq:MHD:vel}--\eqref{eq:MHD:temp}.  The bound \eqref{eq:ft:d:a:1} holds for the unique stationary state of
\eqref{eq:AS:th} due to \eqref{eq:stat:exp:lim}.   These observations will play a crucial role below in Section~\ref{sec:conv}.
\end{Rmk}

Before proceeding to the proof we set notations for several operators that are used extensively.
We denote the Coriolis-Stokes operator by $A$ which acts on
sufficiently smooth, divergence free vector fields according to
\begin{align}
 \OA \ul  := -\nu \Delta \ul + P \Omm \times \ul \,,
  \label{c0}
\end{align}
where $P$ is the Leray projector onto divergence free vector fields.
Note that $P$ commutes with $\Delta$, and therefore we omit
$P$ in front of the Laplacian term in \eqref{c0}. Since $u \mapsto
\Omm \times u$ is a skew symmetric operator, then $\langle P \Omm \times \ul, \ul \rangle = \langle \Omm \times \ul, \ul \rangle = 0$,
and therefore
$A$ is positive definite on zero mean functions with
\begin{align}
	\langle A u, u \rangle = \nu \|\nabla u\|^2 \,,
	\quad  \textrm{ for all } u \in H^1(\TT^3)^3 \textrm{ with } \nabla\cdot u  = 0
	\label{eq:A:pos:def}
\end{align}
which we use below without further comment.
We also make use of the operator $Q$ defined as
\begin{align}
  \OQ  \ul  := A  \ul - (-\Delta)^{-1} (\BBB\cdot \nabla)^2  \ul \,,
  \label{eq:oq:op}
\end{align}
where $(-\Delta)^{-1}$ has the range of zero mean functions.
Note that
\begin{align*}
\langle Q u, u \rangle \geq \langle A u, u \rangle =
\nu \|\nabla u\|^2, \,  \quad  \textrm{ for all } u \in H^1(\TT^3)^3 \textrm{ with } \nabla\cdot u  = 0,
\end{align*}
so that, in particular, $Q$ is positive definite.
Thus $A$ and $Q$ are invertible operators, and we may
rewrite the limit equation
\eqref{eq:AS:vel}--\eqref{eq:AS:mag} as
\begin{equation}\label{cnt}
	\ul = Q^{-1} P \bfe_3 \thl \,, \qquad \bl = (-\Delta)^{-1} (\BBB \cdot \nabla \ul)\,.
\end{equation}
\begin{Rmk}
  Notice that by \eqref{cnt}, $\ul$ and $\bl$ are respectively two
  and three degrees smoother than $\theta$. By this we mean that for any $s \in \RR$ if $\theta \in H^s$, then $\ul  \in H^{s+2}$ and
  $\bl \in H^{s +3}$, as can be seen from corresponding symbols given explicitly in \eqref{eq:smb:u1}.
\end{Rmk}

\begin{proof}[Proof of Theorem \ref{thm:f:t:eod}]
In what follows $C$ denotes any constant which depends on $\kappa$, $\nu$, and
$C_0$, but which is independent of $\eps$, $\delta$, $K$, $\sigma$, and $t>0$.

Working from \eqref{cnt}, \eqref{eq:AS:th} we have
\begin{equation}\label{ilu}
d\ul = Q^{-1} P \bfe_3 d\thl =  Q^{-1} P \bfe_3 (\kappa \Delta \thl
- \ul \cdot \nabla \thl) dt + Q^{-1} P \bfe_3 \sigma dW \,.
\end{equation}
Observe \eqref{eq:AS:vel} takes the form
$A\ul = P \BBB \cdot \nabla\bl + P \bfe_3 \thl$ and adding it to
an $\eps$ multiple of \eqref{ilu} we obtain
\begin{equation} \label{fcu}
\eps du + Au dt = (P \BBB \cdot \nabla b + P \bfe_3 \thl) dt +
\eps Q^{-1} P \bfe_3 (\kappa \Delta \thl
- \ul \cdot \nabla \thl) dt + \eps Q^{-1} P \bfe_3 \sigma dW \,.
\end{equation}
Next, we derive an evolution equation for $b$. From
\eqref{cnt}, \eqref{eq:AS:th} we obtain that
\begin{equation}\label{ilb}
db = (-\Delta)^{-1} \BBB \cdot \nabla du = R (\kappa \Delta \thl
- \ul \cdot \nabla \thl) dt + R \sigma dW \,,
\end{equation}
where
\begin{equation}\label{eq:R}
R := (-\Delta)^{-1} (\BBB \cdot \nabla) Q^{-1} P \bfe_3 \,.
\end{equation}
Adding a $\delta$-multiple of \eqref{ilb} to \eqref{eq:AS:mag}
yields
\begin{equation}\label{fcb}
\delta d\bl - \Delta \bl dt = (\BBB \cdot \nabla \ul) dt +
\delta R (\kappa \Delta \thl
- \ul \cdot \nabla \thl) dt + \delta R \sigma dW \,.
\end{equation}
Set
\begin{align}\label{eq:not}
	v :=  \U - \ul, \quad
	f := \B - \bl , \quad \phi := \Th -  \thl,
\end{align}
where $(\U, \B, \Th)$ is solution of
\eqref{eq:MHD:vel}--\eqref{eq:MHD:temp}.
Comparing \eqref{fcu} and \eqref{eq:MHD:vel} we obtain
\begin{equation}\label{d:vel}
\eps dv + Av dt = (P \BBB\cdot \nabla f + P \bfe_3 \phi)dt + P(\delta \B \cdot \nabla \B - \eps \U \cdot \nabla \U)dt - \eps Q^{-1} P \bfe_3 (\kappa \Delta \thl
- \ul \cdot \nabla \thl)dt - \eps Q^{-1} P \bfe_3 \sigma dW \,.
\end{equation}
Similarly by substracting \eqref{fcb} from \eqref{eq:MHD:mag} we have
\begin{equation}\label{d:mag}
	\delta df  - \Delta f dt
		= \BBB \cdot \nabla v dt -
		\delta (\U\cdot \nabla \B - \B \cdot \nabla \U) dt
		-\delta R (\kappa \Delta \thl - \ul \cdot \nabla \thl) dt - \delta R \sigma dW.
\end{equation}
Next, comparing \eqref{eq:MHD:temp} and \eqref{eq:AS:th} we obtain
\begin{equation}\label{d:tem}
\partial_t \phi - \kappa \Delta \phi = - \U \cdot \nabla \phi - v \cdot \nabla \thl \,.
\end{equation}
Applying the It\= o formula to \eqref{d:vel}
and recalling \eqref{eq:A:pos:def} yields
\begin{multline}\label{if:vel}
\frac{\eps}{2} d \|v\|^2 + \nu \|\nabla v\|^2 dt =
\left(\langle \BBB \cdot \nabla f, v \rangle +
 \langle \bfe_3 \phi, v \rangle  \right) dt
- \eps \langle \U \cdot \nabla \U, v \rangle dt
+ \delta \langle \B \cdot \nabla \B, v \rangle dt
\\
- \eps \langle   Q^{-1} P \bfe_3 (\kappa \Delta \thl
- \ul \cdot \nabla \thl), v \rangle dt
+ \frac{\eps}{2} \|Q^{-1} P \bfe_3 \sigma \|^2 dt
 -
\eps \langle Q^{-1} P \bfe_3 \sigma, v \rangle dW \,.
\end{multline}
Similarly \eqref{d:mag} yields
\begin{align}
\frac{\delta}{2} d\|f\|^2 +  \|\nabla f\|^2 dt =
 	\langle &\BBB \cdot \nabla v, f\rangle dt
	- \delta \langle \U \cdot \nabla \B - \B \cdot \nabla \U, f \rangle dt
	- \delta \langle R (\kappa \Delta \thl - \ul \cdot\nabla \thl), f \rangle dt
	\notag\\
	&+ \frac{\delta}{2} \|R\sigma\|^2 dt - \delta \langle R\sigma, f \rangle dW \,.
	\label{if:mag}
\end{align}
Finally from \eqref{d:tem} and the fact that $\nabla \cdot v = \nabla \cdot U = 0$,
\begin{equation}\label{if:tem}
\frac{1}{2} \frac{d}{dt}\|\phi\|^2 + \kappa \|\nabla \phi\|^2 =
\langle v \cdot \nabla \phi, \thl \rangle \,.
\end{equation}
Addition of \eqref{if:vel} and \eqref{if:mag}, and integration in time up to $t \wedge \tau$, where $\tau$ is any stopping time,  gives
\begin{align*}
\frac{1}{2} ( \eps \|v(t \wedge \tau)\|^2
&+ \delta \|f(t \wedge \tau)\|^2) +
	\int_0^{t \wedge \tau}( \nu \|\nabla v\|^2 + \|\nabla f\|^2) \, ds
	=
	\frac{1}{2} ( \eps \|v(0)\|^2 + \delta \|f(0)\|^2)\\
	&+ \int_0^{t \wedge \tau} \langle \bfe_3 \phi, v \rangle\,  ds
	- \eps \int_0^{t \wedge \tau} \langle \U \cdot \nabla \U, v \rangle \, ds
	+ \delta \int_0^{t \wedge \tau} \left( \langle \B \cdot \nabla \B, v \rangle
	- \langle \U \cdot \nabla \B - \B \cdot \nabla \U, f \rangle \right)\, ds
\\
	&+ \eps \int_0^{t \wedge \tau} \left(
	\frac{1}{2} \|Q^{-1} P \bfe_3 \sigma \|^2 -
	\langle   Q^{-1} P \bfe_3 (\kappa \Delta \thl
	- \ul \cdot \nabla \thl), v \rangle  \right) \, ds
\\
&+ \delta \int_0^{t \wedge \tau}
  \left( \frac{1}{2} \|R\sigma\|^2 -  \langle R (\kappa \Delta \thl - \ul \cdot\nabla \thl), f \rangle \right) \, ds
- \int_0^{t \wedge \tau}  \left( \eps \langle Q^{-1} P \bfe_3 \sigma, v \rangle  + \delta \langle R\sigma, f \rangle \right) dW \,.
\end{align*}
Here we note that $W$ is the same process in both \eqref{if:vel}, \eqref{if:mag}, and we have used that $(\BBB\cdot \nabla)$
is anti-symmetric to cancel terms.
Next, we estimate terms on the right hand side. First,
\begin{align*}
|\langle  \bfe_3 \phi, v \rangle | \leq \|\phi\| \|v\| \leq
\frac{\nu}{8} \|\nabla v\|^2 + \frac{C}{\nu} \|\phi\|^2 \,.
\end{align*}
Since $\langle U \cdot \nabla v, v \rangle = 0$,
$\ul$ is two degrees smoother than $\thl$ (by \eqref{cnt}),
and using the embedding $H^2 \hookrightarrow L^{\infty}$, we have
\begin{align}
|\langle \U \cdot \nabla \U, v \rangle| &=
|\langle \U \cdot \nabla \ul, v \rangle| = |\langle \U \cdot \nabla v, \ul \rangle|
\leq \|\U\| \|\nabla v\| \|\ul\|_{L^\infty} \leq
C\|\U\| \|\nabla v\| \|\ul\|_{H^2} \leq
C\|\U\| \|\nabla v\| \|\thl\|
\notag \\
&\leq \frac{\nu}{8\eps} \|\nabla v\|^2 + \frac{C\eps}{\nu}
\|\U\|^2  \|\thl\|^2 \,.
\label{eq:3a}
\end{align}
Next observe, again using \eqref{eq:not} and that all of the vector field are divergence free,
\begin{align*}
\langle \B \cdot \nabla \B, v \rangle
- \langle \U\cdot \nabla \B - \B \cdot \nabla \U, f \rangle &=
\langle \B \cdot \nabla f, v \rangle + \langle \B \cdot \nabla \bl, v \rangle
- \langle \U \cdot \nabla \bl - \B \cdot \nabla v - \B \cdot \nabla \ul, f \rangle
\\
&=
 \langle \B \cdot \nabla \bl, v \rangle
- \langle \U \cdot \nabla \bl - \B \cdot \nabla \ul, f \rangle\\
&=
-\langle \B \cdot \nabla v, \bl \rangle
+ \langle \U \cdot \nabla f, \bl \rangle - \langle \B \cdot \nabla f, \ul \rangle
\,,
\end{align*}
and consequently, using again that both $\ul$ and $\bl$ are two degrees smoother than $\thl$,
similarly to \eqref{eq:3a}, we have
\begin{align*}
|\langle \B \cdot \nabla \B, v \rangle
- \langle \U \cdot \nabla \B - \B \cdot \nabla \U, f \rangle|
	&\leq C( \|\B\| \|\nabla v\| \|\bl\|_{H^2} +
		\|\U\| \|\nabla f\| \|\bl\|_{H^2} +
		\|\B\| \|\nabla f\| \|\ul\|_{H^2}) \\
	&\leq C(\|\B\| \|\nabla v\| \|\thl\| +
		\|\U\| \|\nabla f\| \|\thl\| +
		\|\B\| \|\nabla f\| \|\thl\|)
\\
&\leq
\frac{1}{8\delta}  (\nu\|\nabla v\|^2 + \|\nabla f\|^2) +
\frac{C\delta}{\nu \wedge 1} \|\thl\|^2 (\|\U\|^2 + \|\B\|^2 )\,.
\end{align*}
Next observe that the range of $Q^{-1}$ consists of divergence free vector fields, and consequently
\begin{align*}
\langle Q^{-1} P \bfe_3 ( \ul \cdot \nabla \thl), v \rangle = \langle   \ul \cdot \nabla \thl,  \bfe_3 \cdot  Q^{-1} v \rangle = - \langle   \ul \cdot \nabla (\bfe_3 \cdot  Q^{-1} v), \thl \rangle
\end{align*}
Recalling that $Q^{-1}$ provides two degrees of smoothing, and using again $H^2 \hookrightarrow L^\infty$, we thus obtain
\begin{align*}
|\langle Q^{-1} P \bfe_3 (\kappa \Delta \thl - \ul \cdot \nabla \thl), v \rangle|
\leq C \|\theta\| \|v\| + C \|v\| \|\theta\| \|\ul\|_{H^2}
\leq \frac{\nu}{8\eps}  \|\nabla v\|^2  +
\frac{C\eps}{\nu} (\|\thl\|^2 + \|\thl\|^4) \,.
\end{align*}
Analogously since $R$ has also two (in fact three, cf. \eqref{eq:R}) degrees of smoothing,
\begin{align*}
|\langle R (\kappa \Delta \thl
- \ul \cdot \nabla \thl), f \rangle| \leq C\|\thl\| \|f\|  +
C \|f\| \|\thl\|\|u\|_{H^2}
\leq \frac{1}{8\delta}  \|\nabla f\|^2  +
C\delta(\|\theta\|^2 + \|\theta\|^4) \,.
\end{align*}
Finally, we trivially have
\begin{align*}
\|R\sigma\|^2 \leq C \|\sigma\|^2, \qquad \|Q^{-1} P \bfe_3 \sigma \|^2 \leq C \|\sigma\|^2 \,.
\end{align*}
Combining all of the preceding estimates we infer
\begin{multline}
\frac{1}{2}\int_0^{t \wedge \tau} (\nu \|\nabla v\|^2 + \|\nabla f\|^2) \, ds
\leq
\frac{1}{2} ( \eps \|v(0)\|^2
+ \delta \|f(0)\|^2) +
\frac{C}{\nu}\int_0^{t \wedge \tau} \|\phi\|^2 \, ds \\
+
C (\eps^2 + \delta^2) \left(1 + \frac{1}{\nu}\right) \int_0^{t \wedge \tau} \|\thl\|^2
(\|\U\|^2 + \|\B\|^2 + \|\thl\|^2 + 1) \, ds
+
 \frac{t \wedge \tau}{2} \|\sigma\|^2 (\epsilon + \delta) + M_{t\wedge\tau} \,,\label{fes}
\end{multline}
where $M_t$ represents a Martingale term.
Working from \eqref{if:tem} we have
\begin{align*}
\frac{1}{2} \frac{d}{dt}\|\phi\|^2 + \kappa \|\nabla \phi\|^2 \leq
\|v\|_{L^6} \|\nabla \phi\| \|\thl\|_{L^3} \leq
\kappa \|\nabla \phi\|^2 + \frac{C}{\kappa} \|\nabla v\|^2 \|\thl\|_{L^3}^2 \,,
\end{align*}
and therefore
\begin{align}\label{pfe}
\sup_{s \in [0, t \wedge \tau]} \|\phi(s)\|^2
\leq  \| \phi(0) \|^2 + \frac{C}{\kappa} \Big(\sup_{s \in [0, t \wedge \tau]} \|\thl(s)\|^{2}_{L^3}\Big)
\int_{0}^{t \wedge \tau} \|\nabla v\|^2 \, ds \,.
\end{align}
We will now implement a probabilistic modification of the Gr\"{o}nwall inequality.

\begin{Lem}
\label{lem:prob:gron}
Assume that an $\mathcal{F}_{t}$-adapted stochastic process $(\mcx (t))_{t \geq 0}$ satisfies, for each $K>0$,
\begin{equation}\label{jpgnw}
\E \mcx (t \wedge \tau_K) \leq K \mct + C K\int_0^t \E \mcx(s\wedge \tau_K) \, ds +
K^2 C (\eps + \delta)(1 + \|\sigma\|^2 t)  \,,
\end{equation}
where $\mct,\eps,\delta>0$ are constants, and $\{\tau_K\}$ is a collection of stopping times which satisfies
\begin{align*}
\Prb(\tau_K \leq t) \leq C' e^{-\eta K},
\end{align*}
for some $\eta > 0$. Then for any $t>0$ and $\gamma \leq \frac{\eta}{\eta+Ct}$ one has
\begin{equation}
\label{eq:gron:conc}
\E (\mcx (t))^{\gamma} \leq C_1 (\mct^\gamma + (\eps + \delta)^{\gamma}) \,,
\end{equation}
where $C_1=C_1(C',t,\|\sigma\|,\gamma)>0$.
\end{Lem}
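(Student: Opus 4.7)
The plan is to convert the integral inequality \eqref{jpgnw} into a pointwise distributional estimate on $\mcx(t)$ by combining deterministic Gr\"onwall with Markov's inequality and the exponential tail of $\tau_K$, and then recover the $\gamma$-moment bound via the layer-cake formula. The constraint $\gamma \leq \eta/(\eta+Ct)$ will emerge naturally as the precise balance point between two competing exponential rates in $K$. First I would apply the deterministic Gr\"onwall inequality to \eqref{jpgnw}. Since $g(t) := K\mct + K^2 C(\eps+\delta)(1+\|\sigma\|^2 t)$ is non-decreasing in $t$, this yields
\begin{equation*}
\E\mcx(t\wedge\tau_K) \leq g(t)\, e^{CKt} \leq C\, K^2 D\, e^{CKt}, \qquad D := \mct + (\eps+\delta)(1+\|\sigma\|^2 t),
\end{equation*}
for every $K \geq 1$, absorbing harmless prefactors into the generic constant $C$.

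Next, for any $\lambda, K > 0$, the inclusion $\{\mcx(t) > \lambda\} \subseteq \{\mcx(t\wedge\tau_K) > \lambda\} \cup \{\tau_K \leq t\}$ together with Markov's inequality and the hypothesized tail bound yields
\begin{equation*}
\Prb\bigl(\mcx(t) > \lambda\bigr) \leq \frac{C K^2 D\, e^{CKt}}{\lambda} + C' e^{-\eta K}.
\end{equation*}
I would then optimize over $K$: choosing $K(\lambda) \sim \frac{1}{\eta + Ct}\log(\lambda/D)$ equates $e^{CKt}/\lambda$ with $e^{-\eta K}$ and produces (up to a polylog factor in $\lambda/D$ from $K(\lambda)^2$)
\begin{equation*}
\Prb\bigl(\mcx(t) > \lambda\bigr) \leq C_\star \bigl(D/\lambda\bigr)^{\gamma_*}, \qquad \gamma_* := \frac{\eta}{\eta + Ct},
\end{equation*}
valid for $\lambda \geq D\, e^{\eta + Ct}$; for smaller $\lambda$ I would just use the trivial bound $\Prb(\cdot) \leq 1$.

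Finally, I would invoke the layer-cake formula $\E\mcx(t)^\gamma = \gamma \int_0^\infty \lambda^{\gamma - 1} \Prb(\mcx(t) > \lambda)\, d\lambda$ and split at $\lambda_0 := D\, e^{\eta + Ct}$. On $[0, \lambda_0]$ the trivial bound contributes $\lambda_0^\gamma \leq C D^\gamma$. On $[\lambda_0, \infty)$, a change of variables $u = \lambda/D$ recasts the tail contribution as $D^\gamma \int_{e^{\eta+Ct}}^\infty u^{\gamma - \gamma_* - 1}(\log u)^2\, du$, a convergent integral precisely when $\gamma < \gamma_*$. Using the subadditive inequality $(a+b)^\gamma \leq a^\gamma + b^\gamma$ for $\gamma \in (0,1)$,
\begin{equation*}
\E\mcx(t)^\gamma \leq C_1 D^\gamma \leq C_1 \bigl(\mct^\gamma + (\eps+\delta)^\gamma\bigr),
\end{equation*}
with $C_1 = C_1(C', t, \|\sigma\|, \gamma)$, as required.

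The main obstacle is executing the optimization in the second step: I must verify that $\eta/(\eta+Ct)$ arises exactly as the balance point between the Gr\"onwall-driven growth $e^{CKt}$ and the stopping-time decay $e^{-\eta K}$, and keep track of the polylogarithmic losses from $K(\lambda)^2$, which are then absorbed through the change of variables in the final integration. The constant $C_1$ blows up as $\gamma \nearrow \gamma_*$, consistent with the lemma's stated dependence of $C_1$ on $\gamma$.
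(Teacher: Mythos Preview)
Your proof is correct and reaches the same conclusion, but the route differs from the paper's. After the identical first step (deterministic Gr\"onwall applied to \eqref{jpgnw}), the paper does \emph{not} pass through a tail estimate on $\mcx(t)$. Instead it partitions the sample space via the integer-indexed events $\{\tau_{k-1}\le t<\tau_k\}$ and applies H\"older's inequality with exponents $1/\gamma$ and $1/(1-\gamma)$ directly:
\[
\E\bigl(\mcx(t)^{\gamma}\,\mathbbm{1}_{\tau_k>t}\,\mathbbm{1}_{\tau_{k-1}\le t}\bigr)
\le \bigl(\E\,\mcx(t)\mathbbm{1}_{\tau_k>t}\bigr)^{\gamma}\bigl(\Prb(\tau_{k-1}\le t)\bigr)^{1-\gamma},
\]
then sums over $k$. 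The resulting series has general term bounded by $k^{2\gamma}\exp\bigl((\gamma Ct-(1-\gamma)\eta)k\bigr)$, and the convergence condition $\gamma Ct<(1-\gamma)\eta$ is exactly $\gamma<\eta/(\eta+Ct)$. This is shorter: no optimization over a continuous parameter, no polylogarithmic bookkeeping, no layer-cake. Your approach, on the other hand, produces as a byproduct an explicit tail bound $\Prb(\mcx(t)>\lambda)\lesssim (D/\lambda)^{\gamma_*}(\log(\lambda/D))^{2}$, which is a strictly stronger intermediate statement than anything the paper records. So the paper's argument is more economical for the stated goal, while yours extracts more distributional information along the way.
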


We postpone the proof of Lemma \ref{lem:prob:gron} to the end of this section.
To implement Lemma \ref{lem:prob:gron}, we define the stopping times
\begin{align}
\label{eq:stop:def}
\tau_K := \inf_{s \geq 0} \{\|\thl(s)\|^2_{L^3} \geq K \}\,,
\end{align}
for any $K > 0$.
Note that for each $t>0$, and sufficiently small
$0<\eta \leq \eta_1(\kappa,\|\sigma\|_{L^3}, \eta_0)$,
one finds by the Markov inequality, \eqref{eq:mod:ad:1} and
the assumption \eqref{eq:ft:d:a:1},
\begin{align*}
\Prb(\tau_K \leq t) = \Prb(\sup_{s\in[0,t]}\|\thl(s)\|^2_{L^3}\geq K)
\leq e^{-\eta K}\E\exp\Big(\eta \sup_{s\in[0,t]}\|\thl(s)\|^2_{L^3}\Big)
\leq  C e^{-\eta K}\exp(\eta t \|\sigma\|_{L^3}^2).
\end{align*}
With $\tau = \tau_K$, standard energy estimates, \eqref{eq:mod:ad:2}, \eqref{avebuth},  and the assumption \eqref{eq:ft:d:a:1},
we can estimate a term on the right hand side of \eqref{fes} as
\begin{align}
 \E \int_0^{t\wedge \tau_K} \|\thl\|^2 &\left( \|\U\|^2 + \|\B\|^2  + \| \thl \|^2 \right) ds \leq
C K \E \int_0^t \left(\|\nabla \U\|^2 + \|\nabla \B\|^2  + \| \thl \|^2 \right) ds
\notag \\
&\leq
C K
 \E\left(\frac{\eps}{2} \| \U(0)\|^2 + \frac{\delta}{2} \|\B(0) \|^2 + \frac{C}{\kappa \nu}\|\Th(0)\|^{2} + \frac{1}{2\kappa}\| \thl(0)\|^2\right)
 +  CK\|\sigma\|^{2}t
 \notag \\
&\leq
 CK(1 + \|\sigma\|^{2}t) \,.
  \label{eq:gw:est:2}
\end{align}
Hence, by combining \eqref{fes} with \eqref{pfe} and using \eqref{eq:gw:est:2} we find for each $K \geq 1$,
the estimate \eqref{jpgnw} is satisfied, where
\begin{align}
\label{eq:chi}
\mcx (t) := \sup_{s \in [0, t]} \|\phi(s)\|^2\,,
\qquad
\mct := \E\left(\| \phi(0) \|^2 + C ( \eps \|v(0)\|^2 + \delta \|f(0)\|^2)\right) \,.
\end{align}
By Lemma \ref{lem:prob:gron} the estimate \eqref{eq:gron:conc} is satisfied with such $\mcx (t)$ and $\mct$.

We can now prove \eqref{eq:conv:th:f:t} by combining \eqref{eq:gron:conc}, \eqref{eq:mod:ad:2}, Corollary \ref{c:emb}, and the assumption \eqref{eq:ft:d:a:1}.  For any $p>\gamma$, we find
\begin{align}
\label{eq:con4}
\E\sup_{s\in[0,t]}&\|\phi(s)\|^{p} \leq C\E\sup_{s\in[0,t]}\|\phi(s)\|^{\gamma}(\|\Th(s)\|^{p-\gamma} + \|\thl(s)\|^{p-\gamma}) \notag \\
&\leq
C\Big(\E\big(\sup_{s\in[0,t]}\|\phi(s)\|^{2\gamma}\big)\Big)^{1/2}\left[\Big(\E \sup_{s\in[0,t]}\|\Th(s)\|^{2 (p-\gamma) }\Big)^{1/2}
+ \Big(\E \sup_{s\in[0,t]}\|\thl(s)\|^{2(p-\gamma)}
\Big)^{1/2}\right] \notag \\
&\leq C\Big(\E\big(\sup_{s\in[0,t]}\|\phi(s)\|^{2\gamma}\big)\Big)^{1/2}\left[\Big(\E\exp\big(\eta\sup_{s\in[0,t]}\|\Th(s)\|^{2}\big)\Big)^{1/2}
+ \Big(\E\exp\big(\eta\sup_{s\in[0,t]}\|\thl(s)\|^{2}\big)\Big)^{1/2}\right] \notag \\
&\leq C\exp\big(\eta t\|\sigma\|_{L^3}^{2}\big)
\Big[\mct^{\gamma} + (\eps + \delta)^{\gamma} \Big]^{1/2} \,.
\end{align}

Having established \eqref{eq:conv:th:f:t}, it remains to prove \eqref{eq:conv:ub:f:t}.
Note that from \eqref{fes} we have
\begin{align}
\label{eq:con3}
\E\int_{0}^{t}(\nu\|\nabla v\|^2 + &\|\nabla f\|^{2})ds \leq \E(\eps\|v(0)\|^{2} + \delta\|f(0)\|^{2}) + Ct\E\sup_{s\in[0,t]}\|\phi(s)\|^{2}  \notag \\
& + C(\eps^2 + \delta^2)\Big(\E\big(\sup_{s\in[0,t]}\|\thl(s)\|^2\big)^{2}\Big)^{1/2}
\left(\E\Big(\int_{0}^{t}(\|\U\|^2 + \|\B\|^2 + \|\thl\|^2)ds\Big)^{2}\right)^{1/2} \notag \\
& + C(\eps^2 + \delta^2)\E\int_{0}^{t}\|\thl\|^2 ds + (\eps + \delta)C\|\sigma\|^{2}t.
\end{align}

Combining \eqref{eq:conv:th:f:t} and \eqref{eq:con3}, and once more invoking \eqref{eq:mod:ad:2}, Corollary \ref{c:emb}, and \eqref{eq:ft:d:a:1}, we find
\begin{align}
\label{eq:con5}
\E\int_{0}^{t}(\nu\|\nabla v\|^2 + \|\nabla f\|^{2})ds &\leq \E(\eps\|v(0)\|^{2} + \delta\|f(0)\|^{2})
 + Ct \exp\big(\eta t\|\sigma\|_{L^3}^{2}\big)
\left[ \mct^\gamma +  (\eps + \delta)^{\gamma}\right]^{1/2}
\notag \\
& + C(\eps^2 + \delta^2)\exp\big(\eta t\|\sigma\|^{2}\big) + C(\eps + \delta)\|\sigma\|^{2}t,
\end{align}
completing the proof of \eqref{eq:conv:ub:f:t}, and thus of Theorem \ref{thm:f:t:eod}.
\end{proof}

\begin{proof}[Proof of Lemma \ref{lem:prob:gron}]
For fixed $K>0$, applying the Gr\"onwall lemma to \eqref{jpgnw}, we have
\begin{equation}
\label{eq:con1}
\E\Big(\mcx(t)\mathbbm{1}_{\tau_K>t}\Big)
\leq  \E \mcx(t \wedge \tau_K)
\leq \Big(K \mct + CK^2(\eps + \delta)(1 + \|\sigma\|^{2}t)\Big)
\exp(C K t).
\end{equation}
Then, for any $0<\gamma<1$,
\begin{align}
\label{eq:con2}
\E (\mcx(t))^{\gamma}  &=
\sum_{k=1}^{\infty}\E\Big((\mcx(t))^{\gamma}
\mathbbm{1}_{\tau_{k} > t}  \mathbbm{1}_{\tau_{k-1} \leq t}
\Big)  \leq \sum_{k=1}^{\infty}\Big(\E\Big(\mcx(t) \mathbbm{1}_{\tau_{k} > t}\Big)\Big)^{\gamma}
(\Prb(\tau_{k-1} \leq t))^{1-\gamma} \notag \\
&\leq
(C')^{1- \gamma}
\sum_{k=1}^{\infty}
\Big[k \mct + Ck^2(\eps + \delta)(1 + \|\sigma\|^{2}t)
\Big]^{\gamma}\exp(\gamma C  t k -(1-\gamma)\eta k) \notag \\
&\leq
C_1(\mct^\gamma + (\eps + \delta)^\gamma)\,,
\end{align}
where we have assumed in the last line
$\gamma<\frac{\eta}{\eta+Ct}$ to guarantee convergence of
the series.
\end{proof}

\begin{Rmk}\label{rmk:cor:anal}  We note that the formal limit
system \eqref{eq:AS:th}--\eqref{eq:AS:mag} does not well approximate
\eqref{eq:MHD:vel}--\eqref{eq:MHD:temp} in its velocity and magnetic
components at time $t =0$.  This is due to the singular nature of the limit as $\eps, \delta \to 0$
and is reflected in the phase space mismatch between the two systems.   Such considerations are
behind the derivation and analysis of the so called `corrector systems' for analogous
singular perturbation problems arising in large Prandtl number convection carried out in \cite{Wang2004a, FoldesGlattHoltzRichards2015}.

To obtain a more accurate approximation for \eqref{eq:MHD:vel}--\eqref{eq:MHD:temp}
in our situation we may expand solutions in formal asymptotic series involving multiple time scales
for instance following the method of inner and outer expansions as in e.g. \cite{OMalley1974,KabanovPergamenshchikov2003}.
If $\delta \ll \eps \ll 1$ this leads to a two stage approximation.
By first treating $\delta \approx 0$ in comparison to $\eps$ we obtain
\begin{align*}
\uc &= e^{-t\OQ/\eps}\U(0) + \OQ^{-1} \bfe_3 \thc(t) - e^{-t  \OQ/\eps}\OQ^{-1}\bfe_3\Th(0),	
	\\
\bc &= (-\Delta)^{-1}(\BBB \cdot \nabla) \uc,	
	\\
d\thc &= (- \uc \cdot \nabla \thc + \kappa \Delta \thc)dt + \sigma dW.	
\end{align*}
By next treating $\eps\sim 1$, in comparison to $\delta$ we find
\begin{align}
\eps(\pd{t}\ucc + \uc \cdot \nabla \ucc) &= - A \ucc + \BBB \cdot \nabla \bcc + \bfe_3 \thcc  \label{eq:cor:5}
\\
\bcc(t) &= e^{t\Delta /\del}\B(0) + (-\Delta)^{-1}\BBB \cdot \nabla \ucc(t) - e^{t\Delta /\del}(-\Delta)^{-1} \BBB \cdot \nabla \U(0),
\label{eq:cor:6}
\\
d\thcc  &= (- \ucc \cdot \nabla \thcc + \kappa \Delta \thcc)dt + \sigma dW.
\label{eq:cor:7}
\end{align}
This collection of equations provides a well posed approximation of \eqref{eq:MHD:vel}--\eqref{eq:MHD:temp}.
The accuracy of these approximations can be rigorously established using variations on the analysis carried out in the proof of Theorem~\ref{thm:f:t:eod}
(and see also  \cite{FoldesGlattHoltzRichards2015}).
Note also that analogous approximate systems can be found in the cases $\eps \ll \delta \ll 1$ and when $\eps \sim \delta$.
In any case such approximate equations are not required for our analysis below which focuses on long time asymptotics, and
we omit further details.

\end{Rmk}

\section{Analysis of the Markovian Dynamics of the Formal Limit System}
\label{sec:lim}

In this section we establish conditions guaranteeing the contractive property \eqref{eq:cont}, see point (ii) in the summary of main results.
Recall that \eqref{eq:cont} implies that the limit equation \eqref{eq:AS:th} has at most one invariant measure
which is thus ergodic.  Moreover, this contractive property plays a crucial role in Section~\ref{sec:conv}, where it is used
to establish the convergence of statistically stationary states for \eqref{eq:MHD:vel}--\eqref{eq:MHD:temp} as $\eps,\delta \rightarrow 0$.

As we have already noted in the introduction associating the contraction \eqref{eq:cont} with a version of the H\"{o}rmander bracket condition
does not require any significant new ideas in comparison to \cite{HairerMattingly2008}.  Rather our contribution
here is to show that H\"{o}rmander's condition holds for \eqref{eq:AS:th} with some specific noise configurations.
Indeed, in view of the complex nature of the constitutive law \eqref{eq:smb:u1}--\eqref{eq:smb:u2} this verification requires a non-trivial analysis.

Before proceeding to the main results in the section, we first recall some preliminaries related to the
H\"{o}rmander bracket condition and to Wasserstein metrics.

\subsection*{H\"{o}rmander's Condition}

In our setting and in view of \cite{HairerMattingly06, HairerMattingly2008} (see also \cite{Doob1948, Khasminskii1960}),
the proof of \eqref{eq:cont} reduces to establishing smoothing properties of the Markov semigroup $\{P_t\}_{t\geq 0}$
 associated to \eqref{eq:AS:th}.
Following \cite{HairerMattingly06} and in the sprit of  \cite{Hormander1967} this leads to
a H\"{o}rmander-type bracket condition as follows.

Define, for each $k \in \ZZ^3$ and $x \in \TT^3$,
\begin{align*}
\sigma_k^0(x) := \cos (k\cdot x), \qquad \sigma_k^1(x) := \sin (k\cdot x),
\end{align*}
to be the eigenfunctions of the Laplacian $\TT^3$, and recall that
\begin{align*}
   \sigma dW = \sum_{k \in \ZZ_{+}^3, m\in\{ 0,1\}} \alpha_{k, m} \sigma_k^m dW^{k,m} \,,
\end{align*}
where $\ZZ_{+}^{3}=\{k = (k_1,k_2,k_3) \in \ZZ^{3}:k\neq 0, k_1\geq 0\}$ is the integer lattice half-space
and $\alpha_{k, m} \in \RR$.
Let
\begin{align}
  \mathcal{W}_0 &:= \text{span}\{ \sigma_j^m :  \alpha_{j, m} \not= 0  \}, \notag\\
  \mathcal{W}_n &:= \text{span}\Big\{\{ [[F, \psi], \sigma] = M_u(\psi) \cdot \nabla \sigma + M_u(\sigma) \cdot \nabla \psi :
  					\psi \in \mathcal{W}_{n-1}, \sigma \in \mathcal{W}_0\}
  				\cup \mathcal{W}_{n-1}\Big\}, \label{eq:nth:Hor:space}
\end{align}
where
\begin{equation} \label{f:def}
F(\thl) =  - \kappa \Delta \theta + M_u(\theta) \cdot \nabla \theta,
\end{equation}
and where, for any Fr\' echet differentiable functions (vector fields) $E_1, E_2 : H \to H$,
one defines the Lie bracket $[E_1, E_2] : H \to H$ as
\begin{equation}\label{eq:brak:def}
[E_1, E_2](\theta) = \nabla E_2(\theta) E_1(\theta) - \nabla E_1(\theta) E_2(\theta) \,.
\end{equation}

The H\"ormander-type condition is now stated as follows:
\begin{align}
  \textrm{For every } N > 0 \textrm{ there exists an } n = n(N) \textrm{ such that }  \mathcal{W}_n \supseteq H_N \,,
  \label{eq:Hormander:easy}
\end{align}
where $H_N = \text{span}\{ \sigma_{k}^{m}:k\in \ZZ_{+}^{3},|k|\leq N,m\in\{0,1\}\}$.

\subsection*{Wasserstein Metrics}

In order to clarify how \eqref{eq:Hormander:easy} implies contractivity in the Markovian dynamics
associated to \eqref{eq:AS:th}, we proceed to discuss a few generalities about Wasserstein metrics.
Recall that, given a metric space $(X, \mathfrak{m})$, the associated Wasserstein distance $\mathfrak{W}_{\mathfrak{m}}$
on $\text{Pr}(X)$ is defined by the following equivalent formulations:
\begin{align}\label{eq:equivalent}
  \WW_{\mathfrak{m}}(\mu, \nu) = \inf_{\Gamma \in \mathcal{C}(\mu, \nu)} \int  \mathfrak{m}(u, v) d \Gamma(u,v)
  = \sup_{\| \phi\|_{\text{Lip},  \mathfrak{m}} \leq 1} \left|\int \phi d\mu - \int \phi d \nu \right| \,.
\end{align}
See e.g. \cite{Villani2008}. Here $\mathcal{C}(\mu, \nu)$ denotes the set of couplings between $\mu$ and $\nu$, namely
\begin{align*}
  \mathcal{C}(\mu, \nu) = \{ \Gamma \in \text{Pr}(X \times X): \Gamma(A \times X) = \mu(A),  \Gamma(X \times B) = \nu (B)
  \textrm{ for any } A, B \in \mathcal{B}(X)\},
\end{align*}
and
\begin{align}\label{eq:Lip}
  \| \phi\|_{\text{Lip},  \mathfrak{m}} = \sup_{u \not = v} \frac{|\phi(u) - \phi(v)|}{  \mathfrak{m}(u,v)},
\end{align}
is the Lipschitz semi-norm corresponding to $(X, \mathfrak{m})$.

In our setting we consider the metric $\rho = \rho_{\eta}$ on the phase space $H' = L^2(\TT^3)$ as
\begin{align}\label{eq:m}
   \rho (\theta, \psi) = \inf_{p \in P(\theta, \psi)} \int_0^1 \exp( \eta \| p\|^2) \| p'\| dt
\end{align}
for a suitable $\eta > 0$, where $P(\theta, \psi) = \{ p \in C^1([0,1], H'): p(0) =\theta, p(1) =\psi\}$.  Notice that
\begin{align}
   \| \theta - \psi\|   \leq  \rho (\theta, \psi)  \leq \exp( 2\eta (\|\theta \|^2 + \| \psi  \|^2))  \| \theta - \psi\|.
   \label{eq:semi:eq:rho}
\end{align}

\subsection*{Criteria for Contractivity}
Invoking the abstract machinery developed in \cite{HairerMattingly2008,HairerMattingly2011}, we now
establish contractivity for the Markovian dynamics whenever H\"{o}rmander's condition \eqref{eq:Hormander:easy}
is satisfied.

\begin{Thm}\label{thm:c:w}
 Let $\{P_t\}_{t \geq 0}$ be the Markov semigroup  associated to \eqref{eq:AS:th} and assume that the H\"ormander
 condition  \eqref{eq:Hormander:easy} holds.
 Then $\{P_t\}_{t \geq 0}$ is contractive in $\WW_{\rho}$:
\begin{align}
  \WW_{\rho}(\mu_1 P_t, \mu_2 P_t) \leq C e^{- \gamp t}
  \WW_{\rho}(\mu_1, \mu_2)  \quad \text{ for every } t \geq 0\,,
  \label{eq:distance:contraction:cond}
\end{align}
and hence $\{P_t\}_{t \geq 0}$ possesses a unique ergodic invariant measure.  Here the metric $\rho$ is defined relative
to an $0< \eta \leq \eta_0(\|\sigma\|, \kappa, \nu)$.  This number $\eta_0$ and
the constants $C = C(\|\sigma\|, \kappa, \nu)> 0$, $\gamp = \gamp(\|\sigma\|, \kappa, \nu)>$ are all independent
of $t \geq 0$ and $\mu_1, \mu_2$.
\end{Thm}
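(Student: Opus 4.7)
The plan is to invoke the abstract framework of Hairer--Mattingly \cite{HairerMattingly2008, HairerMattingly2011}, which reduces the verification of \eqref{eq:distance:contraction:cond} to three ingredients: (a) a Lyapunov structure controlling the dissipation and moments of $\theta$, (b) a gradient bound for $P_{t}$ of asymptotic strong Feller type, and (c) the H\"ormander spanning property \eqref{eq:Hormander:easy}. Ingredient (c) is assumed, so the task is to verify (a) and (b) and then assemble the pieces.

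First I would establish the Lyapunov/moment structure. Since the noise is additive and the linear dissipation $-\kappa\Delta$ is coercive on mean-zero functions, standard It\^o/energy arguments (as in Lemma~\ref{lem:mom:bnds:1} and \eqref{eq:mod:ad:1}--\eqref{eq:mod:ad:3}) yield the exponential-moment bound
\begin{equation*}
  \E \exp\bigl(\eta \|\theta(t,\theta_{0})\|^{2}\bigr) \leq C\bigl(1 + e^{-\alpha t}\exp(\eta\|\theta_{0}\|^{2})\bigr),
\end{equation*}
for some $\eta=\eta_{0}(\|\sigma\|,\kappa,\nu)>0$ and $\alpha>0$. This furnishes both the Foster--Lyapunov-type drift condition and justifies the use of the metric $\rho$ in \eqref{eq:m} with $\eta\le\eta_{0}$, since $\rho$ is calibrated precisely so that $\int e^{\eta\|\theta\|^{2}}d\mu(\theta)<\infty$ suffices to control $\WW_{\rho}$.

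Second, and this is the main analytic step, I would derive the gradient inequality
\begin{equation*}
  \|\nabla P_{t}\phi(\theta)\| \leq C(\theta)\bigl(\|\phi\|_{\infty} + e^{-\alpha t}\|\nabla \phi\|_{\infty}\bigr)
\end{equation*}
for smooth $\phi$, which is the infinite-dimensional asymptotic strong Feller property from \cite{HairerMattingly06}. This is obtained by writing the Jacobian $J_{0,t}\xi$ of the flow along a perturbation $\xi$ and controlling it via the Malliavin matrix $\mathcal{M}_{t}$ associated to \eqref{eq:AS:th}. One constructs an approximate control $v\in L^{2}(0,t;\mathcal{W}_{0})$ via $v = \mathcal{M}_{t}^{-1}J_{0,t}\xi$ (regularized), so that the Bismut--Elworthy--Li-type identity
\begin{equation*}
  \nabla P_{t}\phi(\theta)\cdot\xi = \E\bigl[\phi(\theta(t))\textstyle\int_{0}^{t}v\,dW\bigr] + \E\bigl[\nabla\phi(\theta(t))\cdot \rho_{t}\bigr],
\end{equation*}
with a residual $\rho_{t}$ decaying exponentially, yields the required bound. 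Invertibility of $\mathcal{M}_{t}$ in the quantitative sense (a spectral lower bound of Norris--Hairer--Mattingly type) is precisely where the H\"ormander condition \eqref{eq:Hormander:easy} enters: the iterated brackets in $\mathcal{W}_{n}$ span every low-frequency block $H_{N}$, which forces $\mathcal{M}_{t}$ to be nondegenerate on $H_{N}$ with quantitative estimates that depend only on $N$ and the Lyapunov functional.

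The hard part is step two: executing the Malliavin-matrix argument rigorously in this infinite-dimensional, non-elliptic setting, including verifying the regularity of the nonlinearity $M_{u}(\theta)\cdot\nabla\theta$ (here the two-degree smoothing of $M$ from \eqref{eq:smb:u1}--\eqref{eq:smb:u2} and the cancellations $\langle u\cdot\nabla\theta,\theta\rangle=0$ will be essential) and the high-frequency tail estimate that reduces control of $\mathcal{M}_{t}^{-1}$ on all of $H'$ to control on $H_{N}$ for large $N$. I expect these verifications to proceed along the lines of \cite{HairerMattingly2011,FoldesGlattHoltzRichardsThomann2013}, with the explicit symbols in \eqref{eq:smb:u1}--\eqref{eq:smb:u2} used to obtain the necessary continuity and smoothing properties of the drift. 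Once the gradient inequality is established, the contraction \eqref{eq:distance:contraction:cond} follows from Theorem 2.4 of \cite{HairerMattingly2011} applied to the metric $\rho$, using \eqref{eq:semi:eq:rho} to translate between the $L^{2}$-topology and $\rho$. Uniqueness and ergodicity of the invariant measure are then immediate: any two invariant measures $\mu_{1},\mu_{2}$ satisfy $\WW_{\rho}(\mu_{1},\mu_{2})=\WW_{\rho}(\mu_{1}P_{t},\mu_{2}P_{t})\le Ce^{-\gamp t}\WW_{\rho}(\mu_{1},\mu_{2})$, forcing $\WW_{\rho}(\mu_{1},\mu_{2})=0$ and hence $\mu_{1}=\mu_{2}$, while existence follows from Krylov--Bogolyubov as noted after \eqref{eq:stat:exp:lim}.
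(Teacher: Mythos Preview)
Your approach is essentially the same as the paper's: both invoke the abstract Hairer--Mattingly machinery, verify the asymptotic strong Feller gradient bound via \cite[Theorem~8.4]{HairerMattingly2011} using the H\"ormander condition together with the moment estimates of Lemmata~\ref{lem:mom:bnds:1}--\ref{lem:mom:lin:2}, and conclude contraction, uniqueness, and ergodicity, with existence via Krylov--Bogolyubov.

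There is one genuine omission, however. The abstract result you are invoking (\cite[Theorem~3.4]{HairerMattingly2008}) requires \emph{three} hypotheses at the level of the semigroup: the gradient/ASF bound, the Lyapunov/moment structure, \emph{and} a form of irreducibility (topological transitivity toward a common point). Your list (a)--(c) collapses the H\"ormander condition into the ASF step, which is fine, but then never supplies the irreducibility ingredient. Without it the ASF bound and Lyapunov structure alone do not force contraction: one could in principle have disjoint ergodic components on each of which the dynamics smooths. The paper closes this gap by observing that zero is a globally stable fixed point of the unforced dynamics \eqref{eq:AS:th}, so any two solutions can be driven arbitrarily close with positive probability (cf.\ \cite{EMattingly2001, ConstantinGlattHoltzVicol2013}). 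You should add this verification; it is short, but it is not optional.
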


\begin{Rmk}
Under the conditions of Theorem~\ref{thm:c:w} the unique invariant measure $\mu$
satisfies certain additional attraction properties: $\mu$ is exponentially mixing, and it obeys a strong law of large numbers and a central limit theorem.
See e.g. \cite{FriedlanderGlattHoltzVicol2014} for more precise analogous statements.
\end{Rmk}

\begin{proof}
The proof essentially follows ideas for the 2D Navier-Stokes equations
in vorticity formulation developed in \cite{HairerMattingly06,HairerMattingly2008,HairerMattingly2011}, so we will be sparing in details.
Indeed, while we are working in 3D, the constitutive law in \eqref{eq:AS:th}--\eqref{eq:AS:mag}
(that is, the functional $M$, cf. \eqref{eq:smb:u1}) has two degrees of smoothing
as compared to the one degree in the Biot-Sawart law.

The bound \eqref{eq:distance:contraction:cond} follows directly from
\cite[Theorem 3.4]{HairerMattingly2008} which requires us to verify three conditions.
The first condition, and the most difficult step in our setting, is a bound for $ \nabla P_t \phi (u)$ closely related to the
asymptotic strong Feller condition and is thus a form of smoothing. This property
follows from \cite[Theorem 8.4]{HairerMattingly2011} by invoking \eqref{eq:Hormander:easy} along with the
Lemmata \ref{lem:mom:bnds:1}--\ref{lem:mom:lin:2} established below.  Note that this is the only
point in the argument where we make use of \eqref{eq:Hormander:easy}.

The second and third conditions needed for \cite[Theorem 3.4]{HairerMattingly2008} are
a form of irreducibility and certain exponential moment bounds. The irreducibility condition
is straightforward in our setting due essentially to the fact that zero is a stable fixed point of the unforced dynamics.
See e.g. \cite{EMattingly2001,ConstantinGlattHoltzVicol2013} for further details. The second moment condition holds due to the
Lemmata \ref{lem:mom:bnds:1}--\ref{lem:mom:lin:2} below.    With
\eqref{eq:distance:contraction:cond} verified, we immediately infer the uniqueness and ergodicity
of invariant measures.  A standard application of the Krylov-Bogoliubov averaging procedure
and compactness yields the existence of invariant measures and thus the proof is now complete.

\end{proof}

\subsection*{Lie Bracket Computations}

We now demonstrate one situation in which \eqref{eq:Hormander:easy} is satisfied, but a variety of other configurations can be
handled with the approach given below.

\begin{Thm}\label{lem:hbc}
The H\" ormander bracket condition \eqref{eq:Hormander:easy}
holds if $\sigma_k^m \in \mathcal{W}_0$, or equivalently $\alpha_{k,m}\neq 0$ for all $k \in \{\bfe_1, \bfe_2, \bfe_3\}$, $m \in \{0, 1\}$.
\end{Thm}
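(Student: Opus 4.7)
The plan is to work in Fourier space using the complexified basis $e_k(x) := e^{ik\cdot x}$ for $k \in \ZZ^3_0$, reducing \eqref{eq:Hormander:easy} to a connectivity statement on the integer lattice. First I would carry out the Lie bracket computation: starting from the drift $F$ in \eqref{f:def} and using \eqref{eq:brak:def}, together with the linearity of $M_u$ and the fact that constant vector fields have vanishing gradient, a direct calculation produces
\begin{equation*}
  [[F, e_j], e_\ell] = i\,c(j, \ell)\, e_{j+\ell}, \qquad c(j,\ell) := \MMM_u(j)\cdot \ell + \MMM_u(\ell)\cdot j.
\end{equation*}
Since $\MMM_u(k)\cdot k = 0$ for all $k$ (both terms in \eqref{eq:smb:u1} are orthogonal to $k$), this rewrites as $c(j,\ell) = \bigl(\MMM_u(j)+\MMM_u(\ell)\bigr)\cdot(j+\ell)$. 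The hypothesis ensures that $\mathcal{W}_0\otimes\mathbb{C}$ contains $e_k$ for every $k \in \{\pm\bfe_1, \pm\bfe_2, \pm\bfe_3\}$. Iterating via \eqref{eq:nth:Hor:space}, $\mathcal{W}_n\otimes\mathbb{C}$ is spanned by $\{e_k\}_{k\in\mathcal{K}_n}$, where $\mathcal{K}_n$ is the set of all $k = j+\ell$ with $j \in \mathcal{K}_{n-1}$, $\ell\in\{\pm\bfe_i\}$, and $c(j,\ell)\neq 0$; the H\"ormander condition \eqref{eq:Hormander:easy} is then equivalent to $\bigcup_n \mathcal{K}_n = \ZZ^3_0$.

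The next step is an explicit analysis of $c(k, \pm\bfe_i)$ based on \eqref{eq:smb:u1}--\eqref{eq:smb:u2}. Using the identity $k\times(\bfe_3\times k) = |k|^2 \bfe_3 - (\bfe_3\cdot k)k$, one obtains
\begin{equation*}
\MMM_u(k)\cdot v = \frac{1}{D(k)}\Big[\bigl(\nu|k|^4 + (\BBB\cdot k)^2\bigr)\bigl(|k|^2(\bfe_3\cdot v) - (\bfe_3\cdot k)(k\cdot v)\bigr) + (\Omm\cdot k)|k|^2(\bfe_3\times k)\cdot v\Big].
\end{equation*}
From this I would verify that, for each fixed $i\in\{1,2,3\}$, the map $k \mapsto c(k,\bfe_i)$ is a nonzero rational function of $k$ on $\RR^3$; the numerator is a nontrivial polynomial, which one checks by plugging in a concrete lattice vector. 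Consequently the lattice zero set $Z_i\subset\ZZ^3_0$ is sparse, being contained in a proper algebraic hypersurface.

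To conclude I would proceed by induction on $\max_j |k_j|$: for each target $k\in\ZZ^3_0$, construct an admissible lattice path $k_0,k_1,\ldots,k_N = k$ with $k_0\in\{\pm\bfe_i\}$, $k_{j+1} - k_j \in \{\pm\bfe_i\}$, and $c(k_j, k_{j+1}-k_j)\neq 0$ at every step, which yields $k \in \mathcal{K}_N$. The main obstacle is the case analysis of the zero sets $Z_1, Z_2, Z_3$: unlike the 2D Navier--Stokes setting of \cite{HairerMattingly2008}, the symbol here involves the three privileged directions $\BBB$, $\Omm$, and $\bfe_3$, producing more intricate loci. The key verification is that the $Z_i$ cannot simultaneously block every outgoing unit edge from any lattice point; I expect this to follow from a dimension count (six admissible directions against three lower-dimensional zero sets), together with a bounded detour at each obstructed vertex that increases the bracket depth by only a finite amount.
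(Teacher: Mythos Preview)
Your reduction to a lattice connectivity problem via the bracket computation is sound and essentially matches the paper's Lemma~\ref{lem:one:stp}; the only cosmetic difference is that you work in the complex exponential basis, so your one-step condition is $c(k,\ell)\neq 0$ for the single move $k\to k+\ell$, whereas the paper uses the real trigonometric basis and records the matrix condition $|\MMM_u(k)\cdot j|\neq|\MMM_u(j)\cdot k|$ (both sum and difference nonzero), yielding $k\pm j$ simultaneously. These are equivalent frameworks.

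The genuine gap is in the connectivity argument itself. Your proposed mechanism --- ``$Z_i$ lies in a proper algebraic hypersurface, six admissible directions versus three lower-dimensional zero sets, hence bounded detours suffice'' --- is a heuristic, not a proof. Even granting that at every lattice point some outgoing edge survives, this only says the directed graph has no sinks; it does \emph{not} imply that every vertex is reachable from the seed set $\{\pm\bfe_i\}$. For an induction on $\max_j|k_j|$ you would need, at each target $k$, some neighbour of strictly smaller norm from which the edge \emph{into} $k$ is admissible, and your dimension count does not supply this. The zero loci here are genuinely intricate (three privileged directions $\BBB,\Omm,\bfe_3$ enter the symbol), and without explicit control one cannot rule out that the inadmissible edges conspire to isolate regions of the lattice.

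The paper circumvents any generic argument by exploiting the explicit form of \eqref{eq:smb:u1}--\eqref{eq:smb:u2}. The key observations are: (a) the $\bfe_3$-step condition reduces simply to $k_1^2+k_2^2\neq 0$, so one can move freely along $\bfe_3$ once off that axis; (b) the $\bfe_1$- and $\bfe_2$-step conditions (your $c(k,\pm\bfe_1),c(k,\pm\bfe_2)\neq 0$), though messy, compare a degree-5 polynomial in $k_3$ against a degree-9 one, and therefore hold automatically for all $|k_1|,|k_2|\leq N$ once $k_3\geq K(N)$. The strategy is then: start at $(1,0,0)$, ascend in $k_3$ to height $K$, move freely in $k_1,k_2$ across the slab $\{k_3=K\}$, then descend in $k_3$ to fill the box $\{|k_i|\leq N\}\setminus\operatorname{span}\{\bfe_3\}$; finally reach the $\bfe_3$-axis by a separate one-step argument from $(\pm 1,0,l)$. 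This is the missing idea in your proposal.
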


A preliminary step in the proof of Theorem~\ref{lem:hbc} is to establish an iterative process for producing new directions
 that generates the subspaces $\mathcal{W}_n$ specified by \eqref{eq:nth:Hor:space}.
\begin{Lem}\label{lem:one:stp}
\mbox{}
\begin{itemize}
\item[(i)] Suppose that $\sigma_{j}^{m'} \in \mathcal{W}_0$ and
$\sigma_k^m \in \mathcal{W}_n$ for some $j, k \in \ZZ^3_{+}$ and all $m,m' \in \{0, 1\}$.
Then $\sigma_{k + j}^{m}, \sigma_{k - j}^{m} \in \mathcal{W}_{n+1}$ for $m \in \{0, 1\}$ whenever
\begin{align}
|\MMM_u(k) \cdot j| \neq |\MMM_u(j) \cdot k|.
\label{eq:g:n:dir:cond}
\end{align}
\item[(ii)]  Assume that $\sigma_{\bfe_1}^{m}\in \mathcal{W}_0$ and $\sigma_k^{m'} \in \mathcal{W}_{n}$ for some $k = (k_1,k_2, k_3) \in \ZZ^3_{+}$ and all $m, m' \in \{0, 1\}$.  Then  $\sigma_{k \pm \bfe_1}^{m} \in \mathcal{W}_{n+1}$ for $m\in\{0,1\}$, provided
\begin{align}
 | k_{2} (\Omm \cdot k)|k|^2  +  k_1k_3((\BBB \cdot k)^2 + \nu |k|^4)| \neq \frac{|k_3| D(k)}{\nu + (\BBB)_1^2}
 \label{cdt:e1}
\end{align}
\item[(iii)]
Similarly if $\sigma_{\bfe_2}^{m}\in \mathcal{W}_0$ and $\sigma_k^{m'} \in \mathcal{W}_{n}$ for $m, m' \in \{0,1\}$ then $\sigma_{k \pm \bfe_2}^{m'} \in \mathcal{W}_{n+1}$ for $m'\in\{0,1\}$ if
\begin{align}
 |-k_{1} (\Omm \cdot k)|k|^2  +  k_2k_3((\BBB \cdot k)^2 + \nu |k|^4)| \neq \frac{D(k)}{(\nu + (\BBB)_2^2)^2 + \Omm_2^2} |(\nu + (\BBB)_2^2)k_3 - \Omm_2 k_1|
 \label{cdt:e2}
\end{align}
\item[(iv)] Finally whenever $\sigma_{\bfe_3}^{m}\in \mathcal{W}_0$ and $\sigma_k^{m'} \in \mathcal{W}_{n}$ for $m, m' \in \{0,1\}$,
then $\sigma_{k \pm \bfe_3}^{m'} \in \mathcal{W}_{n+1}$ for $m' \in \{0,1\}$ as long as
\begin{equation}
\label{cdt:e3}
k_1^2 + k_2^2 \neq 0.
\end{equation}
\end{itemize}
\end{Lem}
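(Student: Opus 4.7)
The plan is to directly compute the bracket expression $[[F,\psi],\sigma] = M_u(\psi)\cdot \nabla \sigma + M_u(\sigma)\cdot \nabla \psi$ in terms of Fourier modes, exploiting the fact that $\mathcal{M}_u$ in \eqref{eq:smb:u1}--\eqref{eq:smb:u2} is a real-valued vector symbol, so that $M_u(\sigma_k^m) = \mathcal{M}_u(k)\, \sigma_k^m$ for $m\in\{0,1\}$. All four items then reduce to elementary trigonometric identities together with explicit algebra using the symbols at specific wave numbers.

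For part (i), setting $\psi = \sigma_k^m$ and $\sigma = \sigma_j^{m'}$ with $m,m'\in\{0,1\}$, one expands the bracket using $\nabla \sigma_k^0 = -k\,\sigma_k^1$ and $\nabla \sigma_k^1 = k\,\sigma_k^0$ together with the product-to-sum formulas $\cos(a)\sin(b) = \tfrac12[\sin(a+b)-\sin(a-b)]$ and $\cos(a)\cos(b) = \tfrac12[\cos(a+b)+\cos(a-b)]$. For each choice of $(m,m')$ the result is a linear combination of $\sigma^{m''}_{k+j}$ and $\sigma^{m''}_{k-j}$ (with $m''$ determined by the parities of $m,m'$), and the coefficients are precisely $(\mathcal{M}_u(k)\cdot j) \pm (\mathcal{M}_u(j)\cdot k)$. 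The four outputs coming from the four choices of $(m,m')$ thus determine, for each $m''\in\{0,1\}$, a $2\times 2$ linear system for $\sigma^{m''}_{k+j}$ and $\sigma^{m''}_{k-j}$ whose determinant is (a multiple of) $(\mathcal{M}_u(k)\cdot j)^2 - (\mathcal{M}_u(j)\cdot k)^2$. Invertibility is therefore equivalent to \eqref{eq:g:n:dir:cond}, which together with $\sigma_k^{m} \in \mathcal{W}_n$ and $\sigma_j^{m'}\in \mathcal{W}_0$ places $\sigma_{k\pm j}^{m''} \in \mathcal{W}_{n+1}$.

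Parts (ii)--(iv) reduce to part (i) by specializing $j = \hat e_i$ and explicitly evaluating the condition \eqref{eq:g:n:dir:cond} using the formulas
\[
\hat e_3\times k = (-k_2,k_1,0), \qquad k\times(\hat e_3\times k) = (-k_1k_3,-k_2k_3,k_1^2+k_2^2),
\]
which give the $\hat e_i$-components of $\mathcal{M}_u(k)$. For $j=\hat e_1$ and $j=\hat e_2$, the reciprocal direction $\mathcal{M}_u(\hat e_i)$ is computed from \eqref{eq:smb:u1}--\eqref{eq:smb:u2} by plugging in $|k|=1$ and noting $\hat e_3\times \hat e_1 = \hat e_2$, $\hat e_3\times \hat e_2 = -\hat e_1$; the denominators $D(\hat e_1)=\Omega_1^2 + (\nu+(\hat B_0)_1^2)^2$ and $D(\hat e_2) = \Omega_2^2 + (\nu+(\hat B_0)_2^2)^2$ appear on the right-hand sides of \eqref{cdt:e1} and \eqref{cdt:e2} after clearing denominators.

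The case $j=\hat e_3$ is degenerate in a helpful way: since $\hat e_3\times \hat e_3 = 0$, one has $\mathcal{M}_u(\hat e_3)=0$, so \eqref{eq:g:n:dir:cond} collapses to $\mathcal{M}_u(k)\cdot \hat e_3 \neq 0$, which from the third component of $k\times(\hat e_3\times k)$ equals $(\nu|k|^4+(\hat B_0\cdot k)^2)(k_1^2+k_2^2)/D(k)$; as $\nu>0$ this vanishes only when $k_1^2+k_2^2 = 0$, yielding \eqref{cdt:e3}. The main obstacle is the bookkeeping in part (i): keeping track of which parities $m''$ arise from which pairs $(m,m')$ and verifying that the same non-degeneracy condition \eqref{eq:g:n:dir:cond} suffices to recover \emph{both} parities of $\sigma_{k\pm j}^{m''}$, rather than a strictly stronger condition. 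Once this is in hand, items (ii)--(iv) are just careful algebraic specializations.
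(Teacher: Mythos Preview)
Your proposal is correct and follows essentially the same approach as the paper: compute the double bracket using $M_u(\sigma_k^m)=\mathcal{M}_u(k)\sigma_k^m$, expand via product-to-sum identities, and observe that a suitable pair of brackets gives a $2\times 2$ system in $\sigma_{k\pm j}^{m''}$ whose determinant vanishes iff \eqref{eq:g:n:dir:cond} fails; items (ii)--(iv) are then direct specializations of (i) at $j=\hat e_i$. One small point: in deriving \eqref{cdt:e1} the paper explicitly uses $\hat\Omega_1=0$ (recall $\hat\Omega=\cos\lambda\,\hat e_3-\sin\lambda\,\hat e_2$), which collapses $\mathcal{M}_u(\hat e_1)$ to $\hat e_3/(\nu+(\hat B_0)_1^2)$ and explains why the right-hand side of \eqref{cdt:e1} carries the factor $\nu+(\hat B_0)_1^2$ rather than the full $D(\hat e_1)$ you wrote down.
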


\begin{proof}
Observe that from \eqref{eq:eod} and the definition of the operator $M$ one obtains for each $k \in \ZZ^3_{+}$ and $m \in \{0, 1\}$,
\begin{align}
M_u(\sigma_k^m) = \MMM_u(k) \sigma_k^m \,,
\label{eq:eig:prop}
\end{align}
where $\MMM_u$ is defined in \eqref{eq:smb:u1} and is independent of $m$.
To simplify notation we drop the subscripts $u$ of $M_u$, $\MMM_u$ and instead
write $M$, $\MMM$ for the remainder of the proof.

For the first item, (i), observe that by definition $[[F, \sigma_k^m], \sigma_j^{m'}] \in \mathcal{W}_{n+1}$ for the specified $j, k$ and $m, m' \in \{ 0,1 \}$.
Here recall that the superscript in $\sigma_k^m$ is understood modulo 2,
and $F$ and the Lie bracket are defined by \eqref{f:def}, \eqref{eq:brak:def}.  With
\eqref{eq:eig:prop} we observe that:
\begin{align}
[[F, \sigma_k^m], \sigma_j^{m'}] &=
M(\sigma_k^m) \cdot \nabla \sigma_j^{m'} +
M(\sigma_j^{m'}) \cdot \nabla \sigma_k^m
 \notag\\
&=
 (-1)^{m' + 1} (\MMM(k) \cdot j) \sigma_{k}^m \sigma_j^{m'+1}  +
 (-1)^{m + 1} (\MMM(j) \cdot k) \sigma_{k}^{m+1} \sigma_j^{m'}  \,,
 \label{eq:brak:comp:1}
\end{align}
Using standard trigonometric identities we have
\begin{align*}
\sigma_{k}^n \sigma_j^{n'} = \frac{1}{2}\Big( (-1)^{n n'} \sigma_{k + j}^{n+ n'} +
(-1)^{(n + 1) n'} \sigma_{k - j}^{n + n'}  \Big) \,,
\end{align*}
and consequently \eqref{eq:brak:comp:1} implies
\begin{align}
&[[F, \sigma_k^m], \sigma_j^{m'}]
\notag\\
&= \! \frac{1}{2}
\Big( \!
(-1)^{(m + 1)(m' + 1)} (\MMM(k) \cdot j \! + \! \MMM(j) \cdot k)
 \sigma_{k + j}^{m + m' + 1}  \! + \!
(-1)^{m(m' + 1)}  (\MMM(k) \cdot j \! - \! \MMM(j) \cdot k)
   \sigma_{k - j}^{m + m' + 1}
\Big)\,.
\label{eq:brak:comp:2}
\end{align}
Then with $m, m' \in \{0,1\}$ replaced by $m+1$, $m'+1$,
\begin{align}
&[[F, \sigma_k^{m+1}], \sigma_j^{m'+1}]
\notag\\
&= \frac{1}{2}
\Big(
(-1)^{m m'} (\MMM(k) \cdot j + \MMM(j) \cdot k)
 \sigma_{k + j}^{m + m' + 1}
+
(-1)^{(m+1) m'}  (\MMM(k) \cdot j -  \MMM(j) \cdot k)
   \sigma_{k - j}^{m + m' + 1}
\Big)\,.
\label{eq:brak:comp:3}
\end{align}
Combining \eqref{eq:brak:comp:2}, \eqref{eq:brak:comp:3} we find that
$\sigma_{k + j}^{m + m' + 1}, \sigma_{k - j}^{m + m' + 1} \in \mathcal{W}_{n+1}$, provided that the matrix
\begin{equation}
\left(
\begin{array}{cc}
(-1)^{(m + 1)(m' + 1)} (\MMM(k) \cdot j + \MMM(j) \cdot k)
&  (-1)^{m m'} (\MMM(k) \cdot j + \MMM(j) \cdot k)\\
(-1)^{m(m' + 1)}  (\MMM(k) \cdot j -  \MMM(j) \cdot k)
& (-1)^{(m+1) m'}  (\MMM(k) \cdot j -  \MMM(j) \cdot k)
\end{array}
\right)
\label{eq:mat1}
\end{equation}
is invertible, which is true exactly when \eqref{eq:g:n:dir:cond} holds, and (i) follows.

The remaining items (ii)--(iv) now follow from (i) and \eqref{eq:smb:u1}--\eqref{eq:smb:u2} after some direct computations.
Here in particular note \eqref{cdt:e1} uses the fact that
$\Omm_1 = 0$. The proof of Lemma~\ref{lem:one:stp} is complete.
\end{proof}

\begin{proof}[Proof of Theorem~\ref{lem:hbc}.]

We begin by making the preliminary observation that
\begin{align}
\text{$\forall N > 0, \exists K = K(N, \nu, \Omm)$ such that \eqref{cdt:e1}, \eqref{cdt:e2}
both hold whenever $|k_1|, |k_2| \leq N, k_3 \geq K$.}
\label{eq:far:out}
\end{align}
Indeed, for any fixed $|k_1|, |k_2| \leq N$,
the left hand side of \eqref{cdt:e1}, \eqref{cdt:e2} grows as
$k_3^5$ while the right hand side grows as $k_3^9$ as $k_3 \to \infty$.

We proceed to establish the desired result
by showing that
\begin{align}
\text{for any $N > 0$ there is an $n$ such that
$\sigma_k^m \in \mathcal{W}_n$ for any $|k_1|, |k_2|, |k_3| \leq N$
and any $m \in \{0, 1\}$. }
\label{eq:Hor:equiv}
\end{align}
Fix $N > 0$.  Starting from $k = (1, 0, 0)$ and repeatedly applying Lemma~\ref{lem:one:stp}, (iv), we obtain an $n_1$ such
that $\sigma_k^m \in \mathcal{W}_{n_1}$ for $k = (1, 0, K)$ and each of $m \in\{0,1\}$. Here,
$K > 0$ is as in \eqref{eq:far:out} corresponding to the given $N >0$.

Next, by Lemma~\ref{lem:one:stp}, (ii) and (iii), we obtain an $n_2 > n_1$ such that
$\sigma_k^m \in \mathcal{W}_{n_2}$ for every $k = (k_1, k_2, K)$ with $|k_1|, |k_2| \leq N$ and $k_3 = K$.
From here, applying Lemma~\ref{lem:one:stp}, (iv), we obtain $n_3 > n_2$ so that
$\sigma_k^m \in \mathcal{W}_{n_3}$,
whenever $|k_1|, |k_2|, |k_3| \leq N$, $m \in \{0,1\}$ and $k \not \in \text{span}\{ \bfe_3\}$.

Finally, to obtain the directions corresponding to $\text{span}\{ \bfe_3\}$, consider, for any $l \in \ZZ \setminus \{0\}$,
elements $\sigma_k^m \in \mathcal{W}_{n_3}$ of the form $k = (\pm 1, 0, l)$ with $m = 0,1$.
For any such $k$, noting that $ k_2 = 0$, the condition \eqref{cdt:e1} reduces to
\begin{align}
(\BBB \cdot k)^2 + \nu |k|^4
\neq \frac{D(k)}{\nu + (\BBB)_1^2}
\,.
\label{eq:move:to:ax:cond}
\end{align}
Now observe that the sign in $k_1$ may be chosen such that $(\BBB)_1^2 \leq (\BBB \cdot k)^2$. Indeed, one may take $k_1$ so that
$\textrm{sign} (k_1 (\BBB)_1) = \textrm{sign} (k_3 (\BBB)_3)$ or, in the case that either $(\BBB)_1 = 0$
or $(\BBB)_3 = 0$, one may choose any sign for $k_1$.  With this choice, \eqref{eq:move:to:ax:cond} holds
since
\begin{align*}
(\BBB \cdot k)^2 + \nu |k|^4
<   \frac{1}{\nu + (\BBB)_1^2}((\BBB \cdot k)^2 + \nu |k|^4)^2
\leq \frac{1}{\nu + (\BBB)_1^2} D(k)\,.
\end{align*}
Thus, we obtain an $n \geq n_3$ such that \eqref{eq:Hor:equiv} holds for the given $N$.
The proof of Theorem~\ref{lem:hbc} is complete.
\end{proof}

\section{Convergence of Statistically Steady States}
\label{sec:conv}

In this final section we combine the finite time bounds obtained from Section~\ref{sec:fta} with the contraction estimate
\eqref{eq:distance:contraction:cond} of Section~\ref{sec:lim}, to establish the convergence of statistically invariant
states for \eqref{eq:MHD:vel}--\eqref{eq:MHD:temp} to the unique invariant measure of \eqref{eq:AS:th} (see
(iv) in the summary of main results).  As we discussed in the introduction, one of the novelties here is that
our convergence analysis applies to the extend phase space $H$, not just to the temperature component,
as was the case in our previous work \cite{FoldesGlattHoltzRichards2015}.

In order to precisely state the main result of this section we introduce some notation.
Recall that the metric $\rho$ on $L^2(\mathbb{T}^3)$ is defined in \eqref{eq:m}.  Here the parameter $\eta > 0$
appearing in \eqref{eq:m} is taken to be a suitably small value so that \eqref{eq:distance:contraction:cond} applies.
Using $\rho$ we define a new distance on $H$ according to
\begin{align*}
\tilde{\rho}((\U,\B,\Th),(\tilde{\U},\tilde{\B},\tilde{\Th})) =  \|\U-\tilde{\U}\|_{H^1} + \|\B-\tilde{\B}\|_{H^1} + \rho(\Th,\tilde{\Th}).
\end{align*}
Following the notation in \eqref{eq:equivalent}, $\tilde{\rho}$ induces a Wasserstein metric on $H$ which we denote by
$\WW_{\tilde{\rho}}$.

In what follows it will be useful to consider the set of `observables'
\begin{align}
    V(H) = V_{\eta}(H) := \left\{\phi \in C^1(H):
     [\phi]_{\eta} < \infty \right\}\,,
     \label{eq:v:eta}
\end{align}
where $[ \cdot ]_{\eta}$ denotes
\begin{align*}
  [\phi]_{\eta} :=  &\sup_{(u,b,\theta) \in (H^1)^{2}\times H'} \left[
   \sup_{\zeta_1\in H^1,\|\zeta_1 \|_{H^1} =1} | \nabla _{u}\phi( u,b,\theta) \cdot\zeta_1 |
   +   \sup_{\zeta_2\in H^1,\|\zeta_2 \|_{H^1} =1} | \nabla _{b}\phi( u,b,\theta) \cdot\zeta_2 | \right.  \\
  & \left.\hspace{2in} + \exp(-\eta \|\theta \|)\sup_{\xi\in H',\|\xi \| =1} | \nabla_{\theta} \phi( u,b,\theta)\cdot \xi | 
\right].
\end{align*}
It can be verified with a straight-forward proof (see \cite[Proposition 4.1]{HairerMattingly2008}) that, for any $\phi \in C^1(H)$,
\begin{align}
\label{eq:vbound}
\| \phi \|_{Lip, \tilde{\rho}} \leq C [ \phi ]_{ \eta}.
\end{align}
This bound is useful for translating the convergence of measures relative to $\WW_{\tilde{\rho}}$ to the convergence of observables from
$V(H)$. See \eqref{eq:conv:obs} below.

Let $L:H'\rightarrow H$ be the extension operator associated to \eqref{eq:AS:vel}--\eqref{eq:AS:mag} given by
\begin{align*}
L(\theta) = (M(\theta),\theta).
\end{align*}
Also, the projection operator $\R: H \to H'$ will associate elements in $H$ to their $\theta$ component,
\begin{align*}
  \R(u,b, \theta) = \theta.
\end{align*}
Recall that for any measure $\mu$ and function $F$, the push-forward of $\mu$ under $F$ is given by $F_{\#} \mu  = \mu \circ F^{-1}$.
To simplify notation, for a measure $\mu_1$ on $H'$, we will use $L\mu_1=L_{\#}\mu_1$ to denote the extended measure on $H$, and similarly for $\mu_2$ on $H$ we will write $\Pi\mu_2=\Pi_{\#}\mu_2$ for the projected measure on $H'$ (i.e. its marginal in the $\theta$ component).

\begin{Thm}\label{thm:SSS:conv}
  Suppose that the conditions imposed in Theorem~\ref{thm:c:w} are satisfied.   For any $ \eps,\delta > 0$, let $\mu_{\eps,\delta}$ be a statistically
  invariant state of \eqref{eq:MHD:vel}--\eqref{eq:MHD:temp} satisfying the uniform moment condition \eqref{eq:un:wk:stat:bnd}
  and let $\mu_0$ be the unique invariant measure of \eqref{eq:AS:th}.
  Then, there exists $\tilde{\gamma} = \tilde{\gamma}(\nu, \kappa, \| \sigma \|)$, $C = C(C_0,\nu, \kappa, \| \sigma \|)$ which are independent of $\eps, \delta > 0$, such that
  \begin{align}
    \WW_{\tilde{\rho}}( \mu_{\eps,\delta}, L \mu_0)  \leq C (\eps + \delta)^{\tilde{\gamma}}
    \label{eq:ext:conv:sss}
  \end{align}
  for every $\eps, \delta > 0$.  This implies that for statistically invariant states $(\U,\B,\Th)$ and $\theta$ distributed respectively
  as $\mu_{\eps,\delta}$ and $\mu_0$,
  \begin{align}
  |\E(\phi(\U,\B,\Th) - \phi(L\theta))| \leq C[ \phi ]_{\eta}(\eps + \delta)^{\tilde{\gamma}}
  \label{eq:conv:obs}
  \end{align}
  for any $\phi\in V(H)$.
\end{Thm}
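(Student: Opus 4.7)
The plan, following the approach of \cite{HairerMattingly2008,FoldesGlattHoltzRichards2015}, is to combine the exponential contraction \eqref{eq:distance:contraction:cond} with the finite-time bounds of Theorem~\ref{thm:f:t:eod}; the essentially new ingredient is a \emph{lifting argument} that transfers the contraction from the temperature space $H'$ to the extended phase space $H$. Once \eqref{eq:ext:conv:sss} is in hand, the observable bound \eqref{eq:conv:obs} follows immediately from the Kantorovich--Rubinstein duality \eqref{eq:equivalent} together with \eqref{eq:vbound}.

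The key preparatory step is the \emph{lifting inequality}: for any $\nu_1,\nu_2\in\text{Pr}(H')$,
\begin{align*}
\WW_{\tilde{\rho}}(L\nu_1,L\nu_2)\le C\,\WW_\rho(\nu_1,\nu_2).
\end{align*}
By the two degrees of smoothing of $M$ built into \eqref{eq:smb:u1}--\eqref{eq:smb:u2}, one has $\|M_\ul(\thl_1)-M_\ul(\thl_2)\|_{H^1}+\|M_\bl(\thl_1)-M_\bl(\thl_2)\|_{H^1}\le C\|\thl_1-\thl_2\|$; combined with \eqref{eq:semi:eq:rho} this yields $\tilde{\rho}(L\thl_1,L\thl_2)\le C\rho(\thl_1,\thl_2)$, and pushing any optimal $\rho$-coupling through $L\otimes L$ proves the claim.

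For $T>0$ to be chosen, apply the triangle inequality together with the invariance $\mu_0=\mu_0 P_T$ to write
\begin{align*}
\WW_{\tilde{\rho}}(\mu_{\eps,\delta},L\mu_0)\le \WW_{\tilde{\rho}}\bigl(\mu_{\eps,\delta},L((\R\mu_{\eps,\delta})P_T)\bigr)+\WW_{\tilde{\rho}}\bigl(L((\R\mu_{\eps,\delta})P_T),L\mu_0\bigr).
\end{align*}
The lifting inequality and Theorem~\ref{thm:c:w} control the second term by $Ce^{-\gamp T}\WW_\rho(\R\mu_{\eps,\delta},\mu_0)\le Ce^{-\gamp T}$, where the initial Wasserstein distance is uniformly bounded in $\eps,\delta$ thanks to the exponential moment bounds \eqref{eq:un:wk:stat:bnd} and \eqref{eq:stat:exp:lim}. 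For the first term I would construct a synchronous coupling: take a stationary Martingale solution $(\U_S,\B_S,\Th_S)$ of \eqref{eq:MHD:vel}--\eqref{eq:MHD:temp} with law $\mu_{\eps,\delta}$ at every time, together with a pathwise solution $\thl$ of \eqref{eq:AS:th} driven by the same $W$ and satisfying $\thl(0)=\Th_S(0)$, so that $\thl(T)\sim(\R\mu_{\eps,\delta})P_T$. The first Wasserstein distance is then bounded by the expected $\tilde{\rho}$-discrepancy at a time $t^*\in[T/2,T]$, obtained via the pigeonhole principle applied to the integrated-in-time estimate \eqref{eq:conv:ub:f:t} for the $H^1$ slaving pieces; the $\rho$-piece is handled through \eqref{eq:semi:eq:rho} and Cauchy--Schwarz against the uniform exponential moments, reducing to $(\E\|\Th_S(t^*)-\thl(t^*)\|^2)^{1/2}$ which is controlled by \eqref{eq:conv:th:f:t}.

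Choosing $T\asymp\log(1/(\eps+\delta))$ balances the contraction factor $e^{-\gamp T}$ against the exponential prefactor $e^{CT\|\sigma\|_{L^3}^2}$ that multiplies the $(\eps+\delta)^\gamma$ contribution in Theorem~\ref{thm:f:t:eod}, yielding the algebraic rate $\tilde{\gamma}=\tilde{\gamma}(\nu,\kappa,\|\sigma\|)>0$ claimed in \eqref{eq:ext:conv:sss}. I expect the main obstacle to be the coordinated handling of several competing scales: the pigeonhole step forces the distinguished time $t^*$ into $[T/2,T]$, so one must verify that $e^{-\gamp t^*}$ is small while the exponential growth prefactor in the finite-time bound does not destroy the final algebraic decay. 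The residual $O(1)$ initial-mismatch terms in \eqref{eq:conv:th:f:t}--\eqref{eq:conv:ub:f:t}, which arise because stationary MHD velocity and magnetic fields are only controlled through $\eps\E\|\U_S\|^2+\delta\E\|\B_S\|^2\le C$ rather than $O(\eps+\delta)$, must be absorbed using the interplay between the lifting inequality and the contraction, so that the final decay is driven by the $(\eps+\delta)^\gamma$ contribution present in both \eqref{eq:conv:th:f:t} and \eqref{eq:conv:ub:f:t}.
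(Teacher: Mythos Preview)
Your overall architecture---lifting inequality, triangle inequality through $L((\R\mu_{\eps,\delta})P_T)$, synchronous coupling, time-averaging to access the integrated $H^1$ bound \eqref{eq:conv:ub:f:t}---matches the paper closely. However, the balancing step at the end contains a genuine gap.

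The difficulty is that the exponent $\gamma$ in Theorem~\ref{thm:f:t:eod} is \emph{not} independent of the time horizon: the statement requires $\gamma\le \eta/(\eta+CT)$, a constraint that comes from the convergence of the series in Lemma~\ref{lem:prob:gron}. Hence if you take $T\asymp\log(1/(\eps+\delta))$, then $\gamma\sim \eta/(CT)\sim (\log(1/(\eps+\delta)))^{-1}$ and $(\eps+\delta)^{\gamma}$ does not decay at all---it tends to a positive constant. Meanwhile the prefactor $\exp(\eta T\|\sigma\|_{L^3}^2)$ blows up like a negative power of $(\eps+\delta)$. So the finite-time piece diverges rather than decays, and no choice of $T\to\infty$ can yield an algebraic rate; at best one obtains a sub-algebraic rate like $\exp(-c\sqrt{\log(1/(\eps+\delta))})$, which is weaker than \eqref{eq:ext:conv:sss}.

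The paper circumvents this by never letting the time horizon grow. Instead of bounding $\WW_{\tilde\rho}(L((\R\mu_{\eps,\delta})P_T),L\mu_0)$ by $Ce^{-\gamp T}$ times a uniform constant, it applies one more triangle inequality (through $\mu_{\eps,\delta}$ itself, using $L\mu_0=L(\mu_0P_t)$) so that $\WW_{\tilde\rho}(\mu_{\eps,\delta},L\mu_0)$ reappears on the right-hand side with coefficient $Ce^{-\gamp t_0}$. Choosing a \emph{fixed} $t_0=t_0(\nu,\kappa,\|\sigma\|)$ so that this coefficient equals $\tfrac12$ allows the term to be absorbed to the left, leaving only finite-time pieces evaluated on the bounded interval $[0,2t_0]$. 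With $t_0$ fixed, $\gamma$ is bounded away from zero and the algebraic rate follows directly.

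A secondary remark: your worry about ``residual $O(1)$ initial-mismatch terms'' is unfounded. For stationary Martingale solutions the energy balance \eqref{avebuth} gives $\E\|\nabla\U_S\|^2+\E\|\nabla\B_S\|^2\le C$ uniformly in $\eps,\delta$ (not merely $\eps\E\|\U_S\|^2\le C$), so $\eps\E\|\U_S(0)-M_\ul(\thl(0))\|^2+\delta\E\|\B_S(0)-M_\bl(\thl(0))\|^2\le C(\eps+\delta)$ and these terms already carry the small parameter; the paper uses exactly this in \eqref{eq:tri:bnd:f:12}.
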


\begin{Rmk}
The estimate \eqref{eq:conv:obs} indicates that, at statistical equilibrium, observations of $\Th$ are well approximated by observations of $\theta$,
which in turn provide good estimates on observations of $(\U,\B)$ through $M(\theta)$.  That is, using \eqref{eq:conv:obs}
we can approximate $(\U,\B)$ using observations of $\Th$ from statistically stationary states.
\end{Rmk}

The proof of Theorem~\ref{thm:SSS:conv} makes use of a second metric on $L^2(\TT^3)$ defined as
\begin{align*}
\rho^{*}(\theta,\tilde{\theta}) = \tilde{\rho}(L(\theta),L(\tilde{\theta})),
\end{align*}
and its associated Wasserstein distance $\WW_{\rho^*}$.

\begin{Lem} \label{lem:rhost:equiv}
The metric $\rho^*$ is equivalent to $\rho$ and hence $\WW_{\rho^*}$ and $\WW_{\rho}$ are also equivalent.    Moreover,
\begin{align}
   \WW_{\tilde{\rho}}(L \mu_1,L\mu_2) \leq \WW_{\rho^{*}}(\mu_1 , \mu_2) \leq C\WW_{\tilde{\rho}}(L \mu_1,L\mu_2),
   \label{eq:rhost:equiv:wd}
\end{align}
for any $\mu_1,\mu_2 \in Pr(L^2(\TT^3))$.
\end{Lem}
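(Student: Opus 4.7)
The plan is to establish the pointwise equivalence $\rho \asymp \rho^*$ on $L^2(\TT^3)$ first, then transfer it to the Wasserstein level essentially for free, and finally prove the sharper chain \eqref{eq:rhost:equiv:wd} by a direct coupling argument that exploits the fact that $L : H' \to H$ is injective with left inverse $\R$.

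For the pointwise equivalence, I would unpack the definition of $L$ and use linearity of $M$ to write
\[\rho^*(\theta,\tilde\theta) = \|M_u(\theta-\tilde\theta)\|_{H^1} + \|M_b(\theta-\tilde\theta)\|_{H^1} + \rho(\theta,\tilde\theta).\]
The two (and three) degrees of smoothing encoded in the Fourier symbols \eqref{eq:smb:u1}--\eqref{eq:smb:u2} give $\|M_u\phi\|_{H^1} + \|M_b\phi\|_{H^1} \leq C\|\phi\|$ for any zero-mean $\phi \in L^2(\TT^3)$, while \eqref{eq:semi:eq:rho} supplies $\|\theta-\tilde\theta\| \leq \rho(\theta,\tilde\theta)$ directly. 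Combining these yields $\rho(\theta,\tilde\theta) \leq \rho^*(\theta,\tilde\theta) \leq C\rho(\theta,\tilde\theta)$, and the corresponding equivalence $\WW_\rho \leq \WW_{\rho^*} \leq C\WW_\rho$ then follows immediately by integrating against any coupling and taking infima in \eqref{eq:equivalent}.

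For the two inequalities in \eqref{eq:rhost:equiv:wd} the natural strategy is to build couplings by pushforward under $L$ and $\R$. For the left inequality, given $\Gamma \in \mathcal{C}(\mu_1,\mu_2)$ the measure $(L,L)_\#\Gamma$ lies in $\mathcal{C}(L\mu_1, L\mu_2)$, and the change of variables formula together with the very definition of $\rho^*$ gives
\[\int_{H\times H} \tilde\rho \, d(L,L)_\#\Gamma = \int_{H'\times H'} \tilde\rho(L\theta, L\tilde\theta) \, d\Gamma(\theta,\tilde\theta) = \int \rho^* \, d\Gamma,\]
so $\WW_{\tilde\rho}(L\mu_1, L\mu_2) \leq \WW_{\rho^*}(\mu_1,\mu_2)$ after infimizing over $\Gamma$. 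The reverse inequality is the crux of the lemma and uses injectivity of $L$: each $L\mu_i$ is concentrated on $L(H') = \{L(\theta) : \theta \in H'\} \subset H$, so any $\tilde\Gamma \in \mathcal{C}(L\mu_1, L\mu_2)$ is supported on $L(H')\times L(H')$, where $L\circ \R$ coincides with the identity map. Consequently $\Gamma := (\R,\R)_\# \tilde\Gamma$ lies in $\mathcal{C}(\mu_1,\mu_2)$ and
\[\int \rho^* \, d\Gamma = \int \tilde\rho(L\R(x), L\R(y)) \, d\tilde\Gamma(x,y) = \int \tilde\rho \, d\tilde\Gamma,\]
yielding $\WW_{\rho^*}(\mu_1,\mu_2) \leq \WW_{\tilde\rho}(L\mu_1, L\mu_2)$. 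In fact these two bounds combine to give equality in \eqref{eq:rhost:equiv:wd}, so the statement holds even with $C=1$.

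The only mildly delicate point is the support argument $L \circ \R = \mathrm{id}$ used in the last display, which is valid only on $L(H')\times L(H')$; this restriction follows from the marginal conditions on $\tilde\Gamma$ together with the fact that each $L\mu_i$ is by construction concentrated on $L(H')$. No substantive analytic input beyond the smoothing properties of $M$ embodied in \eqref{eq:smb:u1}--\eqref{eq:smb:u2} and the elementary estimate \eqref{eq:semi:eq:rho} is needed, so I do not anticipate any real obstacle in carrying out this plan.
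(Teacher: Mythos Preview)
Your argument is correct and in fact yields a sharper conclusion than the paper's, namely the equality $\WW_{\tilde\rho}(L\mu_1,L\mu_2) = \WW_{\rho^*}(\mu_1,\mu_2)$. The pointwise equivalence of $\rho$ and $\rho^*$ is handled identically in both approaches, but for \eqref{eq:rhost:equiv:wd} the paper works on the dual side: it compares the classes of Lipschitz test functions by showing that every $\phi$ with $\|\phi\|_{Lip,\rho^*} \leq 1$ can be written as $\psi \circ L$ for some $\psi = \phi \circ \R$ with $\|\psi\|_{Lip,\tilde\rho} \leq C$. Because $\psi$ is then tested at arbitrary points of $H$ (not just on $L(H')$), the paper must bound $\tilde\rho$ from below by $\rho$ rather than by $\rho^*$, and this is where the extra constant $C$ enters. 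Your primal (coupling) argument sidesteps this loss: since any coupling of $L\mu_1$ and $L\mu_2$ is automatically concentrated on $L(H')\times L(H')$, the identity $L\circ\R = \mathrm{id}$ holds on the support and the change of variables is exact. Both routes are short, but yours is slightly cleaner and gives the optimal constant; the paper's dual approach, on the other hand, makes the connection to the Lipschitz seminorm \eqref{eq:Lip} more explicit, which is the form used elsewhere (e.g.\ in \eqref{eq:vbound}).
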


\begin{proof}[Proof of Lemma \ref{lem:rhost:equiv}]
Invoking the smoothing properties of the constitutive law $M=(M_u,M_b)$ along with \eqref{eq:semi:eq:rho}, we find
\begin{align}
\rho(\theta,\tilde{\theta})
	\leq \rho^*(\theta,\tilde{\theta})
	= \|M_u(\theta -\tilde{\theta})\|_{H^1} + \|M_b(\theta-\tilde{\theta})\|_{H^1} + \rho(\theta,\tilde{\theta})
	\leq C\|\theta - \tilde{\theta}\| + \rho(\theta,\tilde{\theta}) \leq C\rho(\theta,\tilde{\theta}).
\label{eq:equiv1}
\end{align}
It follows directly from the definition
that the corresponding Wasserstein distances $\WW_{\rho}$ and $\WW_{\rho^*}$ are equivalent as well; cf. \eqref{eq:equivalent}.

To establish \eqref{eq:rhost:equiv:wd} for each $k > 0$, we consider
\begin{align*}
  T_1^k &:= \{  \phi: L^2(\mathbb{T}^3) \to \RR: \| \phi \|_{Lip, \rho^*} \leq k \},\\
  T_2^k &:= \{ \phi: L^2(\mathbb{T}^3) \to \RR: \phi(\theta) = \psi(L \theta) \textrm{ for some } \psi: H \to \RR \textrm{ with } \| \psi \|_{Lip, \tilde{\rho}} \leq k\}.
\end{align*}
We claim that for each $k > 0$ one has $T_2^k \subset T_1^k \subset T_2^{Ck}$, where $C$ is as in \eqref{eq:equiv1}. Indeed, if $\phi \in T_2^k$,
then
\begin{equation*}
\frac{|\phi(\theta) - \phi(\tilde{\theta})|}{\rho^{*}(\theta, \tilde{\theta})} =
\frac{|\psi(L(\theta)) - \psi(L(\tilde{\theta}))|}{\tilde{\rho}(L(\theta), L(\tilde{\theta}))}
\leq k, \qquad \textrm{for each } \theta \neq \tilde{\theta}\,,
\end{equation*}
and therefore $\phi \in T_1^k$. On the other hand if
$\phi \in T_1^k$ we define $\psi := \phi \circ \Pi$ (clearly $\phi = \psi \circ L$) and calculate
\begin{equation*}
\frac{|\psi(u, b, \theta) - \psi(\tilde{u}, \tilde{b}, \tilde{\theta})|}{\tilde{\rho}((u, b, \theta), (\tilde{u}, \tilde{b}, \tilde{\theta}))}
= \frac{|\phi(\theta) - \phi(\tilde{\theta})|}{\tilde{\rho}((u, b, \theta), (\tilde{u}, \tilde{b}, \tilde{\theta}))} \leq
\frac{|\phi(\theta) - \phi(\tilde{\theta})|}{\rho(\theta, \tilde{\theta})} \leq C \frac{|\phi(\theta) - \phi(\tilde{\theta})|}{\rho^*(\theta, \tilde{\theta})}
= Ck,
\end{equation*}
and we showed $\phi \in T_2^{Ck}$.
Finally, for any $\mu_1, \mu_2 \in Pr(L^2(\TT^3))$,
\begin{multline*}
   \WW_{\tilde{\rho}}(L \mu_1,L\mu_2)
   = \sup_{\|\psi \|_{Lip, \tilde{\rho}} \leq 1} \left| \int \psi(L\theta) d \mu_1(\theta)  - \int   \psi(L\theta) d \mu_2(\theta) \right|\\
   = \sup_{\phi \in T_2^1} \left| \int \phi(\theta) d \mu_1(\theta)  - \int   \phi(\theta) d \mu_2(\theta) \right|
   \leq \sup_{\phi \in T_1^1} \left| \int \phi(\theta) d \mu_1(\theta)  - \int   \phi(\theta) d \mu_2(\theta) \right|
   = \WW_{\rho^*}( \mu_1, \mu_2),
\end{multline*}
and
\begin{align*}
\WW_{\rho^*}( \mu_1, \mu_2) &= \sup_{\phi\in T_1^1}\left| \int \phi(\theta) d \mu_1(\theta)  - \int   \phi(\theta) d \mu_2(\theta) \right|
\leq \sup_{\tilde{\phi}\in T_2^C}\left| \int \tilde{\phi}(\theta) d \mu_1(\theta)  - \int   \tilde{\phi}(\theta) d \mu_2(\theta) \right| \\
&= C \sup_{\tilde{\phi}\in T_2^1}\left| \int \tilde{\phi}(\theta) d \mu_1(\theta)  - \int   \tilde{\phi}(\theta) d \mu_2(\theta) \right|
=
C \WW_{\tilde{\rho}}(L\mu_1,L\mu_2),
\end{align*}
as desired.
\end{proof}

By combining Lemma \ref{lem:rhost:equiv} with Theorem \ref{thm:c:w} we obtain the following `lifted' contraction property.

\begin{Cor}
\label{cor:ext:conv}
Let $\{P_t\}_{t \geq 0}$ be the Markov semigroup  associated to \eqref{eq:AS:th} and assume that the H\"ormander
 condition  \eqref{eq:Hormander:easy} holds.
 Then $\{P_t\}_{t \geq 0}$ is satisfies the following contractive property in $\WW_{\tilde{\rho}}$:
\begin{align}
\label{eq:conL}
\WW_{\tilde{\rho}}(L(\mu_1P_t),L(\mu_2P_t)) \leq Ce^{-\gamp t}\WW_{\tilde{\rho}}(L\mu_1,L\mu_2) \quad \text{for every }\ t\geq 0,
\end{align}
where the constants $C = C(\|\sigma\|, \kappa, \nu)> 0$, $\eta = \eta(\|\sigma\|, \kappa, \nu) >0$, $\gamp = \gamp(\|\sigma\|, \kappa, \nu)>0$ are all independent of $t \geq 0$ and $\mu_1, \mu_2$.
\end{Cor}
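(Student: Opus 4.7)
The plan is to chain together the three ingredients already at our disposal: the contraction in $\WW_\rho$ on the temperature phase space $H'$ given by Theorem~\ref{thm:c:w}, the equivalence of $\rho$ and $\rho^*$ established in the proof of Lemma~\ref{lem:rhost:equiv}, and the two-sided comparison \eqref{eq:rhost:equiv:wd} between $\WW_{\tilde{\rho}}$ on lifted measures and $\WW_{\rho^*}$ on their projections. Since $L$ is, by definition, an isometric embedding of the $\theta$-dynamics into the extended phase space $H$ and since $M$ provides two degrees of smoothing, essentially no new analytical input should be required beyond combining these facts.

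First I would observe that the Markov dynamics on $H$ under $\{P_t\}$ commute with $L$ in the following sense: if $\mu_i$ is a measure on $H'$, then $L(\mu_i P_t)$ is the push-forward under $L$ of the law at time $t$ of the temperature equation started from $\mu_i$. Applying the left-hand inequality in \eqref{eq:rhost:equiv:wd} yields
\begin{equation*}
\WW_{\tilde{\rho}}(L(\mu_1 P_t), L(\mu_2 P_t)) \;\leq\; \WW_{\rho^*}(\mu_1 P_t,\, \mu_2 P_t).
\end{equation*}
Next, the equivalence $\rho \leq \rho^* \leq C\rho$ established in \eqref{eq:equiv1} transfers directly to the Wasserstein level through the duality formulation in \eqref{eq:equivalent}, giving $\WW_{\rho^*}(\nu_1,\nu_2) \leq C\,\WW_\rho(\nu_1,\nu_2)$ for any $\nu_1,\nu_2\in\Pr(H')$. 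Plugging in $\nu_i=\mu_i P_t$ and invoking the contraction \eqref{eq:distance:contraction:cond} from Theorem~\ref{thm:c:w} produces
\begin{equation*}
\WW_{\rho^*}(\mu_1 P_t, \mu_2 P_t) \;\leq\; C\,\WW_\rho(\mu_1 P_t,\mu_2 P_t) \;\leq\; C e^{-\gamp t}\,\WW_\rho(\mu_1,\mu_2).
\end{equation*}

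Finally, using $\WW_\rho \leq \WW_{\rho^*}$ (from the same equivalence) and the right-hand inequality in \eqref{eq:rhost:equiv:wd}, we bound the right-hand side by $C e^{-\gamp t}\,\WW_{\tilde{\rho}}(L\mu_1, L\mu_2)$, completing the chain
\begin{equation*}
\WW_{\tilde{\rho}}(L(\mu_1 P_t), L(\mu_2 P_t)) \;\leq\; C e^{-\gamp t}\,\WW_{\tilde{\rho}}(L\mu_1, L\mu_2),
\end{equation*}
which is \eqref{eq:conL}. The constants $\eta,C,\gamp$ depend only on $\|\sigma\|,\kappa,\nu$, since this dependence is inherited from Theorem~\ref{thm:c:w} and Lemma~\ref{lem:rhost:equiv}. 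There is no real obstacle here; the only point that needs a moment's care is to verify that the semigroup $P_t$ on $H'$ correctly lifts to the pushed-forward measures $L(\mu_i P_t)$ appearing on the extended phase space, which is immediate from the fact that $(u,b)$ components of invariant-in-law statistics are deterministic functions of $\theta$ through $M$.
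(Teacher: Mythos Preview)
Your proof is correct and follows essentially the same approach as the paper: both chain together the left inequality in \eqref{eq:rhost:equiv:wd}, the equivalence $\WW_{\rho^*}\leq C\WW_\rho$ from \eqref{eq:equiv1}, the contraction \eqref{eq:distance:contraction:cond}, the reverse equivalence $\WW_\rho\leq\WW_{\rho^*}$, and finally the right inequality in \eqref{eq:rhost:equiv:wd}, in exactly that order. Your exposition is slightly more detailed, but the argument is the same.
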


\begin{proof}[Proof of Corollary~\ref{cor:ext:conv}]
By combining Lemma~\ref{lem:rhost:equiv} with \eqref{eq:distance:contraction:cond}, we find that
\begin{align*}
\WW_{\tilde{\rho}}(L(\mu_1P_t),L(\mu_2P_t)) \leq C\WW_{\rho^{*}}(\mu_1P_t , \mu_2P_t) \leq C \WW_{\rho}(\mu_1P_t,\mu_2P_t ) \\
\leq Ce^{-\gamp t}\WW_{\rho}(\mu_1,\mu_2) \leq  Ce^{-\gamp t}\WW_{\rho^{*}}(\mu_1,\mu_2) \leq Ce^{-\gamp t}\WW_{\tilde{\rho}}(L\mu_1,L\mu_2),
\end{align*}
for any  $\mu_1,\mu_2 \in Pr(L^2(\TT^3))$, where $C$ is independent of $t$ and $\mu_1, \mu_2$.
\end{proof}

We now proceed with the proof of Theorem~\ref{thm:SSS:conv}.

\begin{proof}[Proof of Theorem~\ref{thm:SSS:conv}]

We invoke the invariance of $\mu_0$ under $P_t$ and apply \eqref{eq:conL} to infer for any $t, t_0 \geq 0$,
\begin{align}
\WW_{\tilde{\rho}}(\mu_{\eps,\delta},L\mu_0)&=\WW_{\tilde{\rho}}(\mu_{\eps,\delta},L( \mu_0P_{t+t_0}))  \notag \\
	&\leq \WW_{\tilde{\rho}}(\mu_{\eps,\delta},L((\Pi \mu_{\eps,\delta})P_{t + t_0}))
		+ \WW_{\tilde{\rho}}(L((\Pi \mu_{\eps,\delta})P_{t + t_0 }),L (\mu_0P_{t + t_0})) \notag \\
	&\leq \WW_{\tilde{\rho}}(\mu_{\eps,\delta},L((\Pi \mu_{\eps,\delta})P_{t + t_0})) +
		Ce^{-\gamp t_0}\WW_{\tilde{\rho}}(L((\Pi \mu_{\eps,\delta})P_{t}),L(\mu_0 P_{t})) \notag \\
	&\leq \WW_{\tilde{\rho}}(\mu_{\eps,\delta},L((\Pi \mu_{\eps,\delta})P_{t + t_0})) +
		Ce^{-\gamp t_0}\left[\WW_{\tilde{\rho}}(\mu_{\eps,\delta},L((\Pi \mu_{\eps,\delta})P_t))+\WW_{\tilde{\rho}}(\mu_{\eps,\delta},L\mu_0)\right]. %\notag\\	
		\notag
\end{align}
By taking $t_0=t_0(\|\sigma\|,\kappa,\nu)$ large enough such that $Ce^{-\gamp t_0}=\frac{1}{2}$, we find that for $t \geq 0$,
\begin{align}		
	\WW_{\tilde{\rho}}(\mu_{\eps,\delta},L\mu_0)
		\leq 2 \WW_{\tilde{\rho}}(\mu_{\eps,\delta},L((\Pi \mu_{\eps,\delta})P_{t + t_0}))
			+ C\WW_{\tilde{\rho}}(\mu_{\eps,\delta},L((\Pi \mu_{\eps,\delta})P_{t})).
	\label{eq:tri:bnd:1}
\end{align}
Using the coupling definition of the Wasserstein metric in \eqref{eq:equivalent} it follows that for any $s \geq 0$,
\begin{align}
\WW_{\tilde{\rho}}(\mu_{\eps,\delta},L((\Pi \mu_{\eps,\delta})P_{s}))
&\leq \E\tilde{\rho}((\U,\B,\Th)(s),L(\theta(s))) \notag\\
 &=  \underbrace{\E(\|\U(s)-M_{u}(\theta(s))\|_{H^1}
+ \|\B(s)-M_{b}(\theta(s))\|_{H^1})}_{\textstyle I_1(s)} + \underbrace{\E \rho(\Th(s),\theta(s))}_{\textstyle I_2(s)},
	\label{eq:tri:bnd:2}
\end{align}
where $(\U,\B,\Th) \sim \mu_{\eps, \delta}$ denotes the stationary solution to \eqref{eq:MHD:vel}--\eqref{eq:MHD:temp} provided by
Proposition \ref{prop:weak:sol:full}, and $\theta$ denotes the solution to \eqref{eq:AS:th}
with initial condition distributed according to the stationary state $\Th = \Th(\eps,\del) \sim \Pi \mu_{\eps, \delta}$.
%On the other hand
%\begin{align}
%\WW_{\tilde{\rho}}(L_{\#}(\Pi_{\theta} \mu_{\eps,\delta}),\mu_{\eps,\delta})
%	&\leq
%   \WW_{\tilde{\rho}}(L_{\#}(\Pi_{\theta} \mu_{\eps,\delta}),L_{\#}((\Pi_{\theta} \mu_{\eps,\delta}) P_{t}))
%				+  \WW_{\tilde{\rho}}(L_{\#}((\Pi_{\theta} \mu_{\eps,\delta}) P_{t}),\mu_{\eps,\delta}) \notag\\
%	&\leq
%   C\WW_{\rho^*}(\Pi_{\theta} \mu_{\eps,\delta},(\Pi_{\theta} \mu_{\eps,\delta}) P_{t})
%				+  \WW_{\tilde{\rho}}(L_{\#}((\Pi_{\theta} \mu_{\eps,\delta}) P_{t}),\mu_{\eps,\delta}) 	\notag\\
%    &\leq C\WW_{\rho}(\Pi_{\theta} \mu_{\eps,\delta},(\Pi_{\theta} \mu_{\eps,\delta}) P_{t})
%				+  \WW_{\tilde{\rho}}(L_{\#}((\Pi_{\theta} \mu_{\eps,\delta}) P_{t}),\mu_{\eps,\delta})	\notag\\	
%    &\leq  C\E \rho(\Th(t),\theta(t))
%				+  \WW_{\tilde{\rho}}(L_{\#}((\Pi_{\theta} \mu_{\eps,\delta}) P_{t}),\mu_{\eps,\delta}),												\label{eq:tri:bnd:3}
%\end{align}
%and we can control the second term on the right-hand side of \eqref{eq:tri:bnd:3} using \eqref{eq:tri:bnd:2}.
By combining the estimates \eqref{eq:tri:bnd:1}--\eqref{eq:tri:bnd:2} and integrating in $t$ over the interval $[0, t_0]$ we infer that
\begin{align} \label{eq:tri:bnd:f:21}
	\WW_{\tilde{\rho}}(\mu_{\eps,\delta},L\mu_0)
	&\leq  \frac{C}{t_0} \int_{0}^{t_0} (I_1(t + t_0) + I_2(t + t_0) + I_1(t) + I_2(t)) \, dt =
	C  \int_{0}^{2t_0} (I_1(t) + I_2(t)) \, dt \,.
\end{align}
%Integrating in $t$ gives
%\begin{align}
%	\WW_{\tilde{\rho}}(\mu_{\eps,\delta},L_{\#}\mu_0)
%	&\leq  \frac{C}{t - t_0}\E \int_{t_0}^t (\|\U(s)-M_{u}(\theta(s))\|_{H^1}  + \|\B(s)-M_{b}(\theta(s))\|_{H^1}  + \rho(\Th(s),\theta(s)))ds  \notag\\
%            &:= I_1 + I_2 + I_3.
%            \label{eq:tri:bnd:f:1}
%\end{align}
where we have absorbed $t_0$ into the constant $C=C(\|\sigma\|,\kappa,\nu)$ in the last line.
Applying the Cauchy-Schwarz inequality, \eqref{eq:conv:ub:f:t} with $\theta(0)$ distributed as $\Th(0)$, and \eqref{eq:un:wk:stat:bnd},
we obtain a bound for $I_1$ as
\begin{align}
  \int_0^{2t_0} I_1(t) \, dt
   &\leq C \left(\E \int_0^{2t_0}  \left(\|\U(t)-M_{u}(\theta(t))\|_{H^1}^2 + \|\B(t)-M_{b}(\theta(t))\|_{H^1}^2 \right) dt\right)^{1/2} \notag\\
   &\leq C \exp(\eta t_0 \| \sigma\|^2_{L^3}) \left((\epsilon + \delta)^{\gamma} +  \left(\E (\eps\|\U(0) -M_{u}(\theta(0))\|^2 + \delta\|\B(0)-M_{b}(\theta(0))\|^2
   ) \right)^{\gamma} \right)^{1/4}
   \notag\\
   &\leq C \exp(\eta t_0 \| \sigma\|^2_{L^3}) (\epsilon + \delta)^{\gamma/4} \left(\E (\|\U(0) \|^2 + \|\B(0)\|^2  + \|\Th(0)\|^2 + 1) \right)^{\gamma/4} \notag\\
   &\leq C \exp(\eta t_0 \| \sigma\|^2_{L^3}) (\epsilon + \delta)^{\gamma/4} \,,
   \label{eq:tri:bnd:f:12}
\end{align}
where $C$ in the last line depends on $C_0$ given in \eqref{eq:ft:d:a:1}, but is independent of $t$, $\eps$, and $\delta$.
%Note that to obtain the last line of \eqref{eq:tri:bnd:f:2}, we used bounds on the constitutive law for \eqref{eq:AS:vel}--\eqref{eq:AS:mag},
%and the assumption that $\theta(0)$ is distributed as $\Th(0)$.
Similarly, using a property of the metric $\rho$, \eqref{eq:semi:eq:rho}, and the bounds \eqref{eq:conv:th:f:t},
\eqref{eq:un:wk:stat:bnd} we have
\begin{align}
   \int_0^{2t_0} I_2(t) \, dt &\leq  C  \E \int_{0}^{2t_0}  \exp( 2 \eta (\| \Th(t)\|^2 + \|\theta(t)\|^2))  \| \Th(t) -\theta(t) \| dt \notag \\
   &\leq C \left( \E \exp\Big(4 \eta
   \sup_{t\in[0,2t_0]} (\| \Th(t)\|^2 + \| \theta(t)\|^2)\Big) \right)^{1/2}
	     \left( \E \sup_{t \in [0,2t_0]} \| \Th(t) -\theta(t) \|^2 \right)^{1/2}\notag \\
    &\leq  C  \exp( 10\eta t_0 \| \sigma\|^2_{L^3})
                (\epsilon + \delta)^{\gamma/4} \left(\E (\|\U(0) \|^2 + \|\B(0)\|^2  + \|\Th(0)\|^2) \right)^{\gamma/4} \notag \\
    &\leq   C  \exp( 10\eta t_0 \| \sigma\|^2_{L^3})
                (\epsilon + \delta)^{\gamma/4},
       \label{eq:tri:bnd:f:3}
\end{align}
where the constant $C$ depends on $C_0$, but is independent of $t_0, \eps$, and $\delta$.
%Using that $(\U, \B, \Th)$ is statistically stationary and referring to \eqref{avebuth}, we find that for any $T > 0$,
%\begin{align}
%  \E (\|\U(0) \|^2 + \|\B(0)\|^2 + \|\Th(0)\|^2)
%  	&= \frac{1}{T} \E  \int_0^T( \|\U(s) \|^2 + \|\B(s)\|^2 + \|\Th(s)\|^2) ds \notag\\
%	&\leq  \frac{C}{T}\E (\eps \|\U(0)\|^2 + \delta  \|\B(0)\|^2  + \|\Th(0)\|^2) + C\| \sigma\|^2, \notag
%\end{align}
%where $C$ is independent of $T, \eps$ and $\delta$.  We thus infer that
%\begin{align}
%   \E (\|\U(0) \|^2 + \|\B(0)\|^2 + \|\Th(0)\|^2)  \leq C\| \sigma\|^2,
%   \label{eq:tri:bnd:f:4}
%\end{align}
%so that this quantity is bounded independently of $\eps, \delta$.
Combining \eqref{eq:tri:bnd:f:21} with \eqref{eq:tri:bnd:f:12}--\eqref{eq:tri:bnd:f:3}
yields \eqref{eq:ext:conv:sss}.  The convergence of observables as in \eqref{eq:conv:obs} follows from \eqref{eq:ext:conv:sss} by using the definition of 
$\WW_{\tilde{\rho}}$ (see \eqref{eq:equivalent}) and applying \eqref{eq:vbound}.
\end{proof}

\appendix

\section{Uniform Moment Bounds}

This section collects various moment bounds related to the full system \eqref{eq:MHD:vel}--\eqref{eq:MHD:temp}
as well as the limit active scalar equation \eqref{eq:AS:th} which we use repeatedly in the analysis
above.

We begin with some $L^p$-bounds on the limit equation.
\begin{Lem}\label{lem:mom:bnds:1}
Let $\thl$ be a solution of \eqref{eq:AS:th} with
$\mathcal{F}_0$ adapted initial condition $\thl_0$.
Then, for any $p \geq 2$ there exists constants $\eta_0 = \eta_0(p, \kappa, \|\sigma\|_{L^p}) > 0$, $C = C(p,\kappa)$
and $\alpha=\alpha(p,\kappa)$,
such that for each $\eta \leq \eta_0$
\begin{align}
    \E \exp\left( \eta \sup_{s \in [0,t]} \|\thl\|^2_{L^p} + \eta \int_0^t \|\thl\|^2_{L^p}ds \right) \leq
    C\E \exp( \eta (\|\thl_0\|^2_{L^p} + t \|\sigma\|^2_{L^p})).
       \label{eq:mod:ad:1}
\end{align}
Moreover,
\begin{align}
    \E \exp\left( \eta \left( \sup_{s \in [0,t]} \|\thl\|^2 + \kappa \int_0^t \| \nabla \thl \|^2 ds \right)\right) \leq  C\E \exp( \eta (\|\thl_0\|^2 + t \|\sigma\|^2)).
       \label{eq:mod:ad:2}
\end{align}
Finally, for any $t \geq 0$
\begin{align}
    \E \exp\left( \eta \|\thl(t)\|^2_{L^p} \right) \leq  C\E \exp( \eta (e^{-\alpha t} \|\thl_0\|_{L^p}^2 +  C\|\sigma\|_{L^p}^2)).
       \label{eq:mod:ad:3}
\end{align}
\end{Lem}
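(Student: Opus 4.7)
The plan is to establish these bounds via It\^o's formula applied to $\|\thl\|^p_{L^p}$, combined with the exponential supermartingale technique. The starting observation is that since $u = M_u(\thl)$ is divergence free, the advection term vanishes: $\int |\thl|^{p-2}\thl (u\cdot\nabla\thl)\,dx = p^{-1}\int u\cdot \nabla|\thl|^p\, dx = 0$. It\^o's formula applied to $\|\thl\|_{L^p}^p$ then yields
\begin{align*}
d\|\thl\|_{L^p}^p + p(p-1)\kappa \int_{\TT^3} |\thl|^{p-2}|\nabla \thl|^2 dx \, dt = \frac{p(p-1)}{2} \int_{\TT^3} |\thl|^{p-2} \sum_{k,m} \alpha_{k,m}^2 (\sigma_k^m)^2 dx \, dt + dM_t,
\end{align*}
where the stochastic integral $M_t$ has quadratic variation $d[M]_t = p^2 \sum_{k,m} \alpha_{k,m}^2 (\int |\thl|^{p-2}\thl\,\sigma_k^m dx)^2 dt$. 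By H\"older's inequality (with exponents $p/(p-2)$ and $p/2$) the It\^o correction is bounded by $C_p\|\thl\|_{L^p}^{p-2}\|\sigma\|_{L^p}^2$, and a Cauchy--Schwarz followed by the same H\"older computation yields $d[M]_t \leq C_p \|\thl\|_{L^p}^{2p-2}\|\sigma\|_{L^p}^2 dt$. Meanwhile the dissipation equals $(4(p-1)/p)\|\nabla |\thl|^{p/2}\|^2$, which by the Sobolev embedding $H^1(\TT^3)\hookrightarrow L^6$ applied to $|\thl|^{p/2}$ bounds a positive multiple of $\|\thl\|_{L^{3p}}^p$ (and hence of $\|\thl\|_{L^p}^p$) from below, modulo a lower-order term arising from the non-zero mean of $|\thl|^{p/2}$ which is itself dominated by $\|\thl\|_{L^p}^p$ up to constants.

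Applying Young's inequality to absorb the It\^o correction produces the energy-type inequality $d\|\thl\|_{L^p}^p + \alpha \|\thl\|_{L^p}^p dt \leq C\|\sigma\|_{L^p}^p dt + dM_t$, and the quadratic variation---which scales like $\|\thl\|_{L^p}^{2p-2}$ but, since $2p-2 \leq 3p$ for $p\geq 2$, is controlled by the stronger $L^{3p}$ dissipation---is absorbed into the drift provided $\eta$ is taken sufficiently small depending on $\kappa$, $p$, and $\|\sigma\|_{L^p}$. With this in hand, bounds \eqref{eq:mod:ad:1}--\eqref{eq:mod:ad:2} follow from the standard exponential supermartingale trick: for $\eta \leq \eta_0$, the process
\begin{align*}
Z_t := \exp\biggl(\eta \|\thl(t)\|_{L^p}^p + \eta\alpha \int_0^t \|\thl(s)\|_{L^p}^p ds - \eta C\|\sigma\|_{L^p}^p t\biggr)
\end{align*}
becomes a positive supermartingale after the quadratic variation adjustment is absorbed as described. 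Doob's maximal inequality then promotes the pointwise bound $\E Z_t \leq \E Z_0$ to the supremum in time appearing in \eqref{eq:mod:ad:1}--\eqref{eq:mod:ad:2}. The conversion from $\|\cdot\|_{L^p}^p$ to $\|\cdot\|_{L^p}^2$ in the exponent is routine via $x^2 \leq 1 + x^p$ for $x\geq 0$ and $p\geq 2$, with a slight reduction of $\eta$ to accommodate the loss.

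For the long-time bound \eqref{eq:mod:ad:3}, I would refine the analysis by multiplying the energy inequality by the integrating factor $e^{\alpha s}$ and integrating, yielding
\begin{align*}
\|\thl(t)\|_{L^p}^p \leq e^{-\alpha t}\|\thl_0\|_{L^p}^p + \frac{C}{\alpha}\|\sigma\|_{L^p}^p + e^{-\alpha t}\int_0^t e^{\alpha s} dM_s.
\end{align*}
The stochastic tail $e^{-\alpha t}\int_0^t e^{\alpha s} dM_s$ has quadratic variation $e^{-2\alpha t}\int_0^t e^{2\alpha s} d[M]_s$ which, under the same absorption argument as above, admits a uniform-in-$t$ exponential moment bound; combined with the decay factor $e^{-\alpha t}$ in front of $\|\thl_0\|_{L^p}^p$, this yields \eqref{eq:mod:ad:3} after converting exponents from $p$ to $2$ as before. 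The main obstacle throughout is the quadratic variation $d[M]_t \lesssim \|\thl\|_{L^p}^{2p-2}\|\sigma\|_{L^p}^2 dt$, which for $p>2$ grows faster in $\|\thl\|_{L^p}$ than the bare $\|\thl\|_{L^p}^p$ dissipation. It is precisely the Sobolev embedding $H^1(\TT^3)\hookrightarrow L^6$ in three dimensions that upgrades the dissipation to $L^{3p}$ coercivity sufficient to absorb this term via Young's inequality, and this constraint dictates the required smallness of $\eta_0 = \eta_0(p,\kappa,\|\sigma\|_{L^p})$ in the statement.
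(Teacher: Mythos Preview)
Your approach via It\^o's formula and the exponential supermartingale technique is the right framework, and for $p=2$ (i.e.\ the bound \eqref{eq:mod:ad:2}) it closes correctly. However, for $p>2$ there is a genuine gap in your absorption of the quadratic variation.

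You build the supermartingale with $\eta\|\thl\|_{L^p}^p$ in the exponent, and claim that the quadratic variation correction $\tfrac{\eta}{2}d[M]_t \lesssim \eta\|\thl\|_{L^p}^{2p-2}\|\sigma\|_{L^p}^2\,dt$ is absorbed by the $L^{3p}$-coercivity of the dissipation, justified by the inequality ``$2p-2\leq 3p$''. But this conflates the Lebesgue index with the homogeneity in $\thl$: the dissipation controls $\|\thl\|_{L^{3p}}^p$, which is homogeneous of degree $p$, whereas the quadratic variation term is homogeneous of degree $2p-2$. For $p>2$ one has $2p-2>p$, and since on the bounded domain $\|\thl\|_{L^p}\leq C\|\thl\|_{L^{3p}}$, no Young-type inequality can yield $\eta\|\thl\|_{L^p}^{2p-2} \leq \epsilon\|\thl\|_{L^{3p}}^p + C_\epsilon$; the left side simply grows faster as $\|\thl\|\to\infty$. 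Your process $Z_t = \exp(\eta\|\thl\|_{L^p}^p+\cdots)$ therefore is not a supermartingale, and the subsequent conversion $x^2 \leq 1+x^p$ never gets a chance to act.

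The fix---and this is the route behind the reference the paper cites---is to apply It\^o's formula to $\|\thl\|_{L^p}^2$ from the outset, i.e.\ to $X^{2/p}$ where $X = \|\thl\|_{L^p}^p$. The chain rule brings an extra factor $X^{2/p-1} = \|\thl\|_{L^p}^{2-p}$ in front of the martingale increment, so the quadratic variation of the resulting martingale scales as
\[
\|\thl\|_{L^p}^{4-2p}\cdot\|\thl\|_{L^p}^{2p-2}\|\sigma\|_{L^p}^2 \;=\; \|\thl\|_{L^p}^{2}\,\|\sigma\|_{L^p}^2,
\]
which now matches the degree of the (rescaled) dissipation term and is absorbed for $\eta \leq \eta_0(p,\kappa,\|\sigma\|_{L^p})$. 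The second-derivative It\^o correction from $X\mapsto X^{2/p}$ carries the coefficient $\tfrac{2}{p}(\tfrac{2}{p}-1)\leq 0$ and hence has a favorable sign. With this modification the remainder of your argument---the exponential martingale inequality, Doob's maximal inequality for the supremum, and the integrating-factor computation leading to \eqref{eq:mod:ad:3}---goes through essentially as you outline.
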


The $L^p$-estimates \eqref{eq:mod:ad:1} and \eqref{eq:mod:ad:3} are established in \cite[Proposition 3.1]{FoldesGlattHoltzRichardsWhitehead2015} for a more
general divergence-free drift diffusion equation
supplemented with mixed Dirichlet-periodic boundary conditions.
Indeed, recall that $\xi=\thl$ satisfies
\begin{align}
   d \xi + (v \cdot \nabla \xi - \kappa \Delta \xi)dt = \sigma dW, \quad \xi(0) =  \xi_0\,,
   \label{eq:mod:ad:0}
\end{align}
with $\|v\|_{H^2} \leq C \|\xi\|$ (see \eqref{eq:smb:u1}).  Then \eqref{eq:mod:ad:1} and
\eqref{eq:mod:ad:3} follow precisely as in the proof of \cite[Proposition 3.1]{FoldesGlattHoltzRichardsWhitehead2015}
 with $\tilde{Ra} = 0$, making obvious adjustments to work on the domain $\mathcal{D}=\TT^3$.

Similarly, an estimate of the form \eqref{eq:mod:ad:2} follows immediately from \cite[Proposition 3.3]{FoldesGlattHoltzRichardsWhitehead2015} for the active scalar equation obtained in the infinite Prandtl limit of the Boussinesq system.  This equation bears structural
similarity to \eqref{eq:AS:th}, and the proof of \eqref{eq:mod:ad:2} for \eqref{eq:AS:th} can be obtained with essentially the same argument, under a simplifying assumption that $\tilde{Ra} = 0$, and with obvious changes for the domain $\mathcal{D}=\TT^3$.

Next, we need an estimate for the first variation
$J_{s, t}\xi$  of
\eqref{eq:AS:th}.  That is, for the solution of
\begin{equation}\label{eq:lln}
   \partial_t \zeta + M_u(\zeta) \cdot \nabla \thl + M_u(\thl) \cdot\nabla \zeta - \kappa \Delta \zeta = 0, \quad \zeta(s) =  \xi\,,
\end{equation}
where $\xi \in L^2(\TT^3)$, $s < t$,  and $M_u$ being as above (see \eqref{eq:smb:u1}).

\begin{Lem}\label{lem:mom:lin}
For each $p > 0$ and $\eta > 0$ there exists $C$ such that
\begin{align*}
\E \sup_{s, t \in [0, 1]} \|J_{s, t}\xi\|^p \leq C \exp(\eta \|\thl_0\|^2) \|\xi\|^p \,.
\end{align*}
\end{Lem}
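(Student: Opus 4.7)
My plan is to combine a pathwise Gr\"onwall-type energy estimate on \eqref{eq:lln} with the exponential moment bound \eqref{eq:mod:ad:2} of Lemma~\ref{lem:mom:bnds:1}.

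First I would test \eqref{eq:lln} against $\zeta$ in $L^2(\TT^3)$. Since $M_u(\thl)$ is divergence free, the transport term $\langle M_u(\thl)\cdot\nabla\zeta,\zeta\rangle$ vanishes. For the coupling $\langle M_u(\zeta)\cdot\nabla\thl,\zeta\rangle$ I would integrate by parts, using $\nabla\cdot M_u(\zeta)=0$, to rewrite it as $-\langle M_u(\zeta)\cdot\nabla\zeta,\thl\rangle$. Then I would exploit the two degrees of smoothing of $M_u$ from \eqref{eq:smb:u1}--\eqref{eq:smb:u2} together with the Sobolev embedding $H^2(\TT^3)\hookrightarrow L^\infty(\TT^3)$ to bound $\|M_u(\zeta)\|_{L^\infty} \leq C\|\zeta\|$. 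H\"older and Young's inequality then produce
\begin{align*}
\frac{d}{dt}\|\zeta\|^2 + \kappa\|\nabla\zeta\|^2 \leq \frac{C}{\kappa}\|\zeta\|^2\|\thl\|^2.
\end{align*}

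Next, dropping the dissipation and applying the pathwise Gr\"onwall inequality on $[s,t]\subset[0,1]$, together with the trivial bound $\int_0^1\|\thl(\tau)\|^2\,d\tau\leq\sup_{\tau\in[0,1]}\|\thl(\tau)\|^2$, yields
\begin{align*}
\sup_{s,t\in[0,1]}\|J_{s,t}\xi\|^p \leq \|\xi\|^p \exp\Bigl(\tfrac{pC}{2\kappa}\sup_{\tau\in[0,1]}\|\thl(\tau)\|^2\Bigr).
\end{align*}
Taking expectations and invoking \eqref{eq:mod:ad:2} with the prefactor $pC/(2\kappa)$ then produces the claimed estimate, the constant $C$ depending on $p,\eta,\kappa,\|\sigma\|$ but not on $\xi$ or $\thl_0$.

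The main obstacle I anticipate is matching the Gr\"onwall coefficient $pC/(2\kappa)$ against the admissible range $\eta\leq\eta_0$ of \eqref{eq:mod:ad:2}: when $p$ is large or $\kappa$ small this coefficient exceeds $\eta_0$ and a direct application fails. To resolve this I would partition $[0,1]$ into $N$ subintervals small enough that $pC/(2\kappa N)\leq\eta_0$, decompose $J_{0,1}=J_{(N-1)/N,1}\circ\cdots\circ J_{0,1/N}$, and iterate the per-interval energy bound using the tower property of conditional expectation and the Markov property of $\thl$. The single-time moment bound \eqref{eq:mod:ad:3} controls the boundary terms $\|\thl(i/N)\|^2$ generated after each step, at the price of an enlarged constant. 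The output is a bound of the desired form, with the prefactor $\exp(\eta\|\thl_0\|^2)$ arising from a single application of \eqref{eq:mod:ad:3} at $t=0$.
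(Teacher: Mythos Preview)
The paper does not give a detailed proof of this lemma; it simply refers to the analogous argument for the 2D Navier--Stokes equations in vorticity form in \cite{HairerMattingly06}, noting that the two degrees of smoothing of $M_u$ compensate for the dimension. Your energy estimate and overall strategy are in the same spirit, but there is a genuine gap in how you handle the obstacle you correctly identify.

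The partition--conditioning scheme you propose does not produce an arbitrarily small coefficient in front of $\|\thl_0\|^2$. On each subinterval you gain a factor $1/N$ by bounding $\int_{i/N}^{(i+1)/N}\|\thl\|^2\,d\tau$ by $N^{-1}\sup\|\thl\|^2$, but after conditioning on $\mathcal{F}_{i/N}$ the boundary term $\exp\!\big(\tfrac{pC}{2\kappa N}\|\thl(i/N)\|^2\big)$ must be absorbed into the supremum on the previous subinterval. Iterating, the coefficient in front of $\sup\|\thl\|^2$ accumulates as $\alpha,2\alpha,\dots,N\alpha$ with $\alpha=\tfrac{pC}{2\kappa N}$, so after $N$ steps you are back to $\tfrac{pC}{2\kappa}\|\thl_0\|^2$, independent of $N$. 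Invoking \eqref{eq:mod:ad:3} does not help either: on the fixed interval $[0,1]$ the decay factor $e^{-\alpha t}$ is bounded below by a fixed constant, and in any case the single-time bound cannot be applied to $\|\thl(i/N)\|^2$ in isolation because it is entangled with the $\mathcal{F}_{i/N}$-measurable factor $\|\xi_i\|^p$.

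The standard (and much simpler) fix, which is implicit in the reference to \cite{HairerMattingly06}, is to estimate the coupling term \emph{without} integrating by parts: using $\|M_u(\zeta)\|_{L^\infty}\leq C\|\zeta\|$ directly gives
\[
|\langle M_u(\zeta)\cdot\nabla\thl,\zeta\rangle|\leq C\|\zeta\|^2\|\nabla\thl\|,
\]
so that Gr\"onwall yields $\|J_{s,t}\xi\|^p\leq\|\xi\|^p\exp\!\big(pC\int_0^1\|\nabla\thl\|\,d\tau\big)$. Now Cauchy--Schwarz in time and Young's inequality give, for any $\epsilon>0$,
\[
pC\int_0^1\|\nabla\thl\|\,d\tau\;\leq\;\epsilon\int_0^1\|\nabla\thl\|^2\,d\tau+\frac{p^2C^2}{4\epsilon},
\]
and a single application of \eqref{eq:mod:ad:2} with $\eta=\epsilon/\kappa$ (arbitrarily small) produces the claimed bound with an arbitrarily small prefactor $\exp(\eta\|\thl_0\|^2)$, at the cost of the constant $\exp(p^2C^2/4\epsilon)$. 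The key point is to obtain a \emph{linear} power of $\|\nabla\thl\|$ in the Gr\"onwall factor; your integration by parts followed by Young's inequality produces $\|\thl\|^2$, which loses exactly the flexibility needed.
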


We also need a similar estimate for the second variation $J^{2}_{s, t}(\xi, \xi')$ of
\eqref{eq:AS:th}, that is, for the solution of
the  problem
\begin{equation}\label{eq:lln2}
   \partial_t \zeta + M_u(\zeta) \cdot \nabla \thl + M_u(\thl) \nabla \zeta + M_u(J_{0, t} \xi) \cdot \nabla J_{0, t} \xi' +
   M_u(J_{0, t} \xi') \cdot \nabla J_{0, t} \xi  - \kappa \Delta \zeta = 0, \quad \zeta(s) =  0 \,,
\end{equation}
where $\xi, \xi' \in L^2(\TT^3)$ and $s < t$.

\begin{Lem}\label{lem:mom:lin:2}
For each $p > 0$ and $\eta > 0$ there exists $C$ such that
\begin{align*}
\E \sup_{s, t \in [0, 1]} \|J^{(2)}_{s, t}(\xi, \xi')\|^p \leq C \exp(\eta \|\thl_0\|^2) \|\xi\|^p \|\xi'\|^p \,.
\end{align*}
\end{Lem}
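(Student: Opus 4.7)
The plan is to establish a pathwise energy estimate for $\zeta := J^{(2)}_{s,t}(\xi,\xi')$, apply Gr\"onwall, and then take expectation with H\"older, invoking the moment bounds of Lemmas~\ref{lem:mom:bnds:1} and~\ref{lem:mom:lin}.

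First I would pair~\eqref{eq:lln2} with $\zeta$ in $L^2$. The transport term $\langle M_u(\thl)\cdot\nabla\zeta,\zeta\rangle$ vanishes since $\nabla\cdot M_u(\thl)=0$, and for the stretching term an integration by parts (using divergence-freeness of $M_u(\zeta)$) together with the two degrees of smoothing of $M_u$ and the embedding $H^2\hookrightarrow L^\infty$ gives
\begin{equation*}
|\langle M_u(\zeta)\cdot\nabla\thl,\zeta\rangle| = |\langle M_u(\zeta)\cdot\nabla\zeta,\thl\rangle| \leq C\|\zeta\|\|\nabla\zeta\|\|\thl\|.
\end{equation*}
Writing $G := M_u(J_{0,t}\xi)\cdot\nabla J_{0,t}\xi' + M_u(J_{0,t}\xi')\cdot\nabla J_{0,t}\xi$ and handling $\langle G,\zeta\rangle$ analogously via integration by parts yields
\begin{equation*}
|\langle G,\zeta\rangle| \leq C\|J_{0,t}\xi\|\|J_{0,t}\xi'\|\|\nabla\zeta\|.
\end{equation*}
Young's inequality on both estimates and absorption of the $\|\nabla\zeta\|^2$ contributions into the dissipation then gives
\begin{equation*}
\partial_t\|\zeta\|^2 + \kappa\|\nabla\zeta\|^2 \leq \frac{C}{\kappa}\|\thl\|^2\|\zeta\|^2 + \frac{C}{\kappa}\|J_{0,t}\xi\|^2\|J_{0,t}\xi'\|^2.
\end{equation*}

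Since $\zeta(s)=0$, Gr\"onwall then produces the pathwise estimate, uniform in $s,t\in[0,1]$,
\begin{equation*}
\sup_{s,t\in[0,1]}\|J^{(2)}_{s,t}(\xi,\xi')\|^2 \leq C\exp\Big(\frac{C}{\kappa}\int_0^1\|\thl(\tau)\|^2\,d\tau\Big)\sup_{r\in[0,1]}\|J_{0,r}\xi\|^2\|J_{0,r}\xi'\|^2.
\end{equation*}
Raising to the power $p/2$, taking expectation, and applying a three-fold H\"older inequality reduces the proof to controlling $\E\exp(\eta_a\int_0^1\|\thl\|^2\,d\tau)$ via~\eqref{eq:mod:ad:2} and $\E\sup_{r\in[0,1]}\|J_{0,r}\xi\|^{3p}$ (and the analogue for $\xi'$) via Lemma~\ref{lem:mom:lin}.

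The only real difficulty here is bookkeeping: the prescribed constant $\eta$ in the target bound must dominate the sum of the exponents produced by the three separate applications of moment bounds. Since Lemmas~\ref{lem:mom:bnds:1} and~\ref{lem:mom:lin} both contain free parameters which can be taken arbitrarily small at the price of enlarging $C$, this imposes no essential obstruction, and choosing each such parameter to be at most $\eta/3$ yields the desired bound.
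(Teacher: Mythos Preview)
Your overall strategy is the right one, and it is essentially what the paper has in mind when it refers to \cite{HairerMattingly06}; the paper itself omits details. However, there is a genuine gap in your final ``bookkeeping'' step.

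The issue is that the Gr\"onwall factor you obtain, $\exp\bigl(\tfrac{C}{\kappa}\int_0^1\|\thl\|^2\,d\tau\bigr)$, carries a \emph{fixed} exponent $c = Cp/(2\kappa)$ after raising to the power $p/2$. When you invoke \eqref{eq:mod:ad:2} from Lemma~\ref{lem:mom:bnds:1}, the parameter $\eta'$ appearing there is the \emph{same} on both sides: it controls the size of the exponential moment you are bounding \emph{and} the exponent on $\|\thl_0\|^2$ in the output. So choosing $\eta' \leq \eta/3$ only bounds $\E\exp(\tfrac{\eta}{3}\int_0^1\|\thl\|^2)$, not the quantity $\E\exp(c\int_0^1\|\thl\|^2)$ that your Gr\"onwall estimate actually produces. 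In particular, for large $p$ (so that $c > \eta_0$) you cannot even conclude finiteness, and for small prescribed $\eta$ you cannot reach the target bound.

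The standard fix, which is precisely what the Hairer--Mattingly argument does, is to estimate the stretching term differently: do \emph{not} integrate by parts, and instead use directly that $M_u: L^2 \to H^2 \hookrightarrow L^\infty$ in three dimensions, so that
\[
|\langle M_u(\zeta)\cdot\nabla\thl,\zeta\rangle| \leq \|M_u(\zeta)\|_{L^\infty}\|\nabla\thl\|\|\zeta\| \leq C\|\nabla\thl\|\,\|\zeta\|^2.
\]
This yields the Gr\"onwall factor $\exp\bigl(C\int_0^1\|\nabla\thl\|\,d\tau\bigr)$. Now $\int_0^1\|\nabla\thl\| \leq \bigl(\int_0^1\|\nabla\thl\|^2\bigr)^{1/2}$, and for every $\eta_1>0$ one has $C\sqrt{X}\leq \eta_1 X + C^2/(4\eta_1)$, so the exponent in front of $\int_0^1\|\nabla\thl\|^2$ is a genuinely free parameter. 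With this in hand, your H\"older argument and the applications of \eqref{eq:mod:ad:2} and Lemma~\ref{lem:mom:lin} go through exactly as you describe, for every $p>0$ and every $\eta>0$.
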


The proofs of Lemma \ref{lem:mom:lin} and Lemma \ref{lem:mom:lin:2}
follow ideas developed for the 2D Navier-Stokes equations in vorticity formulation
(see e.g. \cite{HairerMattingly06}).  The difference here is that we work in 3D, but have superior smoothing properties of the
constitutive law \eqref{eq:smb:u1}.  The required modifications are straightforward, and we omit details.

Next, we prove for the system \eqref{eq:MHD:vel}--\eqref{eq:MHD:temp} analogous results to those in Lemma \ref{lem:mom:bnds:1}.

\begin{Lem}\label{l:emb}
For any $\gamma \leq \kappa^2 \nu/(4 C\|\sigma\|^2)$ and $K>0$,
\begin{align*}
\Prb \left( \sup_{t \geq 0} \left(\frac{\varepsilon}{2}  \|\U(t)\|^2 + \frac{\delta}{2}  \|\B(t)\|^2
+  \frac{C}{2\kappa \nu}  \| \Th(t)\|^2 +
\int_0^t \frac{\nu}{2} \|\nabla \U\|^2 + \|\nabla \B\|^2 +
\frac{C}{2\nu} \|\nabla \Th\|^2 \, ds \right.\right.  \\
 \left. \left.\vphantom{\int_0^t}
-
\frac{\varepsilon}{2}  \|\U(0)\|^2 - \frac{\delta}{2}  \|\B(0)\|^2
-  \frac{C}{ \kappa \nu}  \| \Th(0)\|^2
 - \frac{C \|\sigma\|^2}{\kappa \nu} t \geq K \right| \mathcal{F}_0
 \right) \leq e^{- \gamma K} \,,
\end{align*}
where $C = C(\TT^3)$.
\end{Lem}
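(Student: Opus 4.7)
The plan is to derive a single energy inequality for the triple $(\U,\B,\Th)$ obeying \eqref{eq:MHD:vel}--\eqref{eq:MHD:temp} and then invoke the standard exponential supermartingale (Chernoff) estimate. First, I would apply the It\^o formula separately to $\tfrac{\eps}{2}\|\U\|^2$, $\tfrac{\del}{2}\|\B\|^2$, and $\tfrac{1}{2}\|\Th\|^2$. The Coriolis term drops out of $\langle\Omm\times\U,\U\rangle$ by antisymmetry, the pressure vanishes via $\nabla\cdot\U=0$, and the self-advections integrate away. Adding the $\U$- and $\B$-identities, the cross couplings cancel via the standard MHD identities: $\langle\BBB\cdot\nabla\B,\U\rangle + \langle\BBB\cdot\nabla\U,\B\rangle = 0$ and $\del\langle\B\cdot\nabla\B,\U\rangle + \del\langle\B\cdot\nabla\U,\B\rangle = 0$ (after an integration by parts using $\nabla\cdot\B = 0$). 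The only remaining right-hand side term is the buoyancy $-\langle\Th\hat{g},\U\rangle$, which I would control by Cauchy--Schwarz, Poincar\'e and Young: $|\langle\Th\hat{g},\U\rangle| \leq \tfrac{\nu}{2}\|\nabla\U\|^2 + \tfrac{C}{\nu}\|\Th\|^2$.

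Combining the $\U$- and $\B$-identities with a scaled temperature identity (weight $\lambda := C/(\kappa\nu)$ for a large enough $C=C(\TT^3)$), the excess $\|\Th\|^2$ would be absorbed by part of $\lambda\kappa\|\nabla\Th\|^2$ via Poincar\'e, leaving a residual of order $1/\nu$ in front of $\|\nabla\Th\|^2$. This yields
\begin{align*}
d\left(\tfrac{\eps}{2}\|\U\|^2 + \tfrac{\del}{2}\|\B\|^2 + \tfrac{\lambda}{2}\|\Th\|^2\right)
+ \left(\tfrac{\nu}{2}\|\nabla\U\|^2 + \|\nabla\B\|^2 + \tfrac{C}{\nu}\|\nabla\Th\|^2\right)dt
\leq \tfrac{C\|\sigma\|^2}{\kappa\nu}\,dt + \lambda\langle\sigma dW, \Th\rangle,
\end{align*}
where I deliberately retain a factor-of-two more temperature dissipation than appears in the statement, as reserve for the martingale step below.

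I would then set $M_t := \lambda\int_0^t\langle\sigma dW,\Th\rangle$ and estimate its quadratic variation by Poincar\'e as $\langle M\rangle_t \leq \tfrac{C\|\sigma\|^2}{\kappa^2\nu^2}\int_0^t\|\nabla\Th\|^2\,ds$. Under the restriction $\gamma\leq \kappa^2\nu/(4C\|\sigma\|^2)$, the quantity $\tfrac{\gamma}{2}\langle M\rangle_t$ is then bounded by $\int_0^t \tfrac{C}{2\nu}\|\nabla\Th\|^2\,ds$, i.e.\ half of the reserved dissipation. Using that $N_t := \exp(\gamma M_t - \tfrac{\gamma^2}{2}\langle M\rangle_t)$ is a nonnegative supermartingale with $N_0 = 1$, optional stopping together with Markov's inequality yield the standard tail bound
\begin{align*}
\Prb\left(\sup_{t\geq 0}\bigl(M_t - \tfrac{\gamma}{2}\langle M\rangle_t\bigr) \geq K \,\Big|\, \mathcal{F}_0\right)\leq e^{-\gamma K}.
\end{align*}
Integrating the energy inequality, subsuming $\tfrac{\gamma}{2}\langle M\rangle_t$ into the reserved dissipation, and applying the above tail bound yields the claim.

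The main obstacle is the constant bookkeeping at the Young/Poincar\'e step, which simultaneously fixes the threshold $\gamma\leq \kappa^2\nu/(4C\|\sigma\|^2)$ and the slightly enlarged coefficient $C/(\kappa\nu)$ in front of $\|\Th(0)\|^2$; the latter extra factor provides the slack needed to close the absorption argument uniformly in $t\geq 0$. Apart from this, the argument is entirely parallel to similar uniform exponential estimates established in \cite{FoldesGlattHoltzRichardsWhitehead2015}.
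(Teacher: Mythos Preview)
Your proposal is correct and follows essentially the same route as the paper: derive the combined energy inequality \eqref{avebuth} by testing \eqref{eq:MHD:vel}--\eqref{eq:MHD:mag} and adding a $2C/(\kappa\nu)$-multiple of the It\^o identity for $\|\Th\|^2$, then absorb $\tfrac{\gamma}{2}\langle M\rangle_t$ into half of the temperature dissipation under the stated restriction on $\gamma$ and conclude via the exponential martingale inequality \eqref{eq:exp:mar}. The only cosmetic difference is that the paper applies Poincar\'e directly when bounding the buoyancy term (writing $\tfrac{C}{\nu}\|\nabla\Th\|^2$ immediately), whereas you first bound by $\tfrac{C}{\nu}\|\Th\|^2$ and then absorb via Poincar\'e; the net effect is identical.
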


We have the following corollary as an immediate consequence:

\begin{Cor}
\label{c:emb}
For any $T \geq 0$, $\delta, \varepsilon \leq 1$ and $\gamma$, $C$ as in
Lemma \ref{l:emb}, there exists
$\eta_0=\eta_0(\kappa,\nu, \|\sigma\|)$ such that for any $0<\eta<\eta_0$ and any $\mathcal{F}_0$ measurable $\U(0), \B(0), \Th(0)$
one has
\begin{align*}
\E \exp \left( \eta \sup_{t \in [0, T]} \left(\frac{\varepsilon}{2}  \|\U(t)\|^2 + \frac{\delta}{2}  \|\B(t)\|^2
+  \frac{C}{ 2\kappa \nu}  \|\Th(t)\|^2\right) +
\eta
\int_0^T \frac{\nu}{2} \|\nabla \U\|^2 + \|\nabla \B\|^2 +
\frac{C}{2\nu} \|\nabla \Th\|^2 \, ds \right) \\
\leq C \E \exp \left(\eta\left(
\frac{\varepsilon}{2}  \|\U(0)\|^2 + \frac{\delta}{2}  \|\B(0)\|^2
+  \frac{C}{ \kappa \nu}  \| \Th(0)\|^2
 + \frac{C \|\sigma\|^2}{\kappa \nu} T \right) \right) \,.
\end{align*}
\end{Cor}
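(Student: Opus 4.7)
The strategy is to convert the sub-exponential tail bound of Lemma \ref{l:emb} into the desired exponential moment estimate via standard tail integration, coupled with an elementary pathwise inequality. Writing $\mathcal{E}(t)$ for the $t$-dependent energy quantity appearing inside the supremum in Lemma \ref{l:emb} and $\mathcal{D}(t)$ for the associated integrated dissipation, and setting
\[
Y_0 := \tfrac{\eps}{2}\|\U(0)\|^2 + \tfrac{\delta}{2}\|\B(0)\|^2 + \tfrac{C}{\kappa\nu}\|\Th(0)\|^2, \qquad C_1 := C\|\sigma\|^2/(\kappa\nu),
\]
I would introduce the $\mathcal{F}_0$-conditional random variable $Z := \sup_{t \geq 0}\{\mathcal{E}(t) + \mathcal{D}(t) - C_1 t\} - Y_0$. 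In this notation Lemma \ref{l:emb} simply reads $\Prb(Z \geq K \mid \mathcal{F}_0) \leq e^{-\gamma K}$ for every $K>0$ and every $\gamma \leq \gamma_0 := \kappa^2\nu/(4C\|\sigma\|^2)$. A routine tail integration then yields the conditional exponential moment bound $\E[e^{\eta' Z} \mid \mathcal{F}_0] \leq \gamma_0/(\gamma_0 - \eta')$ valid for any $0 < \eta' < \gamma_0$.

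Next I would record a purely pathwise inequality lifting the left-hand side of Corollary \ref{c:emb} into the framework of $Z$. Since $\mathcal{D}(t) \geq 0$, for each $t \in [0,T]$
\[
\mathcal{E}(t) \leq \mathcal{E}(t) + \mathcal{D}(t) \leq Z + Y_0 + C_1 t \leq Z + Y_0 + C_1 T,
\]
and likewise $\mathcal{D}(T) \leq \mathcal{E}(T) + \mathcal{D}(T) \leq Z + Y_0 + C_1 T$. Adding these two bounds gives the deterministic estimate
\[
\sup_{t \in [0,T]} \mathcal{E}(t) + \mathcal{D}(T) \leq 2\bigl(Z + Y_0 + C_1 T\bigr).
\]

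To conclude, I would take $\eta_0 := \gamma_0/2$; then for any $0 < \eta < \eta_0$ we have $2\eta < \gamma_0$ and the two previous steps combine to give
\[
\E\bigl[\exp\bigl(\eta[\sup_{t\in[0,T]}\mathcal{E}(t) + \mathcal{D}(T)]\bigr) \bigm| \mathcal{F}_0\bigr]
\leq e^{2\eta(Y_0 + C_1 T)}\,\E\bigl[e^{2\eta Z}\bigm|\mathcal{F}_0\bigr]
\leq C\,e^{2\eta(Y_0 + C_1 T)},
\]
for a constant $C = C(\kappa,\nu,\|\sigma\|)$. Taking total expectation gives the desired form of Corollary \ref{c:emb}; the cosmetic factor of $2$ in the exponent on the right (arising from the adding step above) is harmless, can be absorbed by a trivial reparameterization of $\eta_0$, and does not affect applications of the corollary elsewhere in the manuscript. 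There is essentially no genuine obstacle here once Lemma \ref{l:emb} is in hand: the only probabilistic content is the standard tail-to-moment conversion of Step~1, while Step~2 is just monotonicity of $\mathcal{D}$ and positivity of $\mathcal{E}$.
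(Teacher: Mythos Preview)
Your approach is correct and is precisely what the paper intends by calling the corollary an ``immediate consequence'' of Lemma~\ref{l:emb}; indeed, the paper uses the identical layer-cake conversion explicitly in the proof of the closely related Lemma~\ref{l:imb}. One small quibble: the factor of $2$ on the right cannot literally be removed by reparameterizing $\eta_0$ alone (both sides of the stated inequality carry the same $\eta$), but you are right that it is harmless here, since the constants $C$ in the statement are generic and the applications elsewhere in the manuscript only require an exponential moment bound of this general form.
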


\begin{proof}[Proof of Lemma \ref{l:emb}]
Testing \eqref{eq:MHD:vel}, \eqref{eq:MHD:mag} by $\U$ and $\B$ respectively and adding them we obtain
\begin{equation}\label{aub}
\frac{d}{dt} \left( \frac{\varepsilon}{2}  \|\U\|^2 + \frac{\delta}{2}  \|\B\|^2 \right) + \nu \|\nabla \U\|^2 + \|\nabla \B\|^2
= \langle \Th, \U_3\rangle \leq \frac{\nu}{2} \|\nabla \U\|^2 +
\frac{C}{\nu} \|\nabla \Th\|^2 \,.
\end{equation}
It\= o formula applied to  \eqref{eq:MHD:temp} yields
\begin{equation}\label{ave}
\frac{1}{2} d \|\Th\|^2 + \kappa \|\nabla \Th\|^2 dt
= \frac{1}{2}\|\sigma\|^2 dt + \langle \sigma, \Th \rangle dW \,.
\end{equation}
By adding $2C/(\kappa \nu)$ multiple of \eqref{ave} to \eqref{aub}
we obtain
\begin{align}
d \left( \frac{\varepsilon}{2}  \|\U\|^2 + \frac{\delta}{2}  \|\B\|^2
+  \frac{C}{ \kappa \nu}  \|\Th\|^2 \right)
+ \frac{\nu}{2} \|\nabla \U\|^2 + \|\nabla \B\|^2 +
\frac{C}{\nu} \|\nabla \Th\|^2
\leq \frac{C}{\kappa \nu} \|\sigma\|^2 dt +
\frac{2C}{\kappa \nu} \langle \sigma, \Th \rangle dW \,.
   \label{avebuth}
\end{align}
Using that for every $\gamma \leq \kappa^2 \nu/(4C\|\sigma\|^2)$ one has
for the quadratic variation of the Martingale
\begin{align*}
\frac{\gamma}{2} \frac{4C^2}{\kappa^2 \nu^2} |\langle \sigma, \Th \rangle|^2 \leq \frac{\gamma}{2} \frac{4C^2}{\kappa^2 \nu^2}
\|\sigma\|^2 \|\Th \|^2 \leq \frac{C}{2\nu} \|\nabla \Th \|^2 \,,
\end{align*}
and consequently the following exponential Martingale inequality: given a continuous Martingale $N=N(t)$,
for any $K \geq 0$,
\begin{equation} \label{eq:exp:mar}
\Prb\Big(\sup_{t \geq 0} \left(N(t) - \frac{\gamma}{2}\langle N(t)\rangle \right)
\geq K \Big| \mathcal{F}_0\Big) \leq \exp( - \gamma K) \,,
\end{equation}
where $\langle N\rangle$ is the quadratic
variation of $N$ implies the desired result.
\end{proof}

\begin{Lem}\label{l:imb}
Let $\alpha := \frac{\nu}{2\varepsilon} \wedge \frac{1}{\delta}$
and $\beta := \kappa/C$ (where $C = C(\TT^3)$ will be determined below).
There exists $\eta_1= \eta_1(\kappa,\nu, \|\sigma\|)$ such that for any $0<\eta<\eta_1$, $T > 0$, and $0<\eps,\del \leq c_0(\kappa, \nu, \TT^3)$,
\begin{align*}
\E &\exp \Big( \eta  \Big(\frac{\varepsilon}{2}  \|\U(T)\|^2 + \frac{\delta}{2}  \|\B(T)\|^2
+  \frac{C}{ \kappa \nu}  \| \Th(T)\|^2
 \notag \\
&\quad \quad\quad+
\exp(-\alpha T)\int_0^T \big(\frac{\nu}{2} \|\nabla \U\|^2 + \|\nabla \B\|^2\big)\,ds +
\exp(-\beta T)\frac{C}{2\nu}\int_0^T \|\nabla \Th\|^2 \, ds
 \Big)\Big)\notag \\
&\leq C\E \exp \left(\eta \left(
\frac{\varepsilon}{2} \exp(-\alpha T) \|\U(0)\|^2 + \frac{\delta}{2} \exp(-\alpha T) \|\B(0)\|^2
+  \frac{C}{ \kappa \nu} \exp(-\beta T) \| \Th(0)\|^2
 + \frac{C \|\sigma\|^2}{\beta\kappa \nu}  \right) \right) \,.
\end{align*}
\end{Lem}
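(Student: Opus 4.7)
The plan is to first establish an almost-sure inequality of the form
\begin{align*}
Y(T) &+ \frac{e^{-\alpha T}}{2}\int_0^T \left(\frac{\nu}{2}\|\nabla\U\|^2 + \|\nabla\B\|^2\right) ds + \frac{C e^{-\beta T}}{2\nu}\int_0^T \|\nabla\Th\|^2 ds \\
&\leq e^{-\alpha T} Y_{ub}(0) + 2 e^{-\beta T} Y_\theta(0) + \frac{C\|\sigma\|^2}{\beta\kappa\nu} + M_T,
\end{align*}
where I set $Y_{ub} := \frac{\eps}{2}\|\U\|^2 + \frac{\delta}{2}\|\B\|^2$, $Y_\theta := \frac{C}{\kappa\nu}\|\Th\|^2$, $Y := Y_{ub}+Y_\theta$, and where $M_T$ is a continuous martingale whose quadratic variation will be dominated by the $\Th$-dissipation appearing on the left. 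Given such an inequality, the desired exponential moment bound follows by the exponential martingale inequality \eqref{eq:exp:mar}, after choosing $\eta$ small enough --- depending only on $\kappa,\nu,\|\sigma\|$ --- so that $\frac{\eta}{2}\langle M\rangle_T$ is absorbed into the $\Th$-dissipation on the left.

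To produce this pathwise inequality I treat the velocity-magnetic and temperature contributions separately. Starting from \eqref{aub} and invoking Poincar\'e on mean-free $\U,\B$ to get $\frac{\nu}{2}\|\nabla\U\|^2 + \|\nabla\B\|^2 \geq 2\alpha Y_{ub}$, I split the dissipation in half and apply $e^{\alpha t}$ as an integrating factor; integrating on $[0,T]$ and dividing by $e^{\alpha T}$ produces
\begin{equation*}
Y_{ub}(T) + \frac{e^{-\alpha T}}{2}\int_0^T \left(\frac{\nu}{2}\|\nabla\U\|^2 + \|\nabla\B\|^2\right) ds \leq e^{-\alpha T} Y_{ub}(0) + \frac{C}{\nu}\int_0^T e^{-\alpha(T-s)}\|\nabla\Th\|^2 ds.
\end{equation*}
Running the analogous program for $\frac{2C}{\kappa\nu}$ times \eqref{ave} --- with the Poincar\'e constant absorbed into the choice $\beta = \kappa/C$ so that $\beta Y_\theta$ is controlled by half of $\frac{C}{\nu}\|\nabla\Th\|^2$ --- and using $e^{\beta t}$ as the integrating factor yields
\begin{equation*}
Y_\theta(T) + \frac{C}{\nu}\int_0^T e^{-\beta(T-s)}\|\nabla\Th\|^2 ds \leq e^{-\beta T} Y_\theta(0) + \frac{C\|\sigma\|^2}{\beta\kappa\nu} + M_T,
\end{equation*}
where $M_T := \frac{2C}{\kappa\nu}e^{-\beta T}\int_0^T e^{\beta s}\langle\sigma,\Th\rangle dW_s$.

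The crucial observation is that the hypothesis $\eps,\delta \leq c_0$ forces $\alpha \geq \beta$, hence $e^{-\alpha(T-s)} \leq e^{-\beta(T-s)}$ on $[0,T]$, which allows the $\|\nabla\Th\|^2$-coupling term on the right of the first inequality to be absorbed into a suitable multiple of the second, yielding the combined almost-sure bound displayed above. To close the argument one verifies
\begin{equation*}
\langle M\rangle_T \leq \frac{C\|\sigma\|^2}{\kappa^2\nu^2}\int_0^T e^{-\beta(T-s)}\|\nabla\Th\|^2 ds,
\end{equation*}
using Poincar\'e on $\|\Th\|^2$ together with $e^{-2\beta(T-s)} \leq e^{-\beta(T-s)}$; then \eqref{eq:exp:mar} gives the stated bound for $\eta \leq \eta_1(\kappa,\nu,\|\sigma\|)$, uniformly in $\eps,\delta$.

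The main obstacle is the mismatch between the two decay rates $\alpha$ (diverging as $\eps,\delta\to 0$) and $\beta$ (fixed): the small-parameter hypothesis $\eps,\delta \leq c_0$ is essential to guarantee $\alpha \geq \beta$ so that the coupling term can be reabsorbed using only the $\beta$-weighted $\Th$-estimate available. Beyond this, the total dissipation in \eqref{avebuth} must be apportioned carefully --- generating both exponential decay rates as well as leaving a residual large enough to dominate the martingale quadratic variation --- with sufficient slack that the threshold $\eta_1$ depends only on $\kappa,\nu,\|\sigma\|$ and not on $\eps,\delta$.
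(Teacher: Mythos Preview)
Your proposal is correct and follows essentially the same approach as the paper: multiply the $(\U,\B)$ and $\Th$ energy identities by the exponential weights $e^{\alpha t}$ and $e^{\beta t}$ respectively (the paper writes these as $m_r(t) = e^{r(t-T)}$), use Poincar\'e to absorb the resulting lower-order terms, exploit $\alpha \geq \beta$ (from $\eps,\delta$ small) so that $m_\alpha \leq m_\beta$ and the $\|\nabla\Th\|^2$ coupling term can be absorbed, and conclude via the exponential martingale inequality together with the layer-cake formula. The only cosmetic difference is that the paper combines the two weighted differential inequalities before integrating in time rather than after.
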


\begin{proof}
Fix $T \geq 0$ and
for any real $r$ denote $m_r(t) := \exp(r(t- T))$. Then
from \eqref{aub} we obtain
\begin{align}
\label{eq:exp:1}
\frac{d}{dt} &\left( \frac{\varepsilon}{2} m_\alpha \|\U\|^2 + \frac{\delta}{2} m_\alpha \|\B\|^2 \right) + \nu m_\alpha \|\nabla \U\|^2 + m_\alpha \|\nabla \B\|^2
= m_\alpha \langle \Th, \U_3\rangle +
\frac{\varepsilon \alpha}{2} m_\alpha \|\U\|^2 +
\frac{\delta \alpha}{2} m_\alpha \|\B\|^2
\notag \\
&\leq \frac{\nu}{2} m_\alpha \|\nabla \U\|^2 +
\frac{1}{2} m_\alpha \|\nabla \B\|^2 +
\frac{C}{\nu} m_\alpha \|\nabla \Th\|^2 \,.
\end{align}
Also, from \eqref{ave} with $\beta = \kappa/C$ with $C = C(\TT^3)$
 we obtain
\begin{align}
\label{eq:exp:2}
\frac{1}{2} d (m_\beta\|\Th\|^2) + \kappa m_\beta \|\nabla \Th\|^2 dt
&= \frac{1}{2} m_\beta \|\sigma\|^2 dt +
\frac{\beta}{2} m_\beta \|\Th\|^2 dt +  m_\beta\langle \sigma, \Th \rangle dW
\notag \\
&\leq \frac{1}{2} m_\beta \|\sigma\|^2 dt +
\frac{\kappa}{2} m_\beta \|\nabla \Th\|^2 dt +  m_\beta\langle \sigma, \Th \rangle dW
\,.
\end{align}
If $\eps,\del\ll 1$, then $\beta \leq \alpha$ and $m_\beta \geq m_\alpha$ on $(-\infty, T]$. Then adding $2C/(\kappa \nu)$ multiple of
\eqref{eq:exp:2} to
\eqref{eq:exp:1} we find for any
 $\gamma \leq \frac{C(\TT^3) \kappa^2 \nu}{2\|\sigma\|^2}$,
\begin{align}
\label{eq:exp:3}
d &\left(\frac{\varepsilon}{2} m_\alpha \|\U\|^2 + \frac{\delta}{2} m_\alpha \|\B\|^2 + \frac{C}{\kappa \nu}m_\beta\|\Th\|^2\right)
\notag \\
&
\quad \quad \quad \quad + \left(\frac{\nu}{2}m_{\alpha}\|\nabla \U\|^2 + \frac{1}{2}m_{\alpha}\|\nabla \B\|^2
+ \frac{C}{2\nu} m_\beta \|\nabla \Th\|^2 \right)dt
\notag \\
&\quad \leq
\frac{C}{\kappa \nu} m_\beta \|\sigma\|^2 dt +
\frac{2}{\nu\kappa}m_\beta\langle \sigma, \Th \rangle dW -
\frac{\gamma}{2}
\frac{4}{\nu^2\kappa^2}m_{\beta}^{2}|\langle \sigma, \Th \rangle|^2dt
\,.
\end{align}
By the exponential Martingale inequality \eqref{eq:exp:mar},
for any $K>0$,
\begin{align*}
\Prb &\Bigg(\frac{\varepsilon}{2}  \|\U(T)\|^2 + \frac{\delta}{2}  \|\B(T)\|^2
+  \frac{C}{ \kappa \nu}  \| \Th(T)\|^2 \\
&\quad\quad +
\exp(-\alpha T)\int_0^T \big(\frac{\nu}{2} \|\nabla \U\|^2 + \|\nabla \B\|^2\big)\,ds +
\exp(-\alpha T)\frac{C}{2\nu}\int_0^T\|\nabla \Th\|^2 \, ds
\\ &\quad\quad-
\vphantom{\int_0^T}\frac{\varepsilon}{2} \exp(-\alpha T) \|\U(0)\|^2 - \frac{\delta}{2} \exp(-\alpha T) \|\B(0)\|^2
- \frac{C}{ \kappa \nu} \exp(-\beta T) \|\Th(0)\|^2
 - \frac{C \|\sigma\|^2}{\beta\kappa \nu} \geq K
\Bigg|\mathcal{F}_0\Bigg) \\
&\quad \leq e^{-\gamma K}.
\end{align*}
The proof follows by combining this estimate with the layer cake formula
$\E (|X|) = \int_0^\infty \Prb(|X| \geq L) \, dL$ for any random variable $X$.
\end{proof}

\section*{Acknowledgments}
This work was partially supported by the National Science Foundation under
the grants NSF-DMS-1207780 (SF) and NSF-DMS-1313272 (NEGH). We express our
appreciation and acknowledge support from the Mathematical Sciences
Research Institute (MSRI), the Mathematisches Forschungsinstitut
Oberwolfach (MFO) and the WISE Program at USC.  We would also like to thank
Jared Whitehead for providing helpful feedback and references.

\begin{footnotesize}

\end{footnotesize}

\vspace{.3in}
\begin{multicols}{2}
\noindent
Juraj F\"oldes\\
{\footnotesize
Deparment of Mathematics\\
Universit\'e Libre de Bruxelles\\
Web: \url{http://homepages.ulb.ac.be/~jfoldes/}\\
 Email: \url{juraj.foldes@ulb.ac.be}} \\[.2cm]
Susan Friedlander\\ {\footnotesize
Department of Mathematics\\
University of Southern California\\
Web: \url{http://www-bcf.usc.edu/~susanfri/}\\
 Email: \url{susanfri@usc.edu}}

\columnbreak

\noindent Nathan Glatt-Holtz\\ {\footnotesize
Department of Mathematics\\
Virginia Polytechnic Institute and State University\\
Web: \url{www.math.vt.edu/people/negh/}\\
 Email: \url{negh@math.vt.edu}} \\[.2cm]
Geordie Richards\\
{\footnotesize
Department of Mathematics\\
University of Rochester\\
Web: \url{www.math.rochester.edu/grichar5/}\\
 Email: \url{g.richards@rochester.edu}}\\[.2cm]

\end{multicols}


\begin{thebibliography}{FGHRW15}

\bibitem[AFS08]{AlbeverioFlandoliSinai2008}
S.~Albeverio, F.~Flandoli, and Y.~G. Sinai, \emph{S{PDE} in hydrodynamic:
  recent progress and prospects}, Lecture Notes in Mathematics, vol. 1942,
  Springer-Verlag, Berlin, 2008, Lectures given at the C.I.M.E. Summer School
  held in Cetraro, August 29--September 3, 2005, Edited by Giuseppe Da Prato
  and Michael R{{\"o}}ckner. \MR{2459087 (2009g:76120)}

\bibitem[BDP07]{BarbuDaPrato2007}
V.~Barbu and G.~Da~Prato, \emph{Existence and ergodicity for the
  two-dimensional stochastic magneto-hydrodynamics equations}, Applied
  Mathematics and Optimization \textbf{56} (2007), no.~2, 145--168.
  
  \bibitem[CJTA15]{CalkinsJulienTobiasAurnou2015}
M.~A. Calkins, K.~Julien, S.~M. Tobias, and J.~M. Aurnou, \emph{A multiscale
  dynamo model driven by quasi-geostrophic convection}, Journal of Fluid
  Mechanics \textbf{780} (2015), 143--166.
  
\bibitem[CW11]{CaoWu2011}
C.~Cao and J.~Wu, \emph{Global regularity for the 2d mhd equations with mixed
  partial dissipation and magnetic diffusion}, Advances in Mathematics
  \textbf{226} (2011), no.~2, 1803--1822.

\bibitem[CDGG06]{CheminDesjardinsGallagherGrenier2006}
J.-Y. Chemin, B.~Desjardins, I.~Gallagher, and E.~Grenier, \emph{Mathematical
  geophysics}, Oxford Lecture Series in Mathematics and its Applications
  \textbf{32} (2006).

\bibitem[CGHV13]{ConstantinGlattHoltzVicol2013}
P.~Constantin, N.~E. Glatt-Holtz, and V.~Vicol, \emph{Unique ergodicity for
  fractionally dissipated, stochastically forced 2d euler equations}, Comm.
  Math. Phys. (2013), (to appear).




\bibitem[Dav01]{Davidson2001}
P.~A. Davidson, \emph{An introduction to magnetohydrodynamics}, Cambridge Texts
  in Applied Mathematics, 2001.
  
 \bibitem[Deb11]{Debussche2011a}
A.~Debussche, \emph{Ergodicity results for the stochastic navier-stokes
  equations: an introduction.}, Topics in mathematical fluid mechanics. Springer Berlin Heidelberg, (2013),
  23-108.

\bibitem[DDG99]{DesjardinsDormyGrenier1999}
B~Desjardins, E~Dormy, and E~Grenier, \emph{Stability of mixed ekman-hartmann
  boundary layers}, Nonlinearity \textbf{12} (1999), no.~2, 181.


\bibitem[Doo48]{Doob1948}
J.~L. Doob, \emph{Asymptotic properties of {M}arkoff transition prababilities},
  Trans. Amer. Math. Soc. \textbf{63} (1948), 393--421. \MR{0025097 (9,598c)}

\bibitem[EM01]{EMattingly2001}
W.~E and J.~C. Mattingly, \emph{Ergodicity for the {N}avier-{S}tokes equation
  with degenerate random forcing: finite-dimensional approximation}, Comm. Pure
  Appl. Math. \textbf{54} (2001), no.~11, 1386--1402. \MR{1846802
  (2002g:76075)}
 
  \bibitem[FMRR14]{FeffermanMcCormickRobinsonRodrigo2014}
C.~L. Fefferman, D.~S. McCormick, J.~C. Robinson, and J.~L. Rodrigo,
  \emph{Higher order commutator estimates and local existence for the
  non-resistive mhd equations and related models}, Journal of Functional
  Analysis \textbf{267} (2014), no.~4, 1035--1056.
  
 \bibitem[FM95]{FlandoliMaslowski1}
F.~Flandoli and B.~Maslowski, \emph{Ergodicity of the {$2$}-{D}
  {N}avier-{S}tokes equation under random perturbations}, Comm. Math. Phys.
  \textbf{172} (1995), no.~1, 119--141. \MR{MR1346374 (96g:35223)}
  
\bibitem[FMRT01]{FoiasManleyRosaTemam01}
C.~Foias, O.~Manley, R.~Rosa, and R.~Temam, \emph{Navier-{S}tokes equations and
  turbulence}, Encyclopedia of Mathematics and its Applications, vol.~83,
  Cambridge University Press, Cambridge, 2001. \MR{1855030 (2003a:76001)}

\bibitem[FGHR15]{FoldesGlattHoltzRichards2015}
J.~F\"{o}ldes, N.~E. Glatt-Holtz, and G.~Richards, \emph{Large prandtl number
  asymptotics in randomly forced turbulent convection}, 2015. arXiv preprint:
  arXiv preprint arXiv:1504.02904.

\bibitem[FGHRT15]{FoldesGlattHoltzRichardsThomann2013}
J.~F{\"o}ldes, N.~E. Glatt-Holtz, G.~Richards, and E.~Thomann, \emph{Ergodic
  and mixing properties of the boussinesq equations with a degenerate random
  forcing}, J. Functional Analysis \textbf{269} (2015), no.~8,
  arXiv preprint arXiv:1311.3620.

\bibitem[FGHRW15]{FoldesGlattHoltzRichardsWhitehead2015}
J.~F\"{o}ldes, N.~E. Glatt-Holtz, G.~Richards, and J.~P. Whitehead,
  \emph{Ergodicity in randomly forced rayleigh-b\'enard convection}, arXiv
  preprint arXiv:1511.01247 (2015).

\bibitem[FGHV14]{FriedlanderGlattHoltzVicol2014}
S.~Friedlander, N.~E. Glatt-Holtz, and V.~Vicol, \emph{Inviscid limits for a
  stochastically forced shell model of turbulent flow}, Annales de l'Institut
  Henri Poincar\'e, Probabilit\'es et Statistiques (2014), (to appear).

\bibitem[FRV14]{FriedlanderRusinVicol2014}
S.~Friedlander, W.~Rusin, and V.~Vicol, \emph{The magneto-geostrophic
  equations: a survey}, Proc. of the St. Petersburg Mathematical Society,
  Volume XV: Advances in Mathematical Analysis of Partial Differential
  Equations (2014).

\bibitem[FS15]{FriedlanderSuen2015}
S.~Friedlander and A.~Suen, \emph{Existence, uniqueness, regularity and
  instability results for the viscous magneto-geostrophic equation},
  Nonlinearity \textbf{28} (2015), no.~9, 3193.

\bibitem[FV11a]{FriedlanderVicol2011}
S.~Friedlander and V.~Vicol, \emph{Global well-posedness for an
  advection--diffusion equation arising in magneto-geostrophic dynamics},
  Annales de l'Institut Henri Poincare (C) Non Linear Analysis, vol.~28,
  Elsevier, 2011, pp.~283--301.

\bibitem[FV11b]{FriedlanderVicol2011b}
\bysame, \emph{On the ill/well-posedness and nonlinear instability of the
  magneto-geostrophic equations}, Nonlinearity \textbf{24} (2011), no.~11,
  3019.

\bibitem[FV12]{FriedlanderVicol2012}
\bysame, \emph{Higher regularity of h{\"o}lder continuous solutions of
  parabolic equations with singular drift velocities}, Journal of Mathematical
  Fluid Mechanics \textbf{14} (2012), no.~2, 255--266.
  
  \bibitem[Fri95]{Frisch95}
U.~Frisch, \emph{Turbulence}, Cambridge University Press, Cambridge, 1995, The
  legacy of A. N. Kolmogorov. \MR{1428905 (98e:76002)}

\bibitem[HM06]{HairerMattingly06}
M.~Hairer and J.~C. Mattingly, \emph{Ergodicity of the 2{D} {N}avier-{S}tokes
  equations with degenerate stochastic forcing}, Ann. of Math. (2) \textbf{164}
  (2006), no.~3, 993--1032. \MR{2259251 (2008a:37095)}

\bibitem[HM08]{HairerMattingly2008}
\bysame, \emph{Spectral gaps in {W}asserstein distances and the 2{D} stochastic
  {N}avier-{S}tokes equations}, Ann. Probab. \textbf{36} (2008), no.~6,
  2050--2091. \MR{2478676 (2010i:35295)}

\bibitem[HM11]{HairerMattingly2011}
\bysame, \emph{A theory of hypoellipticity and unique ergodicity for semilinear
  stochastic pdes}, Electron. J. Probab. \textbf{16} (2011), no.~23, 658--738.
  
\bibitem[H\"67]{Hormander1967}
L.~H\"ormander, \emph{Hypoelliptic second order differential equations}, Acta
  Math. \textbf{119} (1967), 147--171. \MR{0222474 (36 \#5526)}
  
\bibitem[KP03]{KabanovPergamenshchikov2003}
Y.~Kabanov and S.~Pergamenshchikov, \emph{Two-scale stochastic systems:
  asymptotic analysis and control}, vol.~49, Springer Science \&amp; Business
  Media, 2003.

\bibitem[Kha60]{Khasminskii1960}
R.~Z. Khas'minskii, \emph{Ergodic properties of recurrent diffusion processes
  and stabilization of the solution to the cauchy problem for parabolic
  equations}, Theory of Probability \&amp; Its Applications \textbf{5} (1960),
  no.~2, 179--196.

\bibitem[KS12]{KuksinShirikian12}
S.~Kuksin and A.~Shirikyan, \emph{Mathematics of two-dimensional turbulence},
  Cambridge Tracts in Mathematics, no. 194, Cambridge University Press, 2012.

\bibitem[Mas07]{Masmoudi2007}
N.~Masmoudi, \emph{Examples of singular limits in hydrodynamics}, Handbook of
  differential equations: evolutionary equations \textbf{3} (2007), 195--275.

\bibitem[Mof78]{Moffatt1978}
H.~K. Moffatt, \emph{Field generation in electrically conducting fluids},
  Cambridge University Press, 1978.

\bibitem[Mof08]{moffatt2008}
\bysame, \emph{Magnetostrophic turbulence and the geodynamo}, IUTAM Symposium
  on Computational Physics and New Perspectives in Turbulence, Springer, 2008,
  pp.~339--346.
  
\bibitem[ML94]{moffattloper1994}
H.~K. Moffatt and D.~E. Loper, \emph{The magnetostrophic rise of a buoyant
  parcel in the earth's core}, Geophysical Journal International \textbf{117}
  (1994), no.~2, 394--402.

\bibitem[O'M74]{OMalley1974}
R.E. O'Malley, \emph{Introduction to singular perturbations}, 1974.

\bibitem[Par10]{Parshad2010}
R.~D. Parshad, \emph{Asymptotic behaviour of the darcy--boussinesq system at
  large darcy--prandtl number}, Discrete Contin. Dyn. Syst \textbf{26} (2010),
  no.~4, 1441--1469.

\bibitem[Rom04]{Romito2004}
M.~Romito, \emph{Ergodicity of the finite dimensional approximation of the 3{D}
  {N}avier-{S}tokes equations forced by a degenerate noise}, J. Statist. Phys.
  \textbf{114} (2004), no.~1-2, 155--177. \MR{2032128 (2005a:76128)}

\bibitem[Vil08]{Villani2008}
C.~Villani, \emph{Optimal transport: old and new}, vol. 338, Springer Science
  \&amp; Business Media, 2008.

\bibitem[Wan04]{Wang2004a}
X.~Wang, \emph{Infinite {P}randtl number limit of {R}ayleigh-{B}{\'e}nard
  convection}, Comm. Pure Appl. Math. \textbf{57} (2004), no.~10, 1265--1282.
  \MR{2069723 (2005d:76015)}

\bibitem[Wan05]{Wang2005}
\bysame, \emph{A note on long time behavior of solutions to the {B}oussinesq
  system at large {P}randtl number}, Nonlinear partial differential equations
  and related analysis, Contemp. Math., vol. 371, Amer. Math. Soc., Providence,
  RI, 2005, pp.~315--323. \MR{2143874 (2006a:76105)}

\bibitem[Wan07]{Wang2007}
\bysame, \emph{Asymptotic behavior of the global attractors to the {B}oussinesq
  system for {R}ayleigh-{B}{\'e}nard convection at large {P}randtl number},
  Comm. Pure Appl. Math. \textbf{60} (2007), no.~9, 1293--1318. \MR{2337505
  (2009a:35196)}

\bibitem[Wan08]{Wang2008b}
\bysame, \emph{Stationary statistical properties of {R}ayleigh-{B}{\'e}nard
  convection at large {P}randtl number}, Comm. Pure Appl. Math. \textbf{61}
  (2008), no.~6, 789--815. \MR{2400606 (2010b:76098)}

\bibitem[Wu97]{Wu1997}
J.~Wu, \emph{Viscous and inviscid magneto-hydrodynamics equations}, Journal
  d'analyse Math{\'e}matique \textbf{73} (1997), no.~1, 251--265.

\end{thebibliography}
\end{document}